\theoremstyle{plain}
\newtheorem{thm}{Theorem}
\newtheorem{lem}[thm]{Lemma}
\newtheorem{prop}[thm]{Proposition}
\newtheorem{cor}[thm]{Corollary}
\theoremstyle{definition}
\newtheorem{dfn}[thm]{Definition}
\newtheorem{ex}[thm]{Example}
\theoremstyle{remark}
\newtheorem{rmk}[thm]{Remark}
\DeclareMathOperator{\End}{End}
\begin{document}

\title{On indecomposable non-simple $\mathbb{N}$-graded vertex algebras}
\author{Phichet Jitjankarn, and Gaywalee Yamskulna}
\address{School of Science, Walailak University, Nakhon Si Thammarat, Thailand}
\email{jitjankarn@gmail.com}
\address{Department of Mathematics\\ Illinois State University, Normal, IL, USA}
\email{gyamsku@ilstu.edu}

\keywords{Vertex Algebras}
\maketitle
\begin{abstract}

\noindent In this paper, we study an impact of Leibniz algebras on the algebraic structure of $\mathbb{N}$-graded vertex algebras. We provide easy ways to characterize indecomposable non-simple $\mathbb{N}$-graded vertex algebras $\oplus_{n=0}^{\infty}V_{(n)}$ such that $\dim V_{(0)}\geq 2$. Also, we examine the algebraic structure of $\mathbb{N}$-graded vertex algebras $V=\oplus_{n=0}^{\infty}V_{(n)}$ such that $\dim~V_{(0)}\geq 2$ and $V_{(1)}$ is a (semi)simple Leibniz algebra that has $sl_2$ as its Levi factor. We show that under suitable conditions this type of vertex algebra is indecomposable but not simple. Along the way we classify vertex algebroids associated with (semi)simple Leibniz algebras that have $sl_2$ as their Levi factor.
\end{abstract}

\section{Introduction}

\noindent The aim of this paper is to explore criteria for $\mathbb{N}$-graded vertex algebras $V=\oplus_{n=0}^{\infty}V_{(n)}$ such that $\dim V_{(0)}\geq 2$ to be indecomposable non-simple vertex algebras, and to study influences of (semi)simple Leibniz algebras on the algebraic structure of this type of vertex algebras. For a $\mathbb{N}$-graded vertex algebra $V=\oplus_{n=0}^{\infty}V_{(n)}$ such that $\dim~V_{(0)}\geq 2$, $V_{(0)}$ is a unital commutative associative algebra, $V_{(1)}$ is a Leibniz algebra. The skew symmetry and the Jacobi identity for the vertex algebra give rise to several compatibility relations. The additional structures on $V_{(0)}\oplus V_{(1)}$ are further summarized in the notion of a vertex algebroid. It is well known that one can construct a $\mathbb{N}$-graded vertex algebra $V=\oplus_{n=0}^{\infty}V_{(n)}$ from any vertex $A$-algebroid such that $V_{(0)}=A$ and the vertex algebroid $V_{(1)}$ is isomorphic to the given one (cf. \cite{GMS}). There are other methods for constructing $\mathbb{N}$-graded vertex algebras. From any 1-truncated conformal algebra one can form a Lie algebra, generalizing the construction of affine Lie algebras. By using this Lie algebra and a result in \cite{DLM2} one can construct a $\mathbb{N}$-graded vertex algebra and its modules (cf. \cite{LiY}). Also, one can construct a $\mathbb{N}$-graded vertex operator algebra by shifting a Virasoro element. Beginning with a vertex operator algebra $(V,Y,{\bf 1},\omega)$, one can replace $\omega$ by another conformal vector $\omega_h:=\omega+h(-2){\bf 1}$ so that $V$ is a $\mathbb{N}$-graded vertex operator algebra with the same Fock space, vacuum vector and set of fields as $V$ (cf. \cite{DoM2}). 

\vspace{0.2cm}

\noindent In \cite{DoM}, Dong and Mason showed that a $\mathbb{N}$-graded vertex operator algebra $V$ is local if and only if $V_{(0)}$ is a local algebra. Moreover, indecomposibility of $V$ is equivalent to $V_{(0)}$ being a  local algebra (cf. \cite{DoM}). Note that in order to prove this statement one needs to have a Virasoro element. In \cite{MaY}, Mason and the second author of this paper studied simple, self-dual $\mathbb{N}$-graded vertex operator algebras that are $C_2$-cofinite, and  proved several results along the lines that the vertex operators in a Levi factor of the Leibniz algebra $V_{(1)}$ generate an affine Kac-Moody vertex operator subalgebra. Also, if $V$ arises as a shift of a self-dual vertex operator algebra of CFT-type then $V_{(0)}$ has a `de Rham structure' with many of the properties of the de Rham cohomology of a complex connected manifold equipped with Poincare duality.

\vspace{0.2cm}

\noindent For this paper, we search for easy ways to characterize indecomposable non-simple $\mathbb{N}$-graded vertex algebras. We establish the following result.

\begin{thm}\label{Main Theorem nonsimple} Let $V=\oplus_{n=0}^{\infty}V_{(n)}$ be a $\mathbb{N}$-graded vertex algebra that satisfies the following properties:

\vspace{0.2cm}

\noindent (a) $2\leq \dim V_{(0)}<\infty$, $1\leq dim V_{(1)}<\infty$, $V$ is generated by $V_{(0)}$ and $V_{(1)}$;

\vspace{0.2cm}

\noindent (b) $V_{(0)}$ is a local algebra.

\vspace{0.2cm}
\noindent Assume that one of the following statements hold.

\vspace{0.2cm} 

\noindent (i) An ideal generated by $rad\langle~,~\rangle$ is not zero;

\vspace{0.2cm}

\noindent (ii) $V_{(0)}$ is not a simple module for a Lie $V_{(0)}$-algebroid $V_{(1)}/Ann_{V_{(1)}}(V_{(0)})$;

\vspace{0.2cm}

\noindent (iii) $V_{(0)}$ is not a simple module for a Lie $V_{(0)}$-algebroid $V_{(1)}/{V_{(0)}}D(V_{(0)})$. 

\vspace{0.2cm}

\noindent Then $V$ is an indecomposable non-simple vertex algebra. 
\end{thm}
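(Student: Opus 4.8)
\noindent We outline the strategy before giving the details.

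\smallskip

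\noindent\emph{The two conclusions will be proved by independent arguments.} Indecomposability comes from hypothesis (b) alone, while each of (i)--(iii) is used only to manufacture a proper nonzero ideal, which yields non-simplicity.

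\smallskip

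\noindent\emph{Indecomposability.} Suppose for contradiction that $V=I_1\oplus I_2$ is a direct sum of nonzero ideals, and write $\mathbf{1}=e_1+e_2$ with $e_j\in I_j$. Since an ideal absorbs every mode $u_n$ and is $D$-stable and $I_1\cap I_2=0$, the familiar absorption argument (for $v\in I_j$, $v=\mathbf{1}_{-1}v=(e_1)_{-1}v+(e_2)_{-1}v$ and the cross term lies in $I_1\cap I_2$) shows that $(e_j)_{-1}$ acts as the identity on $I_j$ and as $0$ on $I_{3-j}$; in particular $(e_1)_{-1}e_1=e_1$ and $(e_2)_{-1}e_2=e_2$. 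Taking degree-$0$ components in these identities shows that the degree-$0$ components of $e_1$ and $e_2$ are idempotents of $V_{(0)}$ summing to $\mathbf{1}$; as $V_{(0)}$ is local they can only be $0$ and $\mathbf{1}$, so after relabelling $e_2\in\bigoplus_{n\geq1}V_{(n)}$. If $e_2\neq0$, with lowest nonzero degree $d\geq1$, then the lowest homogeneous component of $(e_2)_{-1}e_2$ lies in degree $\geq2d>d$, contradicting $(e_2)_{-1}e_2=e_2$. Hence $e_2=0$; but $(e_2)_{-1}$ is the identity on $I_2$, so $I_2=0$, and $V$ is indecomposable. This replaces the Virasoro-dependent argument alluded to after \cite{DoM}.

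\smallskip

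\noindent\emph{Non-simplicity --- the mechanism.} In each case we produce a proper nonzero ideal of $V$. Since $V$ is generated by $V_{(0)}\oplus V_{(1)}$ by (a), the natural setting is the vertex algebroid $\mathcal{A}=V_{(0)}\oplus V_{(1)}$: given an ideal $\mathfrak{a}\subsetneq V_{(0)}$ of the commutative algebra and a subspace $N\subseteq V_{(1)}$ for which the pair $(\mathfrak{a},N)$ is closed under all the vertex-algebroid operations (the $V_{(0)}$-action on $V_{(1)}$, the bracket $u_0v$, the anchor $u_0a$, the pairing $\langle u,v\rangle=u_1v$, and $D$), we claim the graded subspace $\mathcal{I}=\bigoplus_n\mathcal{I}_{(n)}$ it determines --- with $\mathcal{I}_{(0)}=\mathfrak{a}$, $\mathcal{I}_{(1)}=N$, and $\mathcal{I}_{(n)}$ in higher degrees spanned by repeated modes of elements of $V_{(0)}\oplus V_{(1)}$ applied to $\mathfrak{a}\oplus N$ --- is a proper ideal of $V$. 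Verifying this is the real work: one must check $D\mathcal{I}\subseteq\mathcal{I}$ and $u_n\mathcal{I}\subseteq\mathcal{I}$ for all $u,n$ (reducing to $u\in V_{(0)}\oplus V_{(1)}$ by (a)) and, crucially, that no new elements appear in degree $0$, so that $\mathcal{I}_{(0)}$ stays equal to $\mathfrak{a}$ and hence $\mathbf{1}\notin\mathcal{I}$; this propagation relies on the skew-symmetry and Jacobi identities recalled in the introduction.

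\smallskip

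\noindent\emph{The three cases.} Under (ii), a proper nonzero submodule of $V_{(0)}$ over the Lie $V_{(0)}$-algebroid $V_{(1)}/Ann_{V_{(1)}}(V_{(0)})$ is exactly a proper nonzero $V_{(1)}$-invariant ideal $M\subsetneq V_{(0)}$; take $\mathfrak{a}=M$ and let $N$ be the least subspace of $V_{(1)}$ completing $(\mathfrak{a},N)$ to an algebroid ideal. Under (iii) the argument is identical, with $V_{(1)}/{V_{(0)}}D(V_{(0)})$ in place of the previous algebroid. Under (i), the ideal generated by $rad\langle~,~\rangle\subseteq V_{(1)}$ is nonzero by hypothesis, and to see it is proper we compute its degree-$0$ part: using that on the radical the pairing $u_1v$ vanishes while the anchor $u_0a$ is a derivation of $V_{(0)}$, and that by (b) every derivation of the local algebra $V_{(0)}$ maps $V_{(0)}$ into its maximal ideal $\mathfrak{m}$ (if $x$ is nilpotent then $x^{k-1}D(x)=0$ forces $D(x)$ non-invertible, hence in $\mathfrak{m}$, so $D(\mathfrak{m})\subseteq\mathfrak{m}$ and thus $D(V_{(0)})\subseteq\mathfrak{m}$), an induction on degree (via (a)) shows the degree-$0$ part lies in $\mathfrak{m}$, so again $\mathbf{1}$ is not in the ideal. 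In each case $V$ has a proper nonzero ideal and so is not simple; together with the first paragraph, $V$ is indecomposable and non-simple.

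\smallskip

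\noindent\emph{Main obstacle.} The hard part is the claim of the second step: that admissible data $(\mathfrak{a},N)$ on the vertex algebroid generates an honest graded ideal of $V$ whose degree-$0$ part is \emph{exactly} $\mathfrak{a}$, so that properness is visible. The cleanest route is to invoke the functorial construction of an $\mathbb{N}$-graded vertex algebra from a vertex algebroid --- under which ideals generated in degrees $\leq1$ correspond to ideals of the algebroid --- together with the fact that $V$, being generated by $V_{(0)}\oplus V_{(1)}$, is a quotient of the universal such algebra, so one can transport a proper nonzero algebroid ideal. The alternative, a direct induction on degree, forces one to track exactly why nothing new enters degree $0$ at each stage, and that bookkeeping is where we expect the difficulty to concentrate.
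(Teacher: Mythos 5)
Your indecomposability argument is correct and is a legitimate alternative to the paper's: Proposition \ref{Vindecomposable} simply writes $\mathbf{1}=u+w$ with $u\in U\cap V_{(0)}$ and $w\in W\cap V_{(0)}$ proper ideals of $V_{(0)}$, so $u$ is a non-unit and $w=\mathbf{1}-u$ must then be a unit of the local algebra $V_{(0)}$, a contradiction. Your idempotent/lowest-degree argument reaches the same conclusion with more machinery, and has the small virtue of not presupposing that the decomposition is compatible with the grading.

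The non-simplicity half, however, has a genuine gap, and it is exactly the one you flag as the ``main obstacle'': you never prove that the ideal of $V$ generated by the degree $\le 1$ data meets $V_{(0)}$ only in $\mathfrak{a}$ (or in $\{0\}$ for case (i)), so properness is asserted rather than established. Neither of your suggested routes is carried out --- the ``functorial'' route merely relocates the same degree-zero claim to the universal algebra, and the ``direct induction'' is precisely what you omit. That induction is the core of the paper's proof. Concretely, Proposition \ref{spanV} shows $V$ is spanned by vectors $s^1_{-m_1}\cdots s^t_{-m_t}q$ with $s^i\in V_{(1)}$, $q\in V_{(0)}$, $m_i\ge 1$, and one inducts on the length $t$. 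For case (i) (Theorem \ref{rad}) the induction gives $u_nv=0$ for every $u\in rad\langle~,~\rangle$ and $v\in V_{(n)}$: commuting $u_l$ past $s^1_{-m_1}$ contributes only $(u_0s^1)_{l-m_1}$ because $u_1s^1=0$ and $u_0s^1$ stays in $rad\langle~,~\rangle$, while the term $s^1_{-m_1}u_l(\cdots)$ dies by the inductive hypothesis; the base case uses $rad\langle~,~\rangle\subseteq Ann_{V_{(1)}}(V_{(0)})$ (Proposition \ref{radann}), i.e.\ the anchor is actually \emph{zero} on the radical, not merely a derivation into the maximal ideal as in your sketch. Hence $(rad\langle~,~\rangle)\cap V_{(0)}=\{0\}$, and the ideal, nonzero by hypothesis (i), is proper. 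For cases (ii) and (iii) (Theorem \ref{Vsimple}, Proposition \ref{nonsimpleV2}) there is no need to adjoin a subspace $N\subseteq V_{(1)}$: the ideal generated by a proper nonzero submodule $W\subsetneq V_{(0)}$ is $Span\{u_nv~|~u\in W,~v\in V,~n\in\mathbb{Z}\}$, and the same length induction gives $u_{n-1}v\in W$ for $v\in V_{(n)}$ (using $u_0s^1=-s^1_0u\in W$ by $V_{(1)}$-invariance of $W$), so $(W)\cap V_{(0)}=W\ne V_{(0)}$. Until this induction, or an equivalent, is supplied, your non-simplicity argument is incomplete.
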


\noindent Furthermore, we investigate $\mathbb{N}$-graded vertex algebras $V=\oplus_{n=0}^{\infty}V_{(n)}$ such that $\dim~V_{(0)}\geq 2$ and $V_{(1)}$ is a (semi)simple Leibniz algebra that has $sl_2$ as its Levi factor. A number of theorems for Lie algebras were generalized to Leibniz algebras such as Lie's Theorem, Engel's Theorem, Cartan's criterium and Levi's Theorem (cf. \cite{Ba, Ba2, Ba3, O}). 
However, there are many results in Lie Theory that can not be generalized to the Theory of Leibniz algebras. In fact, for (semi)simple Leibniz algebras it is not true that a representation can be decomposed to a direct sum of irreducible ones (cf. \cite{ DMS, FM}). This result motivates us to study an impact of (semi)simple Leibniz algebras on the algebraic structure of $\mathbb{N}$-graded vertex algebras $V=\oplus_{n=0}^{\infty}V_{(n)}$ in this paper. We use (semi)simple Leibniz algebras that have $sl_2$ as their Levi factor to study indecomposability and non-simplicity properties of $\mathbb{N}$-graded vertex algebras. We examine the structure of $\mathbb{N}$-graded vertex algebras $V=\oplus_{n=0}^{\infty}V_{(n)}$ that is generated by $V_{(0)}$ and $V_{(1)}$ such that $\dim~V_{(0)}\geq 2$ and $V_{(1)}$ is a (semi)simple Leibniz algebra that has $sl_2$ as its Levi factor and establish the following result. 

\begin{thm}\label{MainTheorem} Let $V=\oplus_{n=0}^{\infty}V_{(n)}$ be a $\mathbb{N}$-graded vertex algebra that satisfies the following properties:

\vspace{0.2cm}

\noindent (a) $2\leq \dim V_{(0)}<\infty$, $1\leq dim V_{(1)}<\infty$, $V$ is generated by $V_{(0)}$ and $V_{(1)}$;

\vspace{0.2cm}

\noindent (b) $V_{(0)}$ is not a trivial module of a Leibniz algebra $V_{(1)}$, $u_0u\neq 0$ for some $u\in V_{(1)}$;

\vspace{0.2cm}

\noindent (c) the Levi factor of $V_{(1)}$ equals $Span\{e,f,h\}$, $e_0f=h$, $h_0e=2e$, $h_0f=-2f$ and $e_1f=k{\bf 1}$. Here, $k\in\mathbb{C}\backslash \{0\}$.

\vspace{0.2cm}

\noindent Assume that one of the following statements hold.

\vspace{0.2cm} 

\noindent (i) $V_{(1)}$ is a simple Leibniz algebra;

\vspace{0.2cm}

\noindent (ii) $V_{(1)}$ is a semisimple Leibniz algebra and $Ker(D)\cap V_{(0)}=\{a\in V_{(0)}~|~b_0a=0\text{ for all } b\in V_{(1)}\}$. Here, $D$ is a linear operator on $V$ such that $D(v)=v_{-2}{\bf 1}$ for $v\in V$.

\vspace{0.2cm}

\noindent Then $V$ is indecomposable but not a simple vertex algebra. 
\end{thm}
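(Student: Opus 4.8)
The plan is to deduce everything from Theorem~\ref{Main Theorem nonsimple}. Hypothesis~(a) of that theorem is hypothesis~(a) here, so it suffices to show that $V_{(0)}$ is a local algebra (hypothesis~(b) there) and that one of its alternatives holds. For the latter I will verify alternative~(ii): once $V_{(0)}$ is local with maximal ideal $\mathfrak m$, we have $V_{(0)}/\mathfrak m\cong\mathbb C$ and $\mathfrak m\neq 0$ since $\dim V_{(0)}\geq 2$; each operator $u_{0}$ ($u\in V_{(1)}$) is a derivation of the commutative algebra $V_{(0)}$, hence induces the zero derivation on $V_{(0)}/\mathfrak m\cong\mathbb C$, so $u_{0}(V_{(0)})\subseteq\mathfrak m$. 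Thus $\mathfrak m$ is a proper nonzero ideal of $V_{(0)}$ stable under all $u_{0}$, i.e.\ a proper nonzero submodule of $V_{(0)}$ over the Lie $V_{(0)}$-algebroid $V_{(1)}/Ann_{V_{(1)}}(V_{(0)})$; hence $V_{(0)}$ is not a simple module over it, alternative~(ii) holds, and Theorem~\ref{Main Theorem nonsimple} shows $V$ is indecomposable and not simple.

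So the real task is to prove that $V_{(0)}$ is local, and this is where hypothesis~(c) enters. Assume not; as $V_{(0)}$ is finite-dimensional, commutative, associative and unital, it then contains an idempotent $\epsilon\notin\{0,\mathbf 1\}$. Since derivations annihilate idempotents, $u_{0}\epsilon=0$ for all $u\in V_{(1)}$; by skew symmetry (all higher $D$-terms vanishing for degree reasons) this gives $\epsilon_{0}v=-v_{0}\epsilon=0$ for every $v\in V_{(1)}$, and $\epsilon_{0}=0$ on $V_{(0)}$ for trivial degree reasons. Feeding $\epsilon_{0}=0$ into the associativity formula for $(\epsilon_{-1}\epsilon)_{-1}=\epsilon_{-1}$ acting on $V_{(1)}$ shows that $\epsilon_{-1}$ is an idempotent operator on $V_{(1)}$. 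Differentiating $\epsilon=\epsilon_{-1}\epsilon$ and using that $\epsilon_{0}=0$ on $V_{(1)}$ yields $D(\epsilon)=2\,\epsilon_{-1}D(\epsilon)$; applying the idempotent $\epsilon_{-1}$ to both sides forces $\epsilon_{-1}D(\epsilon)=0$ and hence $D(\epsilon)=0$.

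Now set $A_{1}=\epsilon_{-1}V_{(1)}$ and $A_{2}=(\mathbf 1-\epsilon)_{-1}V_{(1)}$, so that $V_{(1)}=A_{1}\oplus A_{2}$. Using $[u_{0},\epsilon_{-1}]=(u_{0}\epsilon)_{-1}=0$, the identity $\epsilon_{-2}c=(D\epsilon)_{-1}c=0$ for $c\in V_{(0)}$, and the commutator and associativity formulae, one checks that $A_{1}$ and $A_{2}$ are two-sided ideals of the Leibniz algebra $(V_{(1)},{}_{0})$ with $[A_{1},A_{2}]=0$. The Levi factor $S=Span\{e,f,h\}\cong sl_{2}$ is simple; if its images under the two projections $V_{(1)}\twoheadrightarrow A_{i}$ were both nonzero they would span a $6$-dimensional semisimple subalgebra of $V_{(1)}$, contradicting the hypothesis that the Levi factor is exactly $3$-dimensional. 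Hence $S$ lies in one summand, say $S\subseteq A_{1}$, so $\epsilon_{-1}f=f$; since $[e_{1},\epsilon_{-1}]=(e_{0}\epsilon)_{0}+(e_{1}\epsilon)_{-1}=0$ we obtain
\[
k\epsilon=\epsilon_{-1}(k\mathbf 1)=\epsilon_{-1}(e_{1}f)=e_{1}(\epsilon_{-1}f)=e_{1}f=k\mathbf 1,
\]
and $k\neq 0$ forces $\epsilon=\mathbf 1$, a contradiction. (If instead $S\subseteq A_{2}$, then $\epsilon_{-1}f=0$ and the same computation gives $k\epsilon=0$, so $\epsilon=0$, again impossible.) Therefore $V_{(0)}$ is local, completing the proof.

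The main obstacle is the Leibniz-theoretic splitting in the last paragraph: establishing carefully --- with the correct normalizations in the skew symmetry, commutator and associativity identities --- that $A_{1}$ and $A_{2}$ really are two-sided ideals, and that the three-dimensional Levi factor cannot be distributed across both of them. This is precisely where the structural hypotheses on $V_{(1)}$ (that it is (semi)simple with $sl_{2}$ Levi factor, together with the $Ker(D)\cap V_{(0)}$ condition governing the classification of the vertex algebroids attached to such Leibniz algebras) are used; hypothesis~(b) does not intervene in the two reductions above but guarantees that the statement is non-vacuous.
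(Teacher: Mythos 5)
Your proposal is correct, but it takes a genuinely different route from the paper's. The paper obtains Theorem \ref{MainTheorem} by citing the full classification of the associated vertex algebroids: Theorems \ref{Bsimple} and \ref{Bsemisimple} compute all products explicitly under the hypotheses, yielding both that $V_{(0)}$ is local and (via Remark \ref{BsimpleVnotsimple} and Corollary \ref{BsemisimpleVnotsimple}) that $A_{\neq 0}$, resp.\ each $N^j$, is a proper nonzero submodule of $V_{(0)}$ over the Lie $V_{(0)}$-algebroid $V_{(1)}/Ann_{V_{(1)}}(V_{(0)})$, after which Theorem \ref{Main Theorem nonsimple} applies. You bypass the classification entirely: the Peirce decomposition $V_{(1)}=\epsilon_{-1}V_{(1)}\oplus(\mathbf 1-\epsilon)_{-1}V_{(1)}$ attached to a hypothetical nontrivial idempotent $\epsilon$, the observation that the $3$-dimensional quotient $V_{(1)}/rad(V_{(1)})$ cannot contain $sl_2\oplus sl_2$, and the identity $k\epsilon=\epsilon_{-1}(e_1f)=e_1(\epsilon_{-1}f)$ force $\epsilon\in\{0,\mathbf 1\}$; and once $V_{(0)}$ is local, its maximal ideal $\mathfrak m$ is automatically a proper nonzero algebroid submodule, giving alternative (ii) of Theorem \ref{Main Theorem nonsimple}. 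I verified the steps you left as ``one checks'': $A_1,A_2$ are two-sided Leibniz ideals with $[A_1,A_2]=0$ (use $(a\cdot u)_0b=a*(u_0b)$ together with $(A_1)_1A_2=0$ and commutativity to get $[A_1,A_2]\subseteq A_1\cap A_2$); a semisimple Lie subalgebra meets $rad(V_{(1)})$ trivially and so embeds in the $3$-dimensional quotient; and $u_0$ preserves $\mathfrak m$ because in characteristic $0$ derivations preserve the nilradical, which equals $\mathfrak m$ here. Your approach buys brevity and in fact a sharper theorem: only hypotheses (a) and (c) are used, so neither (b), nor the simple/semisimple dichotomy, nor the $Ker(D)\cap V_{(0)}$ condition is actually needed for the conclusion (your closing commentary overstates where these enter). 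What the paper's route buys is the explicit algebroid structure---the dimension of $V_{(0)}$ and all structure constants---which is of independent interest beyond this theorem.
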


\vspace{0.2cm}

\noindent This paper is organized as follows: in Section 2, we review some necessary background on Leibniz algebras. In Section 3, we recall definitions of a 1-truncated conformal algebra, a vertex algebroid, and their basic properties. We examine relations among these algebraic structures. We use properties of Leibniz algebras to study the algebraic structure of vertex algebroids. Moreover, we classify vertex algebroids associated with (semi)simple Leibniz algebras that have $sl_2$ as their Levi factor. In Section 4, we study criteria for $\mathbb{N}$-graded vertex algebras to be indecomposable non-simple vertex algebras, and prove Theorem \ref{Main Theorem nonsimple}. In addition, we investigate the structure of $\mathbb{N}$-graded vertex algebras $V=\oplus_{n=0}^{\infty}V_{(n)}$ that is generated by $V_{(0)}$ and $V_{(1)}$ such that $\dim~V_{(0)}\geq 2$ and $V_{(1)}$ is a (semi)simple Leibniz algebra that has $sl_2$ as its Levi factor and prove Theorem \ref{MainTheorem}.


\section{Leibniz Algebras}

\noindent In this section, we provide necessary background on Leibniz algebras. We give a definition of left (respective, right) Leibniz algebras, define solvable left Leibniz algebras, state the analogue of Levi's Theorem, and discuss about simple and semisimple Leibniz algebras.

\begin{dfn}\cite{DMS, FM}\ \ 

\vspace{0.2cm}

\noindent (i) A {\em left Leibniz algebra} $\mathfrak{L}$ is a $\mathbb{C}$-vector space equipped with a bilinear map $[~,~]:\mathfrak{L}\times\mathfrak{L}\rightarrow\mathfrak{L}$ satisfying the Leibniz identity $$[a,[b,c]]=[[a,b],c]+[b,[a,c]]$$ for all $a,b,c\in\mathfrak{L}$.

\vspace{0.2cm}

\noindent (ii) A {\em right Leibniz algebra} $\mathbb{L}$ is a $\mathbb{C}$-vector space equipped with a bilinear map $\{~,~\}:\mathbb{L}\times \mathbb{L}\rightarrow\mathbb{L}$ satisfying the identity $$\{\{u,v\},w\}=\{\{u,w\},v\}+\{u,\{v,w\}\}$$ for all $u,v,w\in \mathbb{L}$. 
\end{dfn}

\begin{rmk}\ \  

\vspace{0.2cm}

\noindent (i) Let $(\mathfrak{L},[~,~])$ be a left Leibniz algebra. If we set $\{u,v\}:=-[v,u]$ for all $u,v\in\mathfrak{L}$, then $(\mathfrak{L},\{\cdot,\cdot\})$ is a right Leibniz algebra.

\vspace{0.2cm}

\noindent (ii) Let $(\mathbb{L}, \{~,~\})$ be a right Leibniz algebra. If we set $[a,b]:=-\{b,a\}$ for all $a,b\in\mathbb{L}$, then $(\mathbb{L},[~,~])$ is a left Leibniz algebra.
\end{rmk}

\begin{rmk} Let $\mathfrak{L}$ be a left Leibniz algebra. We have $[[a,a],b]=0$ for all $a,b\in\mathfrak{L}$ since $[[x,y],z]=[x,[y,z]]-[y,[x,z]]$ for all $x,y,z\in\mathfrak{L}$. 
\end{rmk}

\begin{ex} Every Lie algebra is a left Leibniz algebra and a right Leibniz algebra.
\end{ex}

\begin{ex} Let $G$ be a Lie algebra and let $M$ be a skew-symmetric $G$-module (i.e., $[m,g]=0$ for all $g\in G$, $m\in M$). Then the vector space $Q=G\oplus M$ equipped with the multiplication $[u+m,v+n]=[u,v]+u\cdot n$ is a left Leibniz algebra. Here, $u,v\in G$, $m,n\in M$. 
\end{ex}

\vspace{0.2cm}

\noindent In this paper, we only focus on left Leibniz algebras. Also, throughout this paper, a Leibniz algebra always refer to a left Leibniz algebra.

\begin{dfn}\cite{DMS} Let $\mathfrak{L}$ be a left Leibniz algebra over $\mathbb{C}$. Let $I$ be a subspace of $\mathfrak{L}$. $I$ is a {\em left} (respectively, {\em right}) {\em ideal} of $\mathfrak{L}$ if $[\mathfrak{L}, I]\subseteq I$ (respectively, $[I,\mathfrak{L}]\subseteq I$). $I$ is an {\em ideal} of $\mathfrak{L}$ if it is both a left and a right ideal. 
\end{dfn}

\begin{ex} We define $$Leib(\mathfrak{L})=Span\{~[u,u]~|~u\in\mathfrak{L}~\}.$$ Clearly, $Leib(\mathfrak{L})=Span\{[u,v]+[v,u]~|~u,v\in\mathfrak{L}\}$. Since  
$[a,[b,b]]=[[a,b],b]+[b,[a,b]]$, and $[[b,b],u]=0$ for all $a,b,u\in\mathfrak{L}$, we can conclude that $Leib(\mathfrak{L})$ is an ideal of $\mathfrak{L}$. Moreover, for $v,w\in Leib(\mathfrak{L})$, $[v,w]=0$. 
\end{ex}

\begin{dfn}\cite{DMS} Let $(\mathfrak{L}, [~,~])$ be a left Leibniz algebra. The series of ideals $$...\subseteq \mathfrak{L}^{(2)}\subseteq \mathfrak{L}^{(1)}\subseteq \mathfrak{L}$$ where $\mathfrak{L}^{(1)}=[\mathfrak{L},\mathfrak{L}]$, $\mathfrak{L}^{(i+1)}=[\mathfrak{L}^{(i)},\mathfrak{L}^{(i)}]$ is called the {\em derived series} of $\mathfrak{L}$.  A left Leibniz algebra $\mathfrak{L}$ is {\em solvable} if $\mathfrak{L}^{(m)}=0$ for some integer $m\geq 0$. As in the case of Lie algebras, any left Leibniz algebra $\mathfrak{L}$ contains a unique maximal solvable ideal $rad(\mathfrak{L})$ called the  the {\em radical} of $\mathfrak{L}$ which contains all solvable ideals.
\end{dfn}

\begin{ex} $Leib(\mathfrak{L})$ is a solvable ideal.
\end{ex}

\begin{dfn}\cite{DMS}\ \  

\vspace{0.2cm}

\noindent (i) A left Leibniz algebra $\mathfrak{L}$ is {\em simple}  if $[\mathfrak{L},\mathfrak{L}]\neq Leib(\mathfrak{L})$, and $\{0\}$, $Leib(\mathfrak{L})$, $\mathfrak{L}$ are the only ideals of $\mathfrak{L}$.

\vspace{0.2cm}

\noindent (ii) A left Leibniz algebra $\mathfrak{L}$ is said to be {\em semisimple} if $rad(\mathfrak{L})=Leib(\mathfrak{L})$. 
\end{dfn}

\begin{rmk}\cite{DMS} 

\vspace{0.2cm}

\noindent The Leibniz algebra $\mathfrak{L}$ is semisimple if and only if the Lie algebra $\mathfrak{L}/Leib(\mathfrak{L})$ is semisimple. However, if $\mathfrak{L}/Leib(\mathfrak{L})$ is a simple Lie algebra then $\mathfrak{L}$ is not necessarily a simple Leibniz algebra. Also, $\mathfrak{L}$ is not necessary a direct sum of simple Leibniz ideals when $\mathfrak{L}/Leib(\mathfrak{L})$ is a semisimple Lie algebra . 
\end{rmk}

\begin{ex} \cite{DMS}

\noindent For a positive integer $m$, we set $V_m$ to be an irreducible $sl_2$-module of dimension $m$. Then $sl_2\oplus V_m$ is a simple Leibniz algebra with $Leib(sl_2\oplus V_m)=V_m$. 

\vspace{0.2cm}

\noindent Next, we set $U=V_m\oplus V_n$. A vector space $\mathfrak{L}=sl_2\oplus U$ is a Leibniz algebra with $Leib(\mathfrak{L})=U$. Clearly, $U$, $V_m$, $V_n$ are all different ideals of $\mathfrak{L}$. Hence, $\mathfrak{L}$ is not a simple Leibniz algebra although $\mathfrak{L}/Leib(\mathfrak{L})$ is a simple Lie algebra. Furthermore, observe that $\mathfrak{L}$ can not be written as a direct sum of simple Leibniz ideals.\end{ex}

\begin{thm}\cite{Ba2, DMS} Let $\mathfrak{L}$ be a left Leibniz algebra. 

\vspace{0.2cm}

\noindent (i) There exists a subalgebra $S$ which is a semisimple Lie algebra of $\mathfrak{L}$ such that $\mathfrak{L}=S \dot{+} rad(\mathfrak{L})$. As in the case of a Lie algebra, we call $S$ a Levi subalgebra or a Levi factor of $\mathfrak{L}$.

\vspace{0.2cm}

\noindent (ii) If $\mathfrak{L}$ is a semisimple Leibniz algebra then $\mathfrak{L}=(S_1\oplus S_2\oplus...\oplus S_k)\dot{+}Leib(\mathfrak{L})$, where $S_j$ is a simple Lie algebra for all $1\leq j\leq k$. Moreover, $[\mathfrak{L},\mathfrak{L}]=\mathfrak{L}$. 

\vspace{0.2cm}

\noindent (iii) If $\mathfrak{L}$ is a simple Leibniz algebra, then there exists a simple Lie algebra $S$ such that $Leib(\mathfrak{L})$ is an irreducible module over $S$ and $\mathfrak{L}=S\dot{+}Leib(\mathfrak{L})$.
\end{thm}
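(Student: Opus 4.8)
The plan is to reduce the statement to the classical Levi decomposition of the Lie algebra $\overline{\mathfrak L}:=\mathfrak L/Leib(\mathfrak L)$, isolating the one genuinely Leibniz-theoretic ingredient, a Whitehead-type vanishing. A fact I would use repeatedly is that $[Leib(\mathfrak L),\mathfrak L]=0$: this is immediate from $[[a,a],b]=0$ together with $Leib(\mathfrak L)=Span\{[u,u]~|~u\in\mathfrak L\}$, and it shows that $Leib(\mathfrak L)$ is an abelian --- hence solvable --- ideal, so $Leib(\mathfrak L)\subseteq rad(\mathfrak L)$. For part (i), the quotient $\overline{\mathfrak L}$ is then a Lie algebra with $rad(\overline{\mathfrak L})=rad(\mathfrak L)/Leib(\mathfrak L)$ and with semisimple quotient $\overline{\mathfrak L}/rad(\overline{\mathfrak L})\cong\mathfrak L/rad(\mathfrak L)$. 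By the classical Levi theorem, $\overline{\mathfrak L}=\overline S\,\dot{+}\,rad(\overline{\mathfrak L})$ for a semisimple Lie subalgebra $\overline S$. Letting $\pi:\mathfrak L\to\overline{\mathfrak L}$ be the projection and $\mathfrak m:=\pi^{-1}(\overline S)$, one gets a Leibniz subalgebra containing $K:=Leib(\mathfrak L)$ with $\mathfrak m/K\cong\overline S$. Since $[K,\mathfrak m]\subseteq[K,\mathfrak L]=0$ and $[K,K]=0$, the ideal $K$ is abelian with trivial right action and the left $\mathfrak m$-action on $K$ factors through $\overline S$, so the problem is reduced to splitting the abelian Leibniz extension $0\to K\to\mathfrak m\to\overline S\to 0$ by a Leibniz subalgebra.

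This splitting is the crux, and the only place semisimplicity of $\overline S$ is used beyond the classical Levi theorem: the obstruction to it lies in a second (Leibniz) cohomology group $HL^2(\overline S,K)$, which vanishes because $\overline S$ is a semisimple Lie algebra and $K$ is finite-dimensional --- the Leibniz analogue of Whitehead's lemma (cf. \cite{Ba2, DMS}). Concretely I would choose a linear section $\sigma\colon\overline S\to\mathfrak m$, record the cochain $f(x,y)=[\sigma(x),\sigma(y)]-\sigma([x,y])\in K$, and use Weyl complete reducibility of the $\overline S$-action on the relevant cochain space to adjust $\sigma$ so that $f\equiv 0$, i.e.\ so that $\sigma$ becomes a Leibniz homomorphism; then $S:=\sigma(\overline S)\cong\overline S$ is a semisimple Lie subalgebra and $\mathfrak m=S\,\dot{+}\,K$. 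Now $rad(\mathfrak L)\cap\mathfrak m$ is a solvable ideal of $\mathfrak m$, hence contained in $rad(\mathfrak m)=K$ (as $\mathfrak m/K$ is semisimple), so $S\cap rad(\mathfrak L)\subseteq S\cap K=0$; comparing dimensions, $\dim S=\dim\overline S=\dim\mathfrak L-\dim rad(\mathfrak L)$, gives $\mathfrak L=S\,\dot{+}\,rad(\mathfrak L)$, proving (i). For part (ii), semisimplicity means $rad(\mathfrak L)=Leib(\mathfrak L)$, so (i) gives $\mathfrak L=S\,\dot{+}\,Leib(\mathfrak L)$ with $S$ semisimple Lie; decomposing $S=S_1\oplus\cdots\oplus S_k$ into simple Lie ideals is classical, and since $Leib(\mathfrak L)=Span\{[u,u]\}\subseteq[\mathfrak L,\mathfrak L]$ trivially while $S=[S,S]\subseteq[\mathfrak L,\mathfrak L]$, we obtain $[\mathfrak L,\mathfrak L]\supseteq S+Leib(\mathfrak L)=\mathfrak L$.

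For part (iii), I would first dispose of the case $Leib(\mathfrak L)=0$ (then $\mathfrak L$ is a Lie algebra whose only ideals are $0$ and $\mathfrak L$, i.e.\ a simple Lie algebra, and the claim holds with $S=\mathfrak L$), so assume $Leib(\mathfrak L)\neq 0$. The ideal $rad(\mathfrak L)$ lies in $\{0,Leib(\mathfrak L),\mathfrak L\}$; it is not $\mathfrak L$ (else $\mathfrak L$ is solvable, forcing the ideal $[\mathfrak L,\mathfrak L]$ to be proper and hence in $\{0,Leib(\mathfrak L)\}$, contradicting $[\mathfrak L,\mathfrak L]\neq Leib(\mathfrak L)$) and not $0$ (it contains $Leib(\mathfrak L)\neq 0$), so $rad(\mathfrak L)=Leib(\mathfrak L)$ and $\mathfrak L$ is semisimple Leibniz. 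By (ii), $\mathfrak L=S\,\dot{+}\,K$ with $K:=Leib(\mathfrak L)$, $S=S_1\oplus\cdots\oplus S_k$ semisimple Lie, and $K$ an $S$-module via $[S,K]\subseteq K$. Using $[K,\mathfrak L]=0$, for a subspace $W\subseteq K$ one has $[\mathfrak L,W]=[S,W]$ and $[W,\mathfrak L]=0$, so $W$ is an ideal of $\mathfrak L$ if and only if it is an $S$-submodule of $K$; as the only ideals of $\mathfrak L$ are $0,K,\mathfrak L$ and those inside $K$ are just $0$ and $K$, the module $K$ is irreducible over $S$. Finally, if $k\geq 2$ then $J:=(S_2\oplus\cdots\oplus S_k)+K$ is an ideal of $\mathfrak L$ --- one checks $[\mathfrak L,J]\subseteq J$ and $[J,\mathfrak L]\subseteq J$ from $[S_i,S_j]=\delta_{ij}S_i$, $[S_i,K]\subseteq K$ and $[K,\mathfrak L]=0$ --- with $0\neq J$ and $J\notin\{K,\mathfrak L\}$, contradicting simplicity; hence $k=1$ and $S$ is a simple Lie algebra. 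The one substantial obstacle is the Whitehead-type vanishing invoked in the second paragraph; granting it, all three parts follow by combining classical Lie theory with the elementary identity $[Leib(\mathfrak L),\mathfrak L]=0$.
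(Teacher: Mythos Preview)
The paper does not supply a proof of this theorem; it is stated as a cited result from \cite{Ba2, DMS} and used as background. So there is no ``paper's own proof'' to compare against.

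That said, your argument is correct and is essentially the standard route (close to Barnes' in \cite{Ba2}): pass to the Lie quotient $\overline{\mathfrak L}=\mathfrak L/Leib(\mathfrak L)$, apply the classical Levi theorem there, pull back the Levi factor to a subalgebra $\mathfrak m$ sitting in an abelian Leibniz extension $0\to Leib(\mathfrak L)\to\mathfrak m\to\overline S\to 0$, and split it using the Leibniz analogue of Whitehead's second lemma. You correctly isolate this vanishing $HL^2(\overline S,K)=0$ as the one nontrivial input; the rest of your deductions for (ii) and (iii) --- the decomposition of $S$ into simple ideals, the identification of $S$-submodules of $Leib(\mathfrak L)$ with ideals of $\mathfrak L$, and the exclusion of $k\geq 2$ via the ideal $(S_2\oplus\cdots\oplus S_k)+Leib(\mathfrak L)$ --- are clean and accurate. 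Two small remarks: your argument tacitly assumes $\mathfrak L$ is finite-dimensional (needed both for classical Levi and for the Whitehead-type vanishing), which is the intended setting here; and in (iii), the edge case $Leib(\mathfrak L)=0$ renders the phrase ``$Leib(\mathfrak L)$ is an irreducible module'' vacuous --- this is a wrinkle in the statement itself, not in your proof.
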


\begin{dfn} Let $\mathfrak{L}$ be a left Leibniz algebra. A left $\mathfrak{L}$-module is a vector space $M$ equipped with a $\mathbb{C}$-bilinear map $\mathfrak{L}\times M\rightarrow M; (u,m)\mapsto u\cdot m$ such that $$([u,v])\cdot m=u\cdot (v\cdot m)-v\cdot(u\cdot m)$$ for all $u,v\in \mathfrak{L},m\in M$. 

\vspace{0.2cm}

\noindent The usual definitions of the notions of submodule, irreducibility, complete reducibility, homomorphism, isomorphism, etc., hold for left Leibniz modules.
\end{dfn}

\begin{rmk} $Leib(\mathfrak{L})$ acts as zero on $M$. 
\end{rmk}
\begin{ex}\ \ 

\vspace{0.2cm}

\noindent (i) Every left Leibniz algebra is a left module over itself via the Leibniz multiplication. 

\vspace{0.2cm}

\noindent (ii) $\mathbb{C}$ is a left $\mathfrak{L}$-module via $u\cdot w=0$ for every $u\in \mathfrak{L},w\in\mathbb{C}$. This module is called the trivial left module of $\mathfrak{L}$.

\vspace{0.2cm}

\noindent (iii) Let $\mathfrak{g}$ be a Lie algebra and let $M$ be a $\mathfrak{g}$-module. Hence, $\mathfrak{L}=\mathfrak{g}\oplus M$ is a Leibniz algebra. Let $N$ be a $\mathfrak{g}$-module. Then $N$ is in fact a left $\mathfrak{L}$-module via $(g+m)\cdot u=g\cdot u$ for every $g\in\mathfrak{g}, m\in M$, $u\in N$. 
\end{ex}


\section{1-Truncated Conformal Algebras and Vertex Algebroids}

\noindent First, we recall definitions of a 1-truncated conformal algebra, and a vertex algebroid. We review their basic properties. Also, we discuss relationships among these algebraic objects. 

\vspace{0.2cm}

\noindent Next, we use properties of Leibniz algebras to study ideals and modules of vertex algebroids. In particular, we classify vertex algebroids associated with (semi)simple Leibniz algebras that have $sl_2$ as their Levi factor. 

\subsection{1-Truncated Conformal Algebras}
\begin{dfn}\cite{GMS} A {\em 1-truncated conformal algebra} is a graded vector space $C=C_0\oplus C_1$ equipped with a linear map $\partial:C_0\rightarrow C_1$ and bilinear operations $(u,v)\mapsto u_iv$ for $i=0,1$ of degree $-i-1$ on $C=C_0\oplus C_1$ such that the following axioms hold:

\medskip

\noindent(Derivation) for $a\in C_0$, $u\in C_1$,
\begin{equation}
(\partial a)_0=0,\ \ (\partial a)_1=-a_0,\ \ \partial(u_0a)=u_0\partial a;
\end{equation}

\noindent(Commutativity) for $a\in C_0$, $u,v\in C_1$,
\begin{equation} 
u_0a=-a_0u,\ \ u_0v=-v_0u+\partial(u_1v),\ \ u_1v=v_1u;
\end{equation}

\noindent(Associativity) for $\alpha,\beta,\gamma\in C_0\oplus C_1$,
\begin{equation}
\alpha_0\beta_i\gamma=\beta_i\alpha_0\gamma+(\alpha_0\beta)_i\gamma.
\end{equation}
\end{dfn}

\begin{prop}\label{C0C1}\cite{GMS, LiY} Let $C=C_0\oplus C_1$ be a graded vector space equipped with a linear map $\partial$ from $C_0$ to $C_1$ and equipped with  bilinear maps $(u,v)\mapsto u_iv$ of degree $-i-1$ on $C=C_0\oplus C_1$ for $i=0,1$. Then $C$ is a 1-truncated conformal algebra if and only if

\vspace{0.2cm}

\noindent (i) The pair $(C_1,[~,~])$ carries the structure of a Leibniz algebra where $[u,v]=u_0v$ for $u,v\in C_1$, and the space $C_0$ is a $C_1$-module with $u\cdot a=u_0a$ for $u\in C_1$, $a\in C_0$.

\vspace{0.2cm}

\noindent (ii) The map $\partial$ is a $C_1$-module homomorphism, and the subspace $\partial C_0$ annihilates the $C_1$-module $C_0\oplus C_1$.

\vspace{0.2cm}

\noindent (iii) The bilinear map $\langle~,~\rangle$ from $C_1\otimes C_1$ to $C_0$ defined by $\langle u,v\rangle =u_1v$ for $u,v\in C_1$ is a $C_1$-module homomorphism. Furthermore, 
$$u_0a=-a_0u,~\langle \partial a,u\rangle=-a_0u,~[u,v]+[v,u]=\partial \langle u,v\rangle,~\langle u,v\rangle=\langle v,u\rangle$$
for $a\in C_0$, $u,v\in C_1$.
\end{prop}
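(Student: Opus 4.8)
The plan is to prove both implications by directly translating each of the three defining axioms (Derivation, Commutativity, Associativity) into the module-theoretic language of (i)--(iii), using the grading throughout. First I would record the elementary degree bookkeeping: since $u_iv$ has degree $-i-1$ and $C$ is concentrated in degrees $0$ and $1$, the product $u_iv$ of homogeneous elements lies in $C_{|u|+|v|-i-1}$, so $a_0b=a_1b=0$ for $a,b\in C_0$, $a_1u=u_1a=0$ for $a\in C_0$, $u\in C_1$, while $u_1v\in C_0$, $u_0v\in C_1$, and $u_0a,\,a_0u\in C_0$ for $u,v\in C_1$, $a\in C_0$. Thus the only nonvanishing instances of the products are precisely those encoded by $[~,~]$, $\langle~,~\rangle$, the $C_1$-actions $u\cdot a=u_0a$ on $C_0$ and $u\cdot v=u_0v$ on $C_1$, and the auxiliary operation $(a,u)\mapsto a_0u$. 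This keeps the ensuing case analysis finite.

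For the forward direction I would check that the Commutativity axiom is exactly the conjunction of the three relations $u_0a=-a_0u$, $[u,v]+[v,u]=\partial\langle u,v\rangle$ and $\langle u,v\rangle=\langle v,u\rangle$ of (iii); that the Derivation axiom yields that $\partial$ is a $C_1$-module map $C_0\to C_1$ (from $\partial(u_0a)=u_0\partial a$), that $\partial C_0$ annihilates $C_0\oplus C_1$ (from $(\partial a)_0=0$), and the last relation $\langle\partial a,u\rangle=-a_0u$ of (iii) (from $(\partial a)_1=-a_0$); and that the Associativity axiom, evaluated on homogeneous triples, reduces to the Leibniz identity for $(C_1,[~,~])$ and the $C_1$-module axiom for $C_0$ (the cases with all three arguments in $C_1$, respectively the first two in $C_1$ and the third in $C_0$, at $i=0$), together with the identity $u_0(v_1w)=(u_0v)_1w+v_1(u_0w)$ expressing that $\langle~,~\rangle\colon C_1\otimes C_1\to C_0$ is a $C_1$-module homomorphism for the tensor-product action on $C_1\otimes C_1$ (all three arguments in $C_1$, at $i=1$). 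Every remaining homogeneous instance of Associativity either vanishes for degree reasons or, after rewriting each product $a_0w$ or $w_0a$ by means of $u_0a=-a_0u$, coincides with one already treated.

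For the converse I would run the same dictionary backwards: (iii) supplies Commutativity and the clause $\langle\partial a,u\rangle=-a_0u$, (ii) supplies $\partial(u_0a)=u_0\partial a$ and $(\partial a)_0=0$, i.e. the rest of Derivation, and (i) together with the module-homomorphism clause of (iii) supplies the three ``essential'' cases of Associativity identified above. The one place needing a little care is the verification of the mixed cases of Associativity in this direction -- those with exactly one argument in $C_0$: there I would first invoke $u_0a=-a_0u$ (already available, being part of (iii)) to convert every product $a_0w$, $w_0a$ into the $C_1$-actions, and then note that the resulting identity is precisely the $C_1$-module axiom for $C_0$ given by (i). Triples with two or more arguments in $C_0$ are disposed of by the degree vanishings from the first step.

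I expect no genuine obstacle here: the proposition is a repackaging statement whose proof is a short finite case-check. The only points to be vigilant about are not to invoke a relation before it has been established (in particular, not to use Commutativity while checking Associativity in the converse direction, which is why one proves the Commutativity clauses from (iii) at the outset), and to keep track of which products are forced to be zero by the grading so that the case analysis remains small.
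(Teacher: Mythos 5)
Your proposal is correct: the degree bookkeeping, the identification of the Commutativity and Derivation axioms with the displayed relations in (iii) and with the module-homomorphism/annihilation clauses in (ii), and the reduction of all mixed instances of Associativity to the Leibniz identity, the $C_1$-module axiom for $C_0$, and the invariance of $\langle~,~\rangle$ all check out, including the one delicate point you flag (rewriting $a_0w$ and $w_0a$ via $u_0a=-a_0u$ before invoking the module axiom). Note that the paper does not prove this proposition but cites it to \cite{GMS, LiY}; your finite case-check is exactly the standard dictionary argument given there, so there is nothing to compare beyond saying your write-up would serve as a complete proof.
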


\begin{prop} Let $(\mathfrak{g},\{~,~\})$ be a Lie algebra equipped with a symmetric invariant bilinear form $\langle~,~\rangle:\mathfrak{g}\otimes \mathfrak{g}\rightarrow\mathbb{C}$. Let $M$ and $A_M$ be $\mathfrak{g}$-modules equipped with a $\mathfrak{g}$-module isomorphism $\phi$ from $A_M$ to $M$. We set 
$$C_0=\mathbb{C}\oplus A_M,\text{ and }C_1=\mathfrak{g}\oplus M.$$ We define a Leibniz bracket on $C_1$ in the following way: for $g,g'\in \mathfrak{g},m,m'\in M$,
$$[g+m,g'+m']=\{g,g'\}+g\cdot m'.$$ Clearly, $C_0$ is an $C_1$-module under the following action: for $g\in\mathfrak{g},m\in M, \alpha\in\mathbb{C}, a\in A_M$,
\begin{equation}(g+m)\cdot (\alpha+a)=g\cdot a.\end{equation} 
Now, we let $\partial:C_0\rightarrow C_1$ be a linear map such that $\partial(\alpha)=0$ and $\partial(a)=\phi(a)$ for all $\alpha\in \mathbb{C}$, $a\in A_M$. Next, we define a bilinear map $(v,w)\mapsto v_iw$ of degree $-i-1$ on $C=C_0\oplus C_1$ in the following way: for $g,g'\in \mathfrak{g}$, $\lambda,\beta\in C_1$, $a,a'\in C_0$,
\begin{eqnarray*}
&&\lambda_0\beta=[\lambda,\beta], ~\lambda_0a=\lambda\cdot a,~a_0\lambda=-\lambda\cdot a,~a_0a'=0,\\
&&g_1g'=\langle g,g'\rangle,~g_1\partial(a)=g\cdot a,~\partial(a)_1g=g\cdot a, ~\partial(a)_1\partial(a')=0,\\
&&a_1a'=0,~a_1\lambda=0,~\lambda_1a=0.
\end{eqnarray*}
Then $C_0\oplus C_1$ is a 1-truncated conformal algebra.
\end{prop}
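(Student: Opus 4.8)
The plan is to verify the three conditions of Proposition~\ref{C0C1} rather than the axioms of the definition directly, since that proposition already absorbs the commutativity and associativity relations. I would first record that all operations $(v,w)\mapsto v_iw$ are, by construction, bilinear of degree $-i-1$ with values in the stated spaces, and that $\partial C_0=\phi(A_M)=M$, so $C_1=\mathfrak{g}\oplus\partial C_0$; hence in every check below it suffices to take the arguments of $C_1$ lying in $\mathfrak{g}$ or in $M=\partial(A_M)$.

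For condition (i), the bracket $[g+m,g'+m']=\{g,g'\}+g\cdot m'$ is exactly the Leibniz bracket on $\mathfrak{g}\oplus M$ obtained by regarding $M$ as a skew-symmetric $\mathfrak{g}$-module (the construction recalled in Section~2), so $(C_1,[~,~])$ is a left Leibniz algebra by the Jacobi identity for $\mathfrak{g}$ and the module axiom for $M$; and writing $\lambda=g+m$, $\mu=g'+m'$, $x=\alpha+a$ one finds $[\lambda,\mu]\cdot x=\{g,g'\}\cdot a$ while $\lambda\cdot(\mu\cdot x)-\mu\cdot(\lambda\cdot x)=g\cdot(g'\cdot a)-g'\cdot(g\cdot a)$, so the $C_1$-module identity for $C_0$ is just the $\mathfrak{g}$-module axiom for $A_M$. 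For condition (ii), for $\lambda=g+m$ and $x=\alpha+a$ one has $\partial(\lambda\cdot x)=\phi(g\cdot a)$ and $\lambda\cdot\partial x=g\cdot\phi(a)$, which coincide because $\phi$ is a $\mathfrak{g}$-module map, so $\partial$ is a $C_1$-module homomorphism; and since $\partial C_0=M$ and every element of $M\subseteq C_1$ has zero $\mathfrak{g}$-component, it acts by $0$ on $C_0$ (as $m\cdot(\alpha+a)=0$) and on $C_1$ (as $[m,g'+m']=0$), so $\partial C_0$ annihilates the $C_1$-module $C_0\oplus C_1$.

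Condition (iii) carries the bulk of the bookkeeping. I would verify $\lambda\cdot\langle u,v\rangle=\langle[\lambda,u],v\rangle+\langle u,[\lambda,v]\rangle$ by splitting on whether $\lambda,u,v$ lie in $\mathfrak{g}$ or in $M=\partial C_0$. When $\lambda\in M$ both sides vanish, since $[\lambda,-]=0$ and $\lambda\cdot\langle u,v\rangle=0$; when all of $\lambda,u,v$ lie in $\mathfrak{g}$ the identity is exactly the invariance of the form on $\mathfrak{g}$; the mixed cases with one argument in $M$ reduce, via $g_1\partial(a)=g\cdot a$ and $[g,\partial(a)]=\partial(g\cdot a)$, to the $\mathfrak{g}$-module axiom for $A_M$; and cases with two or more arguments in $M$ are $0$ on both sides. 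The four displayed identities $u_0a=-a_0u$, $\langle\partial a,u\rangle=-a_0u$, $[u,v]+[v,u]=\partial\langle u,v\rangle$ and $\langle u,v\rangle=\langle v,u\rangle$ then follow from the definitions by the same split into $\mathfrak{g}$ and $M$, using the symmetry of the form and $\phi(g\cdot a)=g\cdot\phi(a)$. With (i)--(iii) in hand, Proposition~\ref{C0C1} yields that $C_0\oplus C_1$ is a 1-truncated conformal algebra.

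The main obstacle is purely organizational: once every argument is reduced to $\mathfrak{g}$ or to $\partial C_0=M$, the only identity with genuine content is the module-homomorphism property of $\langle~,~\rangle$ restricted to $\mathfrak{g}$, which is nothing but the invariance of the given bilinear form; the isomorphism $\phi$ is used only to make $\partial$ compatible with the two module structures, and in fact a $\mathfrak{g}$-module homomorphism $A_M\to M$ would already suffice for the verification.
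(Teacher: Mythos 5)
Your proposal is correct and follows essentially the same route as the paper: both reduce the claim to the three conditions of Proposition~\ref{C0C1}, observe that $\partial C_0=M$ annihilates $C_0\oplus C_1$ and that $\partial$ intertwines the module structures, and then carry out the one substantive computation, namely that $\langle~,~\rangle$ is a $C_1$-module homomorphism, which the paper does in a single calculation with general elements $u+\partial(q)$, $g+\partial(a)$, $g'+\partial(a')$ and you do by a case split --- the content in both cases being the invariance of the form on $\mathfrak{g}$ together with the module axiom. One small caveat on your closing remark: a mere $\mathfrak{g}$-module homomorphism $\phi$ would not quite suffice as the proposition is phrased, since the operations $g_1\partial(a)=g\cdot a$ and $\partial(a)_1\partial(a')=0$ are parametrized through $\partial$ and need $\phi$ to be bijective to be well defined on all of $M$.
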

\begin{proof} Observe that $\partial(C_0)\subseteq M$ and $\partial(C_0)$ annihilates $C_0\oplus C_1$. Moreover, $\partial$ is a $C_1$-module homomorphism. For $\lambda,\beta\in C_1$, we have $\lambda_1\beta=\beta_1\lambda$, and $\lambda_0\beta=-\beta_0\lambda+\partial(\lambda_1\beta)$.
Since 
\begin{eqnarray*}
&&((u+\partial(q))\cdot (g+\partial(a)))_1 (g'+\partial(a'))+(g+\partial(a))_1 ((u+\partial(q))\cdot (g'+\partial(a')))\\
&&=([u,g]+\partial(u\cdot a))_1 (g'+\partial(a'))+(g+\partial(a))_1 ([u,g']+\partial(u\cdot a'))\\
&&=u\cdot (g\cdot a')+u\cdot (g'\cdot a)\\
&&=(u+\partial(q))\cdot (g+\partial(a))_1(g'+\partial(a'))
\end{eqnarray*}
for all $u,g,g'\in\mathfrak{g}$, $q,a,a'\in A$, we can conclude that $C_1\otimes C_1\rightarrow C_0: b\otimes b'\mapsto b_1b'$ is a $C_1$-module homomorphism. By Proposition \ref{C0C1}, $C_0\oplus C_1$ is a 1-truncated conformal algebra. 
\end{proof}

\begin{cor} Let $\mathfrak{g}$ be a Lie algebra over $\mathbb{C}$ equipped with a symmetric invariant bilinear form $\langle~,~\rangle$. Let $\partial$ be the zero map from $\mathbb{C}$ to $\mathfrak{g}$. Then $C=\mathbb{C}\oplus \mathfrak{g}$ is a 1-truncated conformal algebra.
\end{cor}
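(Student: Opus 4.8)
The plan is to deduce this directly from the preceding Proposition by specializing to the case $M=A_M=\{0\}$. First I would take the given Lie algebra $\mathfrak{g}$ with its symmetric invariant form $\langle~,~\rangle$, set $M=A_M=\{0\}$ (the zero $\mathfrak{g}$-module), and let $\phi\colon A_M\to M$ be the (zero) $\mathfrak{g}$-module isomorphism. Then $C_0=\mathbb{C}\oplus A_M=\mathbb{C}$ and $C_1=\mathfrak{g}\oplus M=\mathfrak{g}$; the Leibniz bracket $[g+m,g'+m']=\{g,g'\}+g\cdot m'$ reduces to the Lie bracket on $\mathfrak{g}$, the action of $C_1$ on $C_0$ becomes trivial, $\partial$ becomes the zero map $\mathbb{C}\to\mathfrak{g}$, and the products $v_iw$ restrict to $g_1g'=\langle g,g'\rangle$ and $g_0g'=\{g,g'\}$ with all other products equal to zero. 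By the Proposition, $C=\mathbb{C}\oplus\mathfrak{g}$ equipped with these operations is a 1-truncated conformal algebra, which is the assertion of the corollary.

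Alternatively, one can verify the three conditions of Proposition \ref{C0C1} directly, and I would include this as a short remark. The pair $(C_1,[~,~])=(\mathfrak{g},\{~,~\})$ is a Lie algebra, hence a Leibniz algebra, and $C_0=\mathbb{C}$ is the trivial $\mathfrak{g}$-module, so (i) holds. The map $\partial=0$ is trivially a $C_1$-module homomorphism and $\partial C_0=\{0\}$ annihilates $C_0\oplus C_1$, so (ii) holds. For (iii), the bilinear map $\langle~,~\rangle\colon\mathfrak{g}\otimes\mathfrak{g}\to\mathbb{C}$ has to be a $C_1$-module homomorphism; since $C_0$ is the trivial module this is exactly the identity $\langle\{u,v\},w\rangle+\langle v,\{u,w\}\rangle=0$, i.e. the invariance of the form. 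The remaining relations in (iii) are immediate: $u_0a=0=-a_0u$, $\langle\partial a,u\rangle=0=-a_0u$, $[u,v]+[v,u]=0=\partial\langle u,v\rangle$ by skew-symmetry of the Lie bracket, and $\langle u,v\rangle=\langle v,u\rangle$ by symmetry of the form.

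There is essentially no obstacle here: the corollary is the degenerate case $M=0$ of the Proposition. The only place where the hypotheses on $\mathfrak{g}$ and $\langle~,~\rangle$ are actually used is the module-homomorphism property of $\langle~,~\rangle$ in condition (iii), and it follows at once from invariance precisely because $C_0=\mathbb{C}$ carries the trivial module structure, so the right-hand side of that condition vanishes identically. I would therefore present the specialization argument as the proof and relegate the direct check to a parenthetical remark.
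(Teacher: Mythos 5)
Your proof is correct and matches the paper's intent: the corollary is stated there without proof precisely because it is the specialization $M=A_M=\{0\}$ of the preceding Proposition, which is exactly the argument you give. The direct verification via Proposition \ref{C0C1} that you include as a remark is also accurate, with the module-homomorphism condition on $\langle~,~\rangle$ reducing to invariance since $\mathbb{C}$ carries the trivial action.
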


\subsection{Vertex Algebroids}
\begin{dfn}\cite{Br1,Br2, GMS} Let $(A,*)$ be a unital commutative associative algebra over $\mathbb{C}$ with the identity $1$. A {\em vertex $A$-algebroid} is a $\mathbb{C}$-vector space $\Gamma$ equipped with 
\begin{enumerate}
\item a $\mathbb{C}$-bilinear map $A\times \Gamma\rightarrow \Gamma, \ \ (a,v)\mapsto a\cdot v$ such that $1\cdot v=v$ (i.e. a nonassociative unital $A$-module),
\item a structure of a Leibniz $\mathbb{C}$-algebra $[~,~]:\Gamma\times \Gamma\rightarrow\Gamma$, 
\item a homomorphism of Leibniz $\mathbb{C}$-algebra $\pi:\Gamma\rightarrow Der(A)$,
\item a symmetric $\mathbb{C}$-bilinear pairing $\langle ~,~\rangle:\Gamma\otimes_{\mathbb{C}}\Gamma\rightarrow A$,
\item a $\mathbb{C}$-linear map $\partial :A\rightarrow \Gamma$ such that $\pi\circ \partial =0$ which satisfying the following conditions:
\begin{eqnarray*}
&&a\cdot (a'\cdot v)-(a*a')\cdot v=\pi(v)(a)\cdot \partial(a')+\pi(v)(a')\cdot \partial(a),\\
&&[u,a\cdot v]=\pi(u)(a)\cdot v+a\cdot [u,v],\\
&&[u,v]+[v,u]=\partial(\langle u,v\rangle),\\
&&\pi(a\cdot v)=a\pi(v),\\
&&\langle a\cdot u,v\rangle=a*\langle u,v\rangle-\pi(u)(\pi(v)(a)),\\
&&\pi(v)(\langle v_1,v_2\rangle)=\langle [v,v_1],v_2\rangle+\langle v_1,[v,v_2]\rangle,\\
&&\partial(a*a')=a\cdot \partial(a')+a'\cdot\partial(a),\\
&&[v,\partial(a)]=\partial(\pi(v)(a)),\\
&&\langle v,\partial(a)\rangle=\pi(v)(a)
\end{eqnarray*}
for $a,a'\in A$, $u,v,v_1,v_2\in\Gamma$.
\end{enumerate}
\end{dfn}

\begin{prop}\cite{LiY} Let $(A,*)$ be a unital commutative associative algebra and let $B$ be a module for $A$ as a nonassociative algebra . Then a vertex $A$-algebroid structure on $B$ exactly amounts to a 1-truncated conformal algebra structure on $C=A\oplus B$ with 
\begin{eqnarray*}
&&a_ia'=0,\\
&&u_0v=[u,v],~u_1v=\langle u,v\rangle,\\
&&u_0a=\pi(u)(a),~ a_0u=-u_0a
\end{eqnarray*} for $a,a'\in A$, $u,v\in B$, $i=0,1$ such that 
\begin{eqnarray*}
&&a\cdot(a'\cdot u)-(a*a')\cdot u=(u_0a)\cdot \partial a'+(u_0a')\cdot \partial a,\\
&&u_0(a\cdot v)-a\cdot (u_0v)=(u_0a)\cdot v,\\
&&u_0(a*a')=a*(u_0a')+(u_0a)*a',\\
&&a_0(a'\cdot v)=a'*(a_0v),\\
&&(a\cdot u)_1v=a*(u_1v)-u_0v_0a,\\
&&\partial(a*a')=a\cdot \partial(a')+a'\cdot \partial(a).
\end{eqnarray*}
\end{prop}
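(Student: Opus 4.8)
The statement is an ``if and only if'' that matches two packages of data on the graded space $C=A\oplus B$, so the plan is to prove each implication by reducing everything to Proposition~\ref{C0C1}, which already repackages the $1$-truncated conformal algebra axioms into Leibniz- and module-theoretic language. Throughout, the translation dictionary is the one displayed in the statement: on $C_1=B$ put $[u,v]=u_0v$ and $\langle u,v\rangle=u_1v$; let $B$ act on $C_0=A$ by $\pi(u)(a)=u_0a$, with $a_0u=-u_0a$; take $a_ia'=0$; and keep $\partial\colon A\to B$ the same on both sides (the products $u_1a$, $a_1u$, $a_1a'$ are then forced to vanish by degree reasons). The data fixed once and for all---the commutative associative product $*$ on $A$ and the nonassociative $A$-module action $(a,v)\mapsto a\cdot v$ on $B$---are external to the $1$-truncated conformal structure, and the six displayed relations are exactly what couples them to it.

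For the forward implication, assume a vertex $A$-algebroid structure on $B$ and define the products on $C=A\oplus B$ by the dictionary. To see $C$ is a $1$-truncated conformal algebra I would check the three hypotheses of Proposition~\ref{C0C1}: (i) $(B,[~,~])$ is Leibniz by assumption, and $A$ is a $B$-module because $\pi$ is a homomorphism of Leibniz algebras, i.e.\ $\pi([u,v])=\pi(u)\pi(v)-\pi(v)\pi(u)$ as operators on $A$; (ii) $\partial$ is a $B$-module map because $[v,\partial a]=\partial(\pi(v)(a))$, while $\pi\circ\partial=0$, $\langle v,\partial a\rangle=\pi(v)(a)$ and $[u,v]+[v,u]=\partial\langle u,v\rangle$ together force $\partial A$ to annihilate the module $A\oplus B$; (iii) $\langle~,~\rangle$ is a $B$-module map and the four scalar identities $u_0a=-a_0u$, $\langle\partial a,u\rangle=-a_0u$, $[u,v]+[v,u]=\partial\langle u,v\rangle$, $\langle u,v\rangle=\langle v,u\rangle$ are immediate from the vertex-algebroid axioms. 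Then the six displayed relations follow: the first, second, fifth and seventh are literally four of the vertex-algebroid axioms after substituting $\pi(u)(a)=u_0a$ and $\pi(u)(\pi(v)(a))=u_0v_0a$; the third, $u_0(a*a')=a*(u_0a')+(u_0a)*a'$, records that $\pi(u)\in Der(A)$; and the fourth, $a_0(a'\cdot v)=a'*(a_0v)$, is a rewriting of $\pi(a'\cdot v)=a'\pi(v)$ using $a_0u=-\pi(u)(a)$.

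For the converse, start from a $1$-truncated conformal algebra structure on $C=A\oplus B$ with $a_ia'=0$ satisfying the six relations, and \emph{define} $[u,v]=u_0v$, $\langle u,v\rangle=u_1v$, $\pi(u)(a)=u_0a$. Proposition~\ref{C0C1} then supplies at once: $(B,[~,~])$ Leibniz; $\pi$ a Leibniz homomorphism into $Der(A)$, where the ``lands in $Der(A)$'' part is precisely the third displayed relation and the ``homomorphism'' part is the $C_1$-module structure on $C_0$; $\partial$ a $B$-module map with $\pi\circ\partial=0$ and $\partial A$ annihilating everything; and the vertex-algebroid identities numbered (3), (6), (8), (9) together with the symmetry of $\langle~,~\rangle$. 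The remaining axioms (1), (2), (5), (7) are the first, second, fifth, seventh displayed relations read in reverse, and axiom (4), $\pi(a\cdot v)=a\pi(v)$, follows by feeding an arbitrary $a'\in A$ into the fourth displayed relation and using $a_0u=-\pi(u)(a)$ together with commutativity of $*$. I expect the main obstacle to be purely organizational: managing the many case distinctions in the associativity axiom $\alpha_0\beta_i\gamma=\beta_i\alpha_0\gamma+(\alpha_0\beta)_i\gamma$ (according to whether $\alpha,\beta,\gamma$ lie in $C_0$ or $C_1$ and whether $i=0$ or $i=1$), a large number of which collapse to $0=0$ for degree reasons, and matching the surviving cases against the right vertex-algebroid identities---there is no conceptual difficulty, since all ingredients are already in Proposition~\ref{C0C1} and in the two definitions.
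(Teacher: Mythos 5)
The paper states this proposition without proof, citing \cite{LiY}, so there is no in-paper argument to compare against; judged on its own, your plan is sound and the bookkeeping is right. Routing both implications through Proposition~\ref{C0C1} is the natural move: its clauses absorb exactly the vertex-algebroid axioms $[u,v]+[v,u]=\partial\langle u,v\rangle$, $\pi(v)\langle v_1,v_2\rangle=\langle[v,v_1],v_2\rangle+\langle v_1,[v,v_2]\rangle$, $[v,\partial a]=\partial(\pi(v)(a))$ and $\langle v,\partial a\rangle=\pi(v)(a)$ (the last two combining, as you note, to give $[\partial a,v]=0$, i.e.\ that $\partial A$ annihilates $A\oplus B$), while the six displayed relations account for the remaining axioms: the first, second, fifth and sixth are axioms (1), (2), (5), (7) rewritten via $\pi(u)(a)=u_0a$, the third is the statement that $\pi$ lands in $Der(A)$, and the fourth unwinds to $\pi(a'\cdot v)=a'\pi(v)$ exactly as you describe. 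Your identification of the ``homomorphism'' half of the $Der(A)$ axiom with the $C_1$-module structure on $C_0$ is also correct, and the vanishing of $a_0a'$, $a_1a'$, $u_1a$, $a_1u$ is indeed forced by the grading since $C$ lives in degrees $0$ and $1$. The only thing left implicit is the routine check that the mixed cases of the associativity axiom with $\alpha\in C_0$ reduce (via $a_0u=-u_0a$) to cases already covered, but that is exactly the ``purely organizational'' residue you flag, and it does collapse as you expect.
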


\begin{dfn} Let $I$ be a subspace of a vertex $A$-algebroid $B$. The vector space $I$ is called an {\em ideal } of the vertex $A$-algebroid $B$ if $I$ is a left ideal of the Leibniz algebra $B$ and $a\cdot u\in I$ for all $a\in A$, $u\in I$
\end{dfn}

\begin{ex} Let $(A,*)$ be a unital commutative associative algebra. Let $B$ be a vertex $A$-algebroid. We set $A\partial(A):=Span\{a\cdot\partial(a')~|~a,a'\in A\}$. Observe that \begin{eqnarray*}
b_0(a\cdot \partial(a'))&&=a\cdot (b_0\partial(a'))+(b_0a)\cdot \partial(a')=a\cdot \partial(b_0a')+(b_0a)\cdot \partial(a'),\text{ and }\\
a''\cdot(a\cdot\partial(a'))&&=(a''*a)\cdot \partial(a')
\end{eqnarray*} 
are in $A\partial(A)$ for all $b\in B, a,a'\in A$. Hence, $A\partial(A)$ is an ideal of the vertex $A$-algebroid $B$.
\end{ex}

\begin{rmk} $A\partial(A)$ is an abelian Lie algebra.
\end{rmk}

\vspace{0.2cm}

\noindent For the rest of this section, {\em we assume that 

\vspace{0.2cm}

\noindent (i) $(A,*)$ is a finite dimensional unital commutative associative algebra with the identity $\hat{1}$.

\vspace{0.2cm}

\noindent (ii) $B$ is a finite dimensional vertex $A$-algebroid.} 

\begin{prop}\label{radann} \ \ 

\vspace{0.2cm}

\noindent (i) We set $rad \langle ~,~\rangle:=\{u\in B~|~u_1b=0\text{ for all }b\in B\}$. Then $rad\langle~,~\rangle$ is an ideal of the left Leibniz algebra $B$. Also, $rad\langle~,~\rangle$ is a Lie algebra. 

\vspace{0.2cm}

\noindent (ii) We set $Ann_{B}(A):=\{b\in B~|~b_0a=0\text{ for all }a\in A\}$. Then $Ann_{B}(A)$ is an ideal of the left Leibniz algebra $B$. Moreover, $rad\langle~,~\rangle\subseteq Ann_{B}(A)$, and $Leib(B)\subseteq\partial(A)\subseteq Ann_{B}(A)$.

\vspace{0.2cm}

\noindent (iii) $Ann_B(A)$ is a module of the unital commutative associative algebra $A$. Moreover, $Ann_B(A)$ is an ideal of the vertex algebroid $B$. 

\vspace{0.2cm}

\noindent (iv) $rad\langle~,~\rangle$ is an $A$-submodule of $Ann_B(A)$. Moreover, $rad\langle~,~\rangle$ is an ideal of the vertex $A$-algebroid $B$. 
\end{prop}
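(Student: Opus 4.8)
The plan is to verify the four assertions one at a time by substituting the appropriate vertex $A$-algebroid axiom into a straightforward closure computation; the argument is essentially bookkeeping. Throughout I will use that $u_1v=\langle u,v\rangle$ and $u_0a=\pi(u)(a)$, so that $rad\langle~,~\rangle=\{u\in B\mid\langle u,b\rangle=0\text{ for all }b\in B\}$ and $Ann_B(A)=\ker\pi$; I will also use freely that $\pi\colon B\to Der(A)$ is a homomorphism of Leibniz algebras, the skew-symmetry axiom $[u,v]+[v,u]=\partial(\langle u,v\rangle)$, the invariance axiom $\pi(v)(\langle v_1,v_2\rangle)=\langle[v,v_1],v_2\rangle+\langle v_1,[v,v_2]\rangle$, and the axiom $\langle v,\partial(a)\rangle=\pi(v)(a)$.

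For (i), I would fix $u\in rad\langle~,~\rangle$ and $v\in B$ and apply the invariance axiom to $\langle u,b\rangle$ for an arbitrary $b\in B$: both $\langle u,b\rangle$ and $\langle u,[v,b]\rangle$ vanish, forcing $\langle[v,u],b\rangle=0$, so $[v,u]\in rad\langle~,~\rangle$ and $rad\langle~,~\rangle$ is a left ideal. Since $\partial(\langle u,v\rangle)=0$ for $u\in rad\langle~,~\rangle$, the skew-symmetry axiom gives $[u,v]=-[v,u]$, which simultaneously shows $rad\langle~,~\rangle$ is a right ideal (hence a two-sided ideal) and that the bracket is anticommutative on $rad\langle~,~\rangle$; anticommutativity together with the Leibniz identity is the Jacobi identity, so $rad\langle~,~\rangle$ is a Lie algebra.

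For (ii), the decisive observation is $Ann_B(A)=\ker\pi$, and the kernel of a homomorphism of Leibniz algebras is automatically a two-sided ideal. The inclusion $rad\langle~,~\rangle\subseteq Ann_B(A)$ follows from $\pi(u)(a)=\langle u,\partial(a)\rangle=0$ for $u\in rad\langle~,~\rangle$; the inclusion $Leib(B)\subseteq\partial(A)$ is immediate from the skew-symmetry axiom $[u,v]+[v,u]=\partial(\langle u,v\rangle)$ (and the fact that $Leib(B)$ is spanned by such symmetric sums); and $\partial(A)\subseteq Ann_B(A)$ is exactly the defining condition $\pi\circ\partial=0$.

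For (iii) and (iv), I would use $\pi(a\cdot v)=a\,\pi(v)$ to conclude $a\cdot b\in\ker\pi=Ann_B(A)$ whenever $b\in Ann_B(A)$, and the pairing axiom $\langle a\cdot u,v\rangle=a*\langle u,v\rangle-\pi(u)(\pi(v)(a))$ together with $rad\langle~,~\rangle\subseteq\ker\pi$ to conclude $a\cdot u\in rad\langle~,~\rangle$ whenever $u\in rad\langle~,~\rangle$; combined with the left-ideal statements from (i) and (ii), this is precisely what the definition of an ideal of a vertex $A$-algebroid demands. The only step requiring care — the ``obstacle,'' such as it is — is that the $A$-action on $B$ is a priori only a nonassociative unital action, so closure under the action does not by itself yield a genuine $A$-module. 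Here I would invoke the axiom $a\cdot(a'\cdot v)-(a*a')\cdot v=\pi(v)(a)\cdot\partial(a')+\pi(v)(a')\cdot\partial(a)$, whose right-hand side vanishes identically on $\ker\pi=Ann_B(A)$; thus the action is genuinely associative on $Ann_B(A)$, and hence on its $A$-stable subspace $rad\langle~,~\rangle$, which upgrades both to honest $A$-modules and completes (iii) and (iv).
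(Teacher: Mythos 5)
Your proposal is correct and follows essentially the same route as the paper: the same four closure computations, using the invariance axiom for $rad\langle~,~\rangle$, the skew-symmetry axiom $[u,v]+[v,u]=\partial\langle u,v\rangle$ for the right-ideal and Lie-algebra claims, the identification $Ann_B(A)=\ker\pi$ (the paper writes the same computation out as $(u_0b)_0a=u_0(b_0a)-b_0(u_0a)$), and the axioms $\pi(a\cdot v)=a\pi(v)$, $\langle a\cdot u,v\rangle=a*\langle u,v\rangle-\pi(u)(\pi(v)(a))$, and $a\cdot(a'\cdot v)-(a*a')\cdot v=\pi(v)(a)\cdot\partial(a')+\pi(v)(a')\cdot\partial(a)$ for the $A$-module and ideal claims in (iii) and (iv). The only difference is cosmetic: you work in the vertex-algebroid notation while the paper works in the equivalent $1$-truncated conformal algebra notation.
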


\begin{proof} First, we will prove statement $(i)$. We recall that for $q,u,v\in B$, $$q_1(u_0v)=u_0q_1v-(u_0q)_1v.$$ 
Let $b\in B$, $w\in rad\langle~,~\rangle$. Since $t_1(b_0w)=b_0t_1w-(b_0t)_1w=0$ for all $t\in B$, we can conclude that $b_0w\in rad\langle~,~\rangle$ and $rad\langle ~,~\rangle$ is a left ideal of the Leibniz algebra $B$. Since $w_0b=-b_0w+\partial(w_1b)=-b_0w$, this implies that $w_0b\in rad\langle~,~\rangle$ and $rad\langle~,~\rangle$ is a right ideal of the Leibniz algebra $B$. Moreover, $rad\langle ~,~\rangle$ is a Lie algebra. 

\vspace{0.2cm}

\noindent Next, we will prove statement $(ii)$. Clearly, $\partial(A)\subseteq Ann_{B}(A)$. Let $u\in B$ and $b\in Ann_{B}(A)$. Since $(u_0b)_0a=u_0(b_0a)-b_0(u_0a)=0$ for all $a\in A$, we can conclude that $u_0b\in Ann_{B}(A)$ and $Ann_{B}(A)$ is a left ideal of the Leibniz algebra $B$. Since $b_0u=-u_0b+\partial(b_1u)$, we then have that $b_0u\in Ann_{B}(A)$ and $Ann_{B}(A)$ is a right ideal of the Leibniz algebra $B$. Therefore, $Ann_{B}(A)$ is an ideal of the Leibniz algebra $B$. Next, we will show that $rad\langle~,~\rangle\subseteq Ann_{B}(A)$. Let $v\in rad\langle~,~\rangle$. Since $$v_0a=-a_0v=(\partial(a))_1v=0$$ for all $a\in A$, it follows that $rad\langle~,~\rangle\subseteq Ann_{B}(A)$. 

\vspace{0.2cm}

\noindent Next, we will prove statement $(iii)$. Let $b\in Ann_{B}(A)$, $a,\alpha\in A$. Observe that 
$$(a\cdot b)_0a'=-a'_0(a\cdot b)=-a*(a'_0b)=a*(b_0a')=0\text{ for all }a'\in A,$$ and 
$$(a*\alpha)\cdot b=a\cdot(\alpha\cdot b)-(b_0a)\cdot \partial(\alpha)-(b_0\alpha)\cdot\partial(a)=a\cdot(\alpha\cdot b).$$
Hence, $Ann_{B}(A)$ is a module of the commutative associative algebra $A$. Furthermore, $Ann_B(A)$ is an ideal of the vertex $A$-algebroid $B$ since $Ann_B(A)$ is an ideal of the Leibniz algebra $B$ and $a\cdot u\in Ann_B(A)$.

\vspace{0.2cm}

\noindent To prove $(iv)$, we notice that $(a\cdot u)_1b=a*(u_1b)-u_0b_0a=0$ for all $a\in A$, $u\in rad\langle~,~\rangle$, $b\in B$. Consequently, $rad\langle~,~\rangle$ is an $A$-submodule of $Ann_B(A)$. By $(i)$, we can conclude that $rad\langle~,~\rangle$ is an ideal of the vertex $A$-algebroid $B$. 
\end{proof}

\begin{cor} We have $A\partial(A)\subseteq Ann_B(A)$.
\end{cor}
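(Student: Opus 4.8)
The plan is to deduce this immediately from Proposition~\ref{radann}, so almost no new work is needed. First I would recall from part (ii) of that proposition that $\partial(A)\subseteq Ann_B(A)$; equivalently, $(\partial a')_0a''=0$ for all $a',a''\in A$, which is just the Derivation axiom $(\partial a')_0=0$ of the underlying $1$-truncated conformal algebra (or $\pi\circ\partial=0$ in the vertex algebroid language). Second, I would invoke part (iii), which says that $Ann_B(A)$ is closed under the nonassociative $A$-module action: $a\cdot u\in Ann_B(A)$ whenever $a\in A$ and $u\in Ann_B(A)$.

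Combining these: for arbitrary $a,a'\in A$ we have $\partial(a')\in Ann_B(A)$ by the first point, hence $a\cdot\partial(a')\in Ann_B(A)$ by the second. Since $A\partial(A)=\mathrm{Span}\{a\cdot\partial(a')\mid a,a'\in A\}$ by definition and $Ann_B(A)$ is a linear subspace of $B$, it follows that $A\partial(A)\subseteq Ann_B(A)$, which is the claim.

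I do not expect any obstacle here; the statement is a one-line corollary of Proposition~\ref{radann}. If one preferred a self-contained verification, the only identity required is $(a\cdot b)_0a''=-a''_0(a\cdot b)=-a*(a''_0b)=a*(b_0a'')$, exactly as in the proof of Proposition~\ref{radann}(iii), applied with $b=\partial(a')$; since $b_0a''=0$ this gives $(a\cdot\partial(a'))_0a''=0$ directly. So the corollary follows with no further computation.
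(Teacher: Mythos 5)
Your proof is correct and follows essentially the same route as the paper: the paper's one-line argument is exactly the identity $(a\cdot b)_0a'=-a'_0(a\cdot b)=-a*(a'_0b)=a*(b_0a')$ that you record in your final paragraph, applied with $b=\partial(a')\in Ann_B(A)$. Citing Proposition~\ref{radann}(ii) and (iii) directly, as you do first, is just a repackaging of that same computation, so there is nothing to add.
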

\begin{proof} This follows immediately from the fact that $$(a\cdot b)_0a'=-a'_0(a\cdot b)=-a*(a'_0b)=a*(b_0a')\text{ for all }a,a'\in A, b\in B.$$
\end{proof}

\begin{lem}\label{annba} If $B$ is a simple Leibniz algebra , $Leib(B)\neq \{0\}$ and $B\neq Ann_{B}(A)$ then 
$$Leib(B)=\partial(A)=Ann_{B}(A).$$
Moreover, $Leib(B)$ is an ideal of the vertex $A$-algebroid $B$ and $rad\langle~,~\rangle=\{0\}$. 
\end{lem}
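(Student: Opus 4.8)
The plan is to use the simplicity of $B$ as a Leibniz algebra together with Proposition~\ref{radann} to squeeze all the relevant ideals between $Leib(B)$ and $Ann_B(A)$, and then show these coincide. First I would recall that by hypothesis $B$ is simple, so its only ideals are $\{0\}$, $Leib(B)$, and $B$. By Proposition~\ref{radann}(ii), both $\partial(A)$ and $Ann_B(A)$ are ideals of the Leibniz algebra $B$ with $Leib(B)\subseteq\partial(A)\subseteq Ann_B(A)$. Since $Leib(B)\neq\{0\}$ and $B\neq Ann_B(A)$ by assumption, the only possibility consistent with the list of ideals of a simple Leibniz algebra is $\partial(A)=Ann_B(A)=Leib(B)$ (each of these ideals must be one of $\{0\},Leib(B),B$; being sandwiched forces them all to equal the nonzero, proper one).

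Next I would upgrade "ideal of the Leibniz algebra" to "ideal of the vertex $A$-algebroid." Proposition~\ref{radann}(iii) already tells us $Ann_B(A)$ is an ideal of the vertex $A$-algebroid $B$; since $Leib(B)=Ann_B(A)$, the same holds for $Leib(B)$. So that clause is immediate once the chain of equalities is established.

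For the last assertion $rad\langle~,~\rangle=\{0\}$: by Proposition~\ref{radann}(iv), $rad\langle~,~\rangle$ is an ideal of the vertex $A$-algebroid $B$, in particular an ideal of the Leibniz algebra $B$, and by Proposition~\ref{radann}(ii) it is contained in $Ann_B(A)=Leib(B)$. So $rad\langle~,~\rangle$ is either $\{0\}$ or all of $Leib(B)$. To rule out the latter I would argue by contradiction: if $rad\langle~,~\rangle=Leib(B)=\partial(A)$, then $\langle~,~\rangle$ vanishes identically on $\partial(A)\times B$; but the vertex-algebroid axiom $\langle v,\partial(a)\rangle=\pi(v)(a)$ would then force $\pi(v)(a)=0$ for all $v\in B$, $a\in A$, i.e. $\pi\equiv 0$, equivalently $b_0a=0$ for all $b\in B$, $a\in A$ — that is, $B=Ann_B(A)$, contradicting the hypothesis. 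Hence $rad\langle~,~\rangle=\{0\}$.

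I expect the main (minor) obstacle to be the very last step: making sure that the identification $rad\langle~,~\rangle=Leib(B)$ genuinely forces $\pi=0$. The cleanest route is the axiom $\langle v,\partial(a)\rangle=\pi(v)(a)$ (equivalently $\langle\partial(a),u\rangle=-a_0u=u_0a$ from Proposition~\ref{C0C1}(iii)): since $\partial(a)\in\partial(A)=rad\langle~,~\rangle$, its pairing with everything is zero, so $\pi(v)(a)=0$ for all $v,a$, which is exactly $B=Ann_B(A)$. Everything else is bookkeeping with the ideal structure of a simple Leibniz algebra.
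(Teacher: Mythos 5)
Your proposal is correct and follows essentially the same route as the paper: the sandwich $Leib(B)\subseteq\partial(A)\subseteq Ann_B(A)$ from Proposition~\ref{radann}(ii) plus simplicity forces the three to coincide, Proposition~\ref{radann}(iii) gives the vertex-algebroid ideal property, and the identity $\langle\partial(a),b\rangle=-a_0b=b_0a$ rules out $rad\langle~,~\rangle=Leib(B)$ exactly as in the paper's contradiction argument. Your write-up merely spells out the steps the paper compresses into ``Clearly.''
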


\begin{proof} Assume that $B$ is simple and $Leib(B)\neq \{0\}$, $B\neq Ann_{B}(A)$. Clearly, $Leib(B)=\partial(A)=Ann_{B}(A)$.  By Proposition \ref{radann} $(iii)$, we can conclude that $Leib(B)$ is an ideal of the vertex $A$-algebroid $B$. Since $B$ is simple and $B\neq Ann_{B}(A)$ , it follows that $rad\langle~,~\rangle$ is either $\{0\}$ or $Leib(B)$. If $rad\langle~,~\rangle=Leib(B)$, then for every $a\in A$, $b\in B$, $$0=\partial(a)_1b=-a_0b=b_0a.$$ Consequently, $Ann_{B}(A)=B$ which is impossible. Therefore, $rad\langle ~,~\rangle=\{0\}$.
\end{proof}

\vspace{0.2cm}

\noindent We set 
$$Ker(\partial):=\{a\in A~|~\partial(a)=0\}.$$ Clearly, $Ker(\partial)\neq \emptyset$ since $\partial(\hat{1})=0$.
\begin{prop}\label{ker} \ \ 

\vspace{0.2cm}

\noindent (i) $Ker(\partial)$ is a subalgebra of $A$. 

\vspace{0.2cm}

\noindent (ii) $B$ acts trivially on $Ker(\partial)$.

\vspace{0.2cm}

\noindent (iii) $B$ is a $Ker(\partial)$-module when we consider $Ker(\partial)$ as an associative algebra. Moreover, $\partial(A)$ is a $Ker(\partial)$-module.

\vspace{0.2cm}

\noindent (iv) $Ker(\partial)$ contains every idempotent of $A$. 
\end{prop}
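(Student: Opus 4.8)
The plan is to verify each of the four items directly from the defining axioms of a vertex $A$-algebroid and the basic relations collected in Proposition \ref{radann}. For (i), I would start from the identity $\partial(a*a')=a\cdot\partial(a')+a'\cdot\partial(a)$. If $a,a'\in Ker(\partial)$, then the right-hand side is $a\cdot 0+a'\cdot 0=0$, so $a*a'\in Ker(\partial)$; together with $\partial(\hat 1)=0$ this shows $Ker(\partial)$ is a (unital) subalgebra of $(A,*)$. For (ii), the axiom $[v,\partial(a)]=\partial(\pi(v)(a))$ combined with $\pi\circ\partial=0$ gives, for $a\in Ker(\partial)$, that $\partial(\pi(v)(a))=[v,0]=0$; but one must go further and show $\pi(v)(a)=0$ rather than merely lying in $Ker(\partial)$. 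Here I would use the relation $\langle v,\partial(a)\rangle=\pi(v)(a)$: since $\partial(a)=0$, the left side is $0$, hence $\pi(v)(a)=v_0a=0$ for all $v\in B$, i.e. $B$ acts trivially on $Ker(\partial)$.

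For (iii), the point is that $Ker(\partial)$ is an associative subalgebra by (i), and I would check that the nonassociative $A$-module structure on $B$ restricts to an honest (associative, unital) module structure over $Ker(\partial)$. The relevant axiom is $a\cdot(a'\cdot v)-(a*a')\cdot v=\pi(v)(a)\cdot\partial(a')+\pi(v)(a')\cdot\partial(a)$; when $a,a'\in Ker(\partial)$ the right-hand side vanishes (each $\partial$ term is zero), so $a\cdot(a'\cdot v)=(a*a')\cdot v$, giving associativity of the action, and $\hat1\cdot v=v$ gives unitality. For the statement that $\partial(A)$ is a $Ker(\partial)$-submodule, I would use $\partial(a*a')=a\cdot\partial(a')+a'\cdot\partial(a)$ with $a\in Ker(\partial)$: then $a\cdot\partial(a')=\partial(a*a')\in\partial(A)$, so $\partial(A)$ is stable under the $Ker(\partial)$-action. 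For (iv), let $e\in A$ be idempotent, $e*e=e$. Applying $\partial$ and the Leibniz rule for $\partial$ gives $\partial(e)=\partial(e*e)=e\cdot\partial(e)+e\cdot\partial(e)=2\,e\cdot\partial(e)$; I would then iterate or use the module identity from (the general, non-$Ker(\partial)$) axiom $a\cdot(a'\cdot v)-(a*a')\cdot v=\pi(v)(a)\cdot\partial(a')+\pi(v)(a')\cdot\partial(a)$ with $a=a'=e$, $v=\partial(e)$, noting $\pi(\partial(e))=0$ so that $e\cdot(e\cdot\partial(e))=(e*e)\cdot\partial(e)=e\cdot\partial(e)$, whence $e\cdot\partial(e)=e\cdot(e\cdot\partial(e))=\tfrac12\partial(e)$ forces, after one more application, $\partial(e)=0$; so every idempotent lies in $Ker(\partial)$.

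The main obstacle I anticipate is item (iv): the naive computation only shows $\partial(e)=2e\cdot\partial(e)$, and one must leverage the failure of associativity carefully — the correction terms in the module axiom involve $\partial(e)$ itself, so the argument is slightly circular-looking and needs to be unwound in the right order (first establish $e\cdot\partial(e)=\partial(e)$ from $\pi(\partial(e))=0$ and the module axiom, then combine with $\partial(e)=2e\cdot\partial(e)$ to conclude $\partial(e)=0$). Everything else is a short substitution into a single axiom, so (i), (ii), (iii) should be essentially immediate once the correct axiom is identified in each case.
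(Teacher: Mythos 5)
Your proposal is correct, and for parts (i)--(iii) it follows essentially the same route as the paper: (i) is the derivation property of $\partial$, (ii) uses $\pi(v)(a)=\langle v,\partial(a)\rangle=0$ (you rightly note that the identity $[v,\partial(a)]=\partial(\pi(v)(a))$ alone is insufficient), and (iii) is the quasi-associativity axiom plus $a\cdot\partial(\alpha)=\partial(a*\alpha)-\alpha\cdot\partial(a)$. The only genuine divergence is in (iv). The paper first invokes the observation (from Dong--Mason) that idempotents and involutorial units span the same subspace, reduces to an element with $a*a=\hat{1}$, and then computes $0=a\cdot\partial(a*a)=2(a*a)\cdot\partial(a)=2\partial(a)$, where the correction terms die because $\pi\circ\partial=0$. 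You instead work directly with an idempotent $e*e=e$: from $\partial(e)=2e\cdot\partial(e)$ and the quasi-associativity identity $e\cdot(e\cdot\partial(e))=(e*e)\cdot\partial(e)$ (again using $\pi(\partial(e))=0$) you get $\tfrac14\partial(e)=\tfrac12\partial(e)$, hence $\partial(e)=0$. Your version is self-contained and avoids the auxiliary spanning fact about involutorial units, at the cost of one extra iteration of the module axiom; the paper's version makes the final computation a one-liner but imports an external lemma. Both are valid; your write-up of the final step in (iv) is slightly tangled and would benefit from displaying the two evaluations $e\cdot(e\cdot\partial(e))=\tfrac14\partial(e)$ and $(e*e)\cdot\partial(e)=\tfrac12\partial(e)$ explicitly before equating them.
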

\begin{proof}  Let $a,a'\in Ker(\partial)$. Since $\partial(a*a')=a\cdot\partial(a')+a'\cdot\partial(a)=0$, it follows that  $a*a'\in Ker(\partial)$. Hence, $Ker(\partial)$ is a subalgebra of $A$ as desired. This proves $(i)$. Since $b_0a=-a_0b=\partial(a)_1b=0$ for all $a\in Ker(\partial),b\in B$, we can conclude that $B$ acts trivially on $Ker(\partial)$. This proves $(ii)$.

\vspace{0.2cm}

\noindent Next, we will prove statement $(iii)$. Since 
$$a\cdot(a'\cdot u)-(a*a')\cdot u=(u_0a)\cdot \partial(a')+(u_0a')\cdot\partial(a)=0\text{ for all }a,a'\in Ker(\partial),b\in B,$$
we can conclude that $B$ is a $Ker(\partial)$-module when we consider $Ker(\partial)$ as an associative algebra. Let $\alpha\in A$, $a\in Ker(\partial)$. Since $a\cdot\partial(\alpha)=\partial(a*\alpha)-\alpha\cdot\partial(a)=\partial(a*\alpha)\in \partial(A)$, it follows that $\partial(A)$ is a $Ker(\partial)$-module.

\vspace{0.2cm}

\noindent Lastly, we will show that $Ker(\partial)$ contains every idempotent of $A$. We will following the proof in Propositon 4.3 of \cite{DoM}. First, we observe that the idempotents and the involutorial units of $A$ span the same space. Therefore, to show that $Ker(\partial)$ contains every idempotent, it is enough to show that $Ker(\partial)$ contains involutorial units. Let $a\in A$ such that $a*a=\hat{1}$. Since 
$$0=a\cdot\partial(\hat{1})=a\cdot\partial(a*a)=a\cdot(2a\cdot\partial(a))=2(a*a)\cdot\partial(a)+4((\partial(a))_0a)\partial(a)=2\partial(a),$$ 
this implies that $\partial(a)=0$ and $Ker(\partial)$ contains every idempotent of $A$ as required.
\end{proof}

\begin{cor} If $Ker(\partial)=\mathbb{C}\hat{1}$, then $A$ is a local algebra. 
\end{cor}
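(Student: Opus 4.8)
The plan is to show that if $Ker(\partial)=\mathbb{C}\hat{1}$ then $A$ has no nontrivial idempotents, and then invoke the standard fact that a finite-dimensional commutative associative algebra with no nontrivial idempotents is local (its nilradical is a maximal ideal, or equivalently $A/\mathrm{rad}(A)$ is a field since it is a finite-dimensional commutative semisimple algebra with no nontrivial idempotents, hence a single field). So the real content is already packaged in Proposition \ref{ker}(iv): every idempotent of $A$ lies in $Ker(\partial)$.

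First I would let $e\in A$ be any idempotent, so $e*e=e$. By Proposition \ref{ker}(iv), $e\in Ker(\partial)=\mathbb{C}\hat{1}$, so $e=\lambda\hat{1}$ for some $\lambda\in\mathbb{C}$. Then $e*e=e$ forces $\lambda^2=\lambda$, i.e. $\lambda\in\{0,1\}$, so $e\in\{0,\hat{1}\}$. Thus $A$ is a finite-dimensional commutative associative algebra over $\mathbb{C}$ whose only idempotents are $0$ and $\hat{1}$.

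Next I would finish by the usual argument: a finite-dimensional commutative $\mathbb{C}$-algebra $A$ decomposes as a direct product of local algebras $A\cong A_1\times\cdots\times A_r$ (equivalently, $A/\mathrm{rad}(A)$ is a product of copies of $\mathbb{C}$), and the number of factors equals the number of primitive idempotents in a complete orthogonal system; having only the trivial idempotents $0,\hat 1$ forces $r=1$, so $A$ is local. The main (and only) obstacle is citing or reproducing this decomposition cleanly; since it is entirely standard commutative algebra and the key vertex-algebroid input has already been extracted in Proposition \ref{ker}(iv), the proof is essentially a one-line deduction, mirroring the corresponding step in \cite{DoM}.
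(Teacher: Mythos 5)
Your proof is correct and follows exactly the route the paper intends: the corollary is an immediate consequence of Proposition \ref{ker}(iv) (every idempotent lies in $Ker(\partial)$), combined with the standing assumption that $A$ is finite-dimensional and the standard fact that a finite-dimensional commutative algebra whose only idempotents are $0$ and $\hat{1}$ is local. Nothing further is needed.
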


\begin{thm} Assume that $rad\langle~,~\rangle=\{0\}$. Then $Ker(\partial)=\{a\in A~|~u_0a=0\text{ for all }u\in B\}$. In particular, if $B$ is a simple Leibniz algebra, $Leib(B)\neq \{0\}$ and $B\neq Ann_{B}(A)$, then $Ker(\partial)=\{a\in A~|~u_0a=0\text{ for all }u\in B\}$
\end{thm}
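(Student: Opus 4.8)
The plan is to prove the two inclusions separately; both are immediate once one records the identity $\langle\partial a,u\rangle=u_0a$, valid in any vertex $A$-algebroid. Indeed, from the axioms (see also Proposition \ref{C0C1}), for $a\in A$ and $u\in B$ we have $\langle u,\partial a\rangle=\pi(u)(a)=u_0a$, and $\langle~,~\rangle$ is symmetric, so $\langle\partial a,u\rangle=u_0a$. Note also that $u_0a=-a_0u$.

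For the inclusion $Ker(\partial)\subseteq\{a\in A\mid u_0a=0\text{ for all }u\in B\}$, I would simply invoke Proposition \ref{ker}(ii): if $\partial a=0$ then $B$ acts trivially on $a$, i.e.\ $u_0a=-a_0u=\langle\partial a,u\rangle=0$ for every $u\in B$. This direction does not use the hypothesis $rad\langle~,~\rangle=\{0\}$. For the reverse inclusion, suppose $a\in A$ satisfies $u_0a=0$ for all $u\in B$. Then $\langle\partial a,u\rangle=u_0a=0$ for all $u\in B$, which says precisely that $\partial a\in rad\langle~,~\rangle$. Since $rad\langle~,~\rangle=\{0\}$ by hypothesis, we conclude $\partial a=0$, that is $a\in Ker(\partial)$. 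Combining the two inclusions gives $Ker(\partial)=\{a\in A\mid u_0a=0\text{ for all }u\in B\}$.

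For the ``in particular'' clause, under the hypotheses that $B$ is a simple Leibniz algebra with $Leib(B)\neq\{0\}$ and $B\neq Ann_B(A)$, Lemma \ref{annba} already yields $rad\langle~,~\rangle=\{0\}$, so the statement just proved applies verbatim.

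There is no substantive obstacle here; the only points requiring care are correctly extracting the identity $\langle\partial a,u\rangle=u_0a$ from the vertex-algebroid axioms, and observing that ``$B$ acts trivially on $Ker(\partial)$'' in Proposition \ref{ker}(ii) is exactly the assertion $u_0a=0$ for $a\in Ker(\partial)$, so that the forward inclusion is genuinely free. The content of the theorem is therefore entirely carried by the vanishing of $rad\langle~,~\rangle$, which converts ``$\partial a$ is radical for $\langle~,~\rangle$'' into ``$\partial a=0$''.
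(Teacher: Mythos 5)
Your proposal is correct and follows essentially the same route as the paper: the forward inclusion is Proposition \ref{ker}(ii), and the reverse inclusion uses $\langle\partial a,u\rangle=u_0a$ to place $\partial a$ in $rad\langle~,~\rangle=\{0\}$, with the ``in particular'' clause supplied by Lemma \ref{annba}. No issues.
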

\begin{proof} We set $A_0=\{a\in A~|~u_0a=0\text{ for all }u\in B\}$. By Proposition \ref{ker}, in order to show that $Ker(\partial)=A_0$, we only need to prove that $A_0\subseteq Ker(\partial)$. Let $a\in A_0$. Observe that $\partial(a)_1u=-a_0u=u_0a=0$ for all $u\in B$. Since $rad\langle~,~\rangle=\{0\}$, we can conclude immediately that $\partial(a)=0$. Therefore, $a\in Ker(\partial)$ and $A_0\subseteq Ker(\partial)$.
\end{proof}

\vspace{0.2cm}

\noindent Now we study the vertex $A$-algebroids $B$ when $B$ are (semi)simple Leibniz algebras that have $sl_2$ as their Levi factor. Recall that if $B$ is a simple Leibniz algebra such that $Leib(B)\neq 0$, then there exists a simple Lie algebra $S$ such that $Leib(B)$ is an irreducible $S$-module and $B=S\dot{+}Leib({B})$. 

\begin{lem}\label{dimofA} Assume that $B$ is a simple Leibniz algebra such that 

\vspace{0.2cm}

\noindent (i) $Leib(B)\neq\{0\}$ and 

\vspace{0.2cm}

\noindent (ii) $S=Span\{e,f,h\}$, $e_0f=h$, $h_0e=2e$, $h_0f=-2f$, and $e_1f=k\hat{1}\in(\mathbb{C}\hat{1})\backslash\{0\}$. 

\vspace{0.2cm}

\noindent If $A$ is a direct sum of irreducible $sl_2$-modules $\mathbb{C}{\hat{1}}$ and $N$, then the dimension of $N$ is greater than 1. 
\end{lem}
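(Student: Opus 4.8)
The plan is to argue by contradiction: suppose $A = \mathbb{C}\hat 1 \oplus N$ as $sl_2$-modules with $\dim N = 1$. Since $N$ is a one-dimensional $sl_2$-module it must be the trivial module, so $e_0 n = f_0 n = h_0 n = 0$ for any $n \in N$, and more generally $u_0 a = 0$ for all $u \in S = \mathrm{Span}\{e,f,h\}$ and all $a \in A$. Because $B$ is a simple Leibniz algebra with $Leib(B) \ne \{0\}$ and $S$ is its Levi factor, we have $B = S \dot{+} Leib(B)$ with $Leib(B)$ an irreducible $S$-module; since $Leib(B) = \partial(A) \subseteq Ann_B(A)$ (Proposition \ref{radann}(ii)), every element of $Leib(B)$ also acts as zero on $A$ via the $0$-th product. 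Hence $u_0 a = 0$ for \emph{all} $u \in B$, $a \in A$, i.e. $B = Ann_B(A)$. This directly contradicts the standing hypothesis $B \ne Ann_B(A)$ that we are entitled to invoke (it is built into the setting of Lemma \ref{annba}, which we should cite, or can be re-derived: if $B = Ann_B(A)$ then by Lemma \ref{annba}'s argument $A$ would collapse in a way incompatible with $e_1 f = k\hat 1 \ne 0$).

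First I would make precise why $\dim N = 1$ forces $N$ to be the trivial $sl_2$-module: any $1$-dimensional representation of a semisimple (indeed perfect) Lie algebra is trivial because $sl_2 = [sl_2, sl_2]$ acts by scalars that must vanish. Next I would record the consequence for the $C_1$-module structure on $C_0 = A$: using Proposition \ref{C0C1}(i), the action $u \cdot a = u_0 a$ makes $A$ a $B$-module, and on the Levi part $S$ this action factors through the $sl_2$-module structure on $A = \mathbb{C}\hat 1 \oplus N$, both summands now trivial, so $S$ acts as zero. Then the key structural input is that $Leib(B) = \partial(A)$ (from Lemma \ref{annba}, valid precisely because $Leib(B) \ne \{0\}$ and $B \ne Ann_B(A)$), combined with Proposition \ref{radann}(ii) giving $\partial(A) \subseteq Ann_B(A)$; together with $S$ acting as zero this yields $B \cdot A = 0$, i.e. $B = Ann_B(A)$.

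The main obstacle — really the only subtle point — is being careful about the logical status of the hypothesis "$B \ne Ann_B(A)$": the lemma as stated does not list it explicitly, so I need to check whether it is forced by the remaining hypotheses. It is: if $B = Ann_B(A)$ then $b_0 a = 0$ for all $b \in B$, $a \in A$, hence $a_0 b = -b_0 a = 0$ as well, and the Commutativity axiom gives $\langle \partial a, u \rangle = -a_0 u = 0$; but by part (ii) of the lemma $e_1 f = k\hat 1 \ne 0$, and one checks via the associativity/module-homomorphism property of $\langle~,~\rangle$ (Proposition \ref{C0C1}(iii)) that $\langle~,~\rangle$ being nonzero on $S$ is incompatible with $A$ being forced into the configuration above — more directly, $h = e_0 f \in [B,B]$, and $k\hat 1 = e_1 f$ pairs $S$ nontrivially into $\mathbb{C}\hat 1$, which survives; the contradiction instead comes out cleanly from $Ann_B(A) = B \Rightarrow \partial(A)_1 b = -a_0 b = b_0 a = 0$, forcing (as in Lemma \ref{annba}) $rad\langle~,~\rangle$ and the simplicity of $B$ into conflict. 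So I would either cite Lemma \ref{annba} to get $B \ne Ann_B(A)$ as part of the ambient assumptions, or spell out this short sub-argument. Once $B = Ann_B(A)$ is ruled out, the contradiction is immediate and the proof is complete: $\dim N \ge 2$.
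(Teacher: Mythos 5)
Your reduction is fine up to a point: if $\dim N=1$ then $N$ is a trivial $sl_2$-module, and since $Leib(B)=\partial(A)\subseteq Ann_B(A)$ acts as zero on $A$, the whole of $B=S\dot{+}Leib(B)$ annihilates $A$, i.e.\ $B=Ann_B(A)$. The gap is in the final step: you treat ``$B\neq Ann_B(A)$'' as a standing hypothesis you are entitled to invoke, but it is not. It is a hypothesis \emph{of} Lemma \ref{annba}, not of the section (the only blanket assumptions in force at Lemma \ref{dimofA} are finite-dimensionality of $A$ and $B$; the assumption that $A$ is a nontrivial $B$-module is introduced only \emph{after} this lemma). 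Your attempt to re-derive it is circular: from $B=Ann_B(A)$ you get $\partial(A)\subseteq rad\langle~,~\rangle$, hence $rad\langle~,~\rangle$ is a nonzero proper ideal (proper because $e_1f=k\hat{1}\neq 0$), so by simplicity $rad\langle~,~\rangle=Leib(B)$ --- which is a perfectly consistent configuration, not a conflict. The ``impossibility'' in the corresponding step of Lemma \ref{annba} comes precisely from the hypothesis $B\neq Ann_B(A)$ assumed there, which is exactly what you are trying to establish here. As written, no contradiction has been produced.

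The contradiction genuinely requires the $A$-module structure on $B$, which your argument never touches. The paper's proof writes $a\cdot e=xe+yf+zh+\beta\partial(a)$ for a basis vector $a$ of $N$, uses the compatibility $u_0(a\cdot v)-a\cdot(u_0v)=(u_0a)\cdot v=0$ to pin down $a\cdot e=xe$, uses $a\cdot(a\cdot u)-(a*a)\cdot u=2(u_0a)\cdot\partial(a)=0$ together with the two possible algebra structures ($a*a=\hat{1}$ or $a*a=0$) to force $x=\pm1$ or $x=0$, and finally applies $(a\cdot e)_1f=a*(e_1f)-e_0f_0a$, which gives $xk\hat{1}=ka$, i.e.\ $a\in\mathbb{C}\hat{1}$ (or $a=0$) --- the actual contradiction. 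To repair your proof you would need to carry out this computation (or an equivalent use of the pairing axiom $\langle a\cdot u,v\rangle=a*\langle u,v\rangle-\pi(u)(\pi(v)(a))$); the purely ideal-theoretic route you propose does not close.
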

\begin{proof} Notice that $Leib(B)=\partial(A)=\partial(\mathbb{C}\hat{1}\oplus N)=\partial(N)$. Let us assume that $\dim~N=1$. We set $N=\mathbb{C}a$. Hence, $N$ is a trivial module of $sl_2$. In fact, $N$ is a trivial $B$-module. Let $\{\hat{1},~a\}$ be a basis of $A$. Recall that there exist only two 2-dimensional unital commutative associative algebras. For the first algebra, we have $a*a=\hat{1}$ and for the second algebra, we have $a*a=0$. We set $a\cdot e=xe+yf+zh+\beta\partial(a)$. Here, $x,y,z,\beta\in \mathbb{C}$. Observe that for $u,v\in B$, we have $$u_0(a\cdot v)-a\cdot(u_0v)=(u_0a)\cdot v=0$$ because $N(=\mathbb{C}a)$ is a trivial $sl_2$-submodule of $A$. This implies that
\begin{eqnarray*}
&&0=h_0(a\cdot e)-a\cdot (h_0e)=2xe-2yf-2a\cdot e,\\
&&0=e_0(a\cdot e)-a\cdot (e_0e)=yh-2ze,\\
&&0=f_0(a\cdot e)-a\cdot (f_0e)=-xh+2zf+a\cdot h.
\end{eqnarray*}
Consequently, we have $y=z=0$, $a\cdot e=xe,\text{ and }a\cdot h=xh.$  

\vspace{0.2cm}

\noindent Observe that 
$a\cdot(a\cdot u)-(a*a)\cdot u=2(u_0a)\cdot \partial(a)=0\text{ for all }u\in B.$ 
If $a*a=\hat{1}$ then $e=(a*a)\cdot e=a\cdot(a\cdot e)=a\cdot (xe)=x^2 e$. Therefore, $x$ is either 1 or $-1$. 
If $a*a=0$ then $0=x^2e$. Hence, $x=0$

\vspace{0.2cm}

\noindent Recall that $(a\cdot u)_1v=a*(u_1v)-u_0v_0a=a*(u_1v)$ for all $u,v\in B$. Now, substituting $u$ (respectively, $v$) by $e$ (respectively, $f$), we have
\begin{eqnarray*}
&&a=\pm 1\text{ when }a*a=1,\\
&&a=0\text{ when }a*a=0.
\end{eqnarray*} 
This is a contradiction. Therefore, $\dim N>1$. 
\end{proof}

\vspace{0.2cm}

\noindent Recall that if $B$ is a semisimple Leibniz algebra such that $Leib(B)\neq\{0\}$, then there exist simple Lie algebras $S_1,..., S_t$ such that $B=(S_1\oplus....\oplus S_t)\dot{+} Leib(B)$. Moreover, $Leib(B)=\partial(A)$ since $Leib(B)=rad(B)$ and $Leib(B)\subseteq \partial(A)\subseteq rad(B)$. 

\vspace{0.2cm}

\noindent Now, let us focus on the case when $t=1$ and $S_1=Span\{e,f,h~|~e_0f=h,~h_0e=2e,~h_0f=-2f\}\cong sl_2$. Also, let us assume that $e_1f=k\hat{1}\in\mathbb{C}\hat{1}\backslash\{0\}$, and $A$ is a direct sum of irreducible $sl_2$-modules $\mathbb{C}\hat{1}$ and $N$. By following the proof in Lemma \ref{dimofA}, one can show that the dimension of $N$ is greater than 1. We summarize this discussion in the following Lemma.
\begin{lem} Assume that $B=S\dot{+}Leib(B)$ is a semisimple Leibniz algebra such that $Leib(B)\neq\{0\}$ and $S=Span\{e,f,h\}$ such that $e_0f=h$, $h_0e=2e$, $h_0f=-2f$, and $e_1f=k\hat{1}\in(\mathbb{C}\hat{1})\backslash\{0\}$. If $A$ is a direct sum of irreducible $sl_2$-modules $\mathbb{C}{\hat{1}}$ and $N$, then the dimension of $N$ is greater than 1. 
\end{lem}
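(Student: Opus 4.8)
The plan is to reduce this semisimple case to the already-established simple case in Lemma \ref{dimofA} by making the parallel argument, since the paragraph preceding the statement already asserts $Leib(B)=\partial(A)$ (using $Leib(B)=rad(B)$ together with $Leib(B)\subseteq\partial(A)\subseteq rad(B)$). First I would write $A=\mathbb{C}\hat{1}\oplus N$ and argue by contradiction, assuming $\dim N=1$, so $N=\mathbb{C}a$ and $N$ is a trivial $sl_2$-module; as in the simple case, $\partial(N)=Leib(B)$ and $a$ is annihilated by the $sl_2$-action on $A$ (i.e., $u_0a=0$ for all $u\in S$, hence for all $u\in B$ since $\partial(A)$ annihilates everything). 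The two-dimensional unital commutative associative algebra structure on $A$ is again one of exactly two: $a*a=\hat{1}$ or $a*a=0$.

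Next I would exploit the identity $u_0(a\cdot v)-a\cdot(u_0v)=(u_0a)\cdot v=0$ from the vertex-algebroid axioms. Writing $a\cdot e = xe+yf+zh+\beta\,\partial(a)$ and applying this identity with $u\in\{h,e,f\}$ acting on $v=e$, exactly as in Lemma \ref{dimofA}, forces $y=z=0$, $a\cdot e = xe$, and $a\cdot h = xh$. Then the module axiom $a\cdot(a\cdot u)-(a*a)\cdot u = 2(u_0a)\cdot\partial(a)=0$ pins down $x\in\{1,-1\}$ when $a*a=\hat{1}$ and $x=0$ when $a*a=0$. Finally, the pairing axiom $(a\cdot u)_1 v = a*(u_1 v) - u_0 v_0 a = a*(u_1 v)$, applied to $u=e$, $v=f$ with $e_1f = k\hat{1}$, $k\neq 0$, yields $a\cdot(k\hat{1}) = a = \pm a$ in the nontrivial case and $a = 0$ in the trivial case — a contradiction in every branch (the only subtlety, as in the simple case, being to notice that $(a\cdot e)_1 f = x\langle e,f\rangle = xk\hat{1}$ while $a*(e_1 f) = a*(k\hat{1}) = ka$, so $xk\hat{1} = ka$, which is impossible since $\hat{1}$ and $a$ are linearly independent). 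Hence $\dim N > 1$.

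Since none of these computations use simplicity of $B$ beyond the structural facts $B=S\dot{+}Leib(B)$, $Leib(B)=\partial(A)$, and the specified $sl_2$-relations — all of which hold here by hypothesis and by the remarks recalled just before the statement — the argument of Lemma \ref{dimofA} transfers essentially verbatim. The only genuine point to check is that the reduction "$u_0a=0$ for all $u\in S$ implies $u_0a=0$ for all $u\in B$" still works: this is because $B = S + \partial(A)$ and $\partial(A)$ acts as zero on $A$, so there is no new obstruction coming from the larger radical. I do not expect a substantive obstacle; the proof is a direct adaptation, which is why the authors state it as a corollary-style lemma with the proof deferred to "following the proof in Lemma \ref{dimofA}."
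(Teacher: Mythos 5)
Your proposal is correct and matches the paper's approach exactly: the authors themselves prove this lemma only by remarking that one follows the proof of Lemma \ref{dimofA}, using precisely the structural facts you identify ($B=S\dot{+}Leib(B)$, $Leib(B)=\partial(A)=\partial(N)$, and $\partial(A)\subseteq Ann_B(A)$ so that triviality over $sl_2$ gives triviality over $B$). Your spelled-out contradiction $xk\hat{1}=ka$ is in fact a cleaner rendering of the paper's final step.
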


\vspace{0.2cm}

\noindent{\em For the rest of this section we assume that $A$ is not a trivial module of the Leibniz algebra $B$.}

\begin{thm}\label{Bsimple} Let $B$ be a simple Leibniz algebra such that $Leib(B)\neq\{0\}$. 
Assume that its Levi factor $S=Span\{e,f,h\}$ such that $e_0f=h$, $h_0e=2e$, $h_0f=-2f$, and $e_1f=k\hat{1}\in(\mathbb{C}\hat{1})\backslash\{0\}$. Then 

\vspace{0.3cm}

\noindent (i) $e_1e=f_1f=e_1h=f_1h=0$, $k=1$, $h_1h=2\hat{1}$.

\vspace{0.2cm}

\noindent (ii) $Ker(\partial)=\mathbb{C}\hat{1}$

\vspace{0.2cm}

\noindent (iii) $Leib(B)$ is an irreducible $sl_2$-module of dimension 2. Moreover, as a $sl_2$-module, $A$ is a direct sum of a trivial module and an irreducible $sl_2$-module of dimension 2. 

\vspace{0.2cm}

\noindent (iv) $A$ is a local algebra. Let $A_{\neq 0}$ be an irreducible $sl_2$-submodule of $A$ that has dimension 2. Let $a_0$ be the highest weight vector of $A_{\neq 0}$ of weight 1 and let $a_1=f_0a_0$. Hence, the set $\{a_0,a_1\}$ forms a basis of $A_{\neq 0}$, the set $\{\hat{1},a_0,a_1\}$ is a basis of $A$, and the set $\{\partial(a_0),\partial(a_1)\}$ is a basis of $Leib(B)$. 

\vspace{0.2cm}

\noindent Relationships among $a_0,a_1,e,f,h,\partial(a_0),\partial(a_1)$ are desribed below:
\begin{eqnarray}
&&(\partial(a_0))_1e=0,~(\partial(a_0))_1f=a_1,~(\partial(a_0))_1h=a_0,\\
&&(\partial(a_1))_1e=a_0,~(\partial(a_1))_1f=0,~(\partial(a_1))_1h=-a_1,\\
&&a_0\cdot e=0,~a_0\cdot f=\partial(a_1),~a_0\cdot h=\partial(a_0),~a_0\cdot \partial(a_i)=0\text{ for }i\in\{0,1\},\\
&&a_1\cdot e=\partial(a_0),~a_1\cdot f=0,~a_1\cdot h=-\partial(a_1),~a_1\cdot \partial(a_i)=0\text{ for }i\in\{0,1\},\\
&&a_i*a_j=0\text{ for all }i,j\in\{0,1\}.
\end{eqnarray}
\end{thm}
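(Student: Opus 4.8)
The plan is to exploit the representation theory of $sl_2$ together with the axioms of a vertex $A$-algebroid, bootstrapping from the assumption that $A$ is a nontrivial $B$-module. Since $B$ is simple with $Leib(B)\neq\{0\}$ and $B$ is not annihilated by $A$ (i.e. $B\neq Ann_B(A)$, which follows from nontriviality of the $B$-action), Lemma \ref{annba} gives $Leib(B)=\partial(A)=Ann_B(A)$ and $rad\langle~,~\rangle=\{0\}$. In particular $\langle~,~\rangle$ is nondegenerate on $B$, and by the preceding theorem $Ker(\partial)=\{a\in A~|~u_0a=0\ \forall u\in B\}$; combined with Proposition \ref{ker}(iv) this will eventually force $Ker(\partial)=\mathbb{C}\hat{1}$, giving (ii) and hence that $A$ is local. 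To start (i): since $\langle~,~\rangle$ is $S$-invariant (Proposition \ref{C0C1}(iii) / the algebroid axiom $\pi(v)(\langle v_1,v_2\rangle)=\langle[v,v_1],v_2\rangle+\langle v_1,[v,v_2]\rangle$) and $S\cong sl_2$ acts on itself by the adjoint (weight) action, the form restricted to $S$ must be a multiple of the Killing form; weight considerations give $e_1e=f_1f=e_1h=f_1h=0$ and $h_1h=2\langle e,f\rangle=2k\hat 1$. The normalization $k=1$ and $h_1h=2\hat 1$ will come from an associativity/derivation identity: apply the identity $u_0v_1w = v_1u_0w + (u_0v)_1 w$ (the $i=1$ associativity axiom) with $u=e,v=f,w=f$, or compute $\partial(e_1f)$ two ways using $\partial(u_0a)=u_0\partial a$ and $[u,v]+[v,u]=\partial\langle u,v\rangle$; tracking how $\partial(\hat 1)=0$ interacts pins down $k$.

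For (iii): $Leib(B)=\partial(A)$ is an irreducible $S$-module (simplicity of $B$), and $A = Ker(\partial)\oplus (\text{some } S\text{-complement})$; since $\partial$ is an $S$-module map with kernel $Ker(\partial)$, the nonzero part of $A$ maps isomorphically onto $Leib(B)$. The key dimension count: I would show $Leib(B)$ cannot be $1$-dimensional (if it were, $\partial(A)$ would be a trivial line, and one checks via $\langle\partial a,u\rangle = u_0a$ that this forces $u_0a=0$ for all $u$, i.e. $A$ trivial, contradiction), and cannot have dimension $\geq 3$ either — here I would use Lemma \ref{dimofA}'s method in reverse together with the constraint coming from $\langle a\cdot u,v\rangle = a*\langle u,v\rangle - \pi(u)\pi(v)(a)$ and the fact that $A$, being $\mathbb{C}\hat 1$ plus an irreducible of the same dimension as $Leib(B)$, must carry a commutative associative product compatible with all of this; the number of available structure constants is too small unless $\dim Leib(B)=2$. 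Granting $\dim Leib(B)=2$, pick the highest weight vector: $Leib(B)$ is the $2$-dimensional irreducible, so $A_{\neq 0}$ is spanned by a weight-$1$ vector $a_0$ and $a_1=f_0a_0$ of weight $-1$, and $\{\hat 1,a_0,a_1\}$ is a basis of $A$, $\{\partial a_0,\partial a_1\}$ a basis of $Leib(B)$ — this is (iv)'s setup.

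The explicit relations in (iv) are then forced, one family at a time: the pairings $(\partial a_i)_1(\cdot)$ come from $\langle\partial a,u\rangle = \pi(u)(a) = u_0a$ together with the known $sl_2$-action of $e,f,h$ on the weight vectors $a_0,a_1$ (e.g. $(\partial a_0)_1 f = \langle \partial a_0, f\rangle = f_0 a_0 = a_1$ by definition, $(\partial a_0)_1 h = h_0a_0 = a_0$ since $a_0$ has weight $1$, $(\partial a_0)_1 e = e_0 a_0 = 0$ since $a_0$ is highest weight). The products $a_i\cdot e, a_i\cdot f, a_i\cdot h$ are determined by running the argument in the proof of Lemma \ref{dimofA}: write $a_0\cdot e = xe+yf+zh+(\text{term in }Leib(B))$ and apply $u_0(a\cdot v) - a\cdot(u_0v) = (u_0a)\cdot v$ for $u=e,f,h$, using that $e_0a_0=0$, $h_0a_0=a_0$, $f_0a_0=a_1$ etc.; degree/weight matching kills the $S$-components and identifies the $Leib(B)$-component via $\partial$. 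That $a_i\cdot \partial(a_j)=0$ and $a_i*a_j=0$ follow from $\partial(a*a')=a\cdot\partial a' + a'\cdot\partial a$, from $a\cdot(a'\cdot u)-(a*a')\cdot u = (u_0a)\cdot\partial a' + (u_0a')\cdot\partial a$, and from the fact (Proposition \ref{radann}, Corollary after it) that $A\partial(A)\subseteq Ann_B(A)=Leib(B)$ is abelian — so products land in $Leib(B)$ and then a second application of an axiom collapses them to zero, consistent with $A\cong \mathbb{C}[\epsilon]/(\epsilon^2)\otim(\text{trivial})$ being the right local algebra.

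The main obstacle I anticipate is the dimension rigidity in (iii): ruling out $\dim Leib(B)\geq 3$ cleanly. The weight-vector bookkeeping for a $3$- or higher-dimensional irreducible has more free parameters, and one must show the associative commutative product on $A=\mathbb{C}\hat 1\oplus A_{\neq 0}$ simply cannot be made to satisfy $\langle a\cdot u,v\rangle = a*\langle u,v\rangle - \pi(u)\pi(v)(a)$ and $\partial(a*a')=a\cdot\partial a'+a'\cdot\partial a$ simultaneously — essentially because $a*a'$ for $a,a'\in A_{\neq 0}$ would have to be $S$-equivariant into $\mathbb{C}\hat 1\oplus A_{\neq 0}$ while $\partial$ is injective on $A_{\neq 0}$, and the $\pi(u)\pi(v)(a)$ term (a second-order action) over-determines the system. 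I would expect to need a short case analysis on the weight of $a*a'$; everything else is forced algebra once the $sl_2$ weight decomposition is in hand.
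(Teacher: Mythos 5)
Your framework is the right one (the weight decomposition $A=A_0\oplus A_{\neq 0}$ with $A_0=Ker(\partial)$, the $sl_2$-isomorphism $\partial:A_{\neq 0}\to Leib(B)$, the pairings $(\partial a_i)_1u=u_0a_i$, and weight-matching for the products $a_i\cdot e$, $a_i\cdot f$, $a_i\cdot h$), and the vanishing in (i) together with the explicit relations in (iv) do come out essentially as you describe once the dimension of $A_{\neq 0}$ is known. But the two steps you defer --- the value of $k$ and the rigidity $\dim Leib(B)=2$ --- are the heart of the theorem, and the mechanisms you propose for them would not work. The constant $k$ cannot be pinned down by any identity internal to $S$: computing $\partial(e_1f)$ two ways gives $\partial(k\hat 1)=0=[e,f]+[f,e]$, which carries no information about $k$, and the $i=1$ associativity axiom applied inside $S$ only reproduces the vanishing of the off-diagonal pairings. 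Likewise, ruling out $\dim A_{\neq 0}\geq 3$ does not come from over-determination of the associative product $a*a'$ on $A_{\neq 0}$; in the end $a_i*a_j=0$, so that product by itself imposes no obstruction.

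The missing engine is the single identity $(a\cdot u)_1v=a*(u_1v)-u_0v_0a$, applied to the expansion of $a_i\cdot e$ in the basis $\{e,f,h,\partial(a_0),\dots,\partial(a_m)\}$ of $B$, where $a_0,\dots,a_m$ is the weight basis of the $(m+1)$-dimensional irreducible $A_{\neq 0}$. Pairing $a_0\cdot e$ against $f$ and matching the coefficient of $a_0$ forces $k=m$ (and $a_0\cdot e=0$); pairing $a_m\cdot e$ against $h$ and matching the coefficient of $a_{m-1}$ forces $-m+2=m$, i.e.\ $m=1$, which yields $\dim Leib(B)=2$ and $k=1$ simultaneously. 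The same identity, applied to $\alpha\cdot h$ for a hypothetical $\alpha\in Ker(\partial)\setminus\mathbb{C}\hat 1$ and paired against $h$, produces $2k\alpha-2kz\hat{1}\in Ker(\partial)\cap A_{\neq 0}=\{0\}$ and hence the contradiction proving (ii); note that Proposition \ref{ker}(iv) does not give locality on its own --- the implication runs from $Ker(\partial)=\mathbb{C}\hat 1$ to $A$ local, not the reverse. Without this identity your proposal establishes the easy relations but leaves the $k=1$ claim in (i), all of (ii), and the dimension claim in (iii) unproved.
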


\begin{proof} 
First, we notice that $B=S\dot{+}\partial (A)$ since $\{0\}\neq Leib(B)=\partial(A)$. Also, a $S$-module is a $B$-module because $Leib(B)$ acts trivially on module. In particular, an irreducible $S$-module is an irreducible $B$-module. 

\vspace{0.2cm}

\noindent Observe that as a $S$-module, we can write $A=A_0\oplus A_{\neq 0}$ where $A_0$ is a sum of trivial $sl_2$-submodules of $A$ and $A_{\neq 0}$ is a sum of irreducible $sl_2$-submodules of $A$ that are not trivial modules. 
Moreover, $A_0$ is a sum of trivial $B$-submodules of $A$ and $A_{\neq 0}$ is a sum of irreducible $B$-submodules of $A$ that are not trivial modules. Since $A_0=Ker(\partial)$, this implies that $Leib(B)=\partial(A)=\partial(A_{\neq 0})$. Since $Leib(B)$ is an irreducible $B$-module, and $\partial:A_{\neq 0}\rightarrow Leib(B)$ is a $B$-isomorphism, we can conclude that $A_{\neq 0}$ is an irreducible $B$-module. In particular, $A_{\neq 0}$ is an irreducible $sl_2$-module. 

\vspace{0.2cm}

\noindent Since $S$ is a Lie algebra, this implies that $e_1e,f_1f,e_1h, f_1h\in Ker(\partial)$. Since $Ker(\partial)=A_0$, 
and 
$$h_0(e_1e)=4e_1e,~h_0(f_1f)=-4f_1f,~h_0(e_1h)=2e_1h,~h_0(f_1h)=-2f_1h,$$ we can conclude that \begin{equation}e_1e=f_1f=e_1h=f_1h=0.\end{equation} 

\vspace{0.2cm}

\noindent For simplicity, we assume that $dim~ A_{\neq 0}=m+1$. Let $a_0$ be the highest weight vector of $A_{\neq 0}$ of weight $m$. We set $a_i=\frac{1}{i!}(f_0)^ia_0$ ($i\geq 0$). Hence, $\{a_0,....,a_{m}\}$ is a basis of $A_{\neq 0}$ and 
\begin{eqnarray*}
&&h_0a_i=(m-2i)a_i,~ f_0a_i=(i+1)a_{i+1},~ e_0a_i=(m-i+1)a_{i-1}.
\end{eqnarray*}
Since $e_0f_1h=f_1e_0h+(e_0f)_1h$, we then have that $0=f_1(-2e)+h_1h$. Therefore, $$h_1h=2f_1e=2k\hat{1}.$$ Using the fact that for $a\in A$, $u\in B$, $\partial(a)_1u=u_0a$, we obtain the following:
\begin{eqnarray*}
&&\partial(a_i)_1e=e_0a_i=(m-i+1)a_{i-1},\\
&&\partial(a_i)_1f=f_0a_i=(i+1)a_{i+1},\\
&&\partial(a_i)_1h=(m-2i)a_i.
\end{eqnarray*}

\vspace{0.2cm}

\noindent Now, we will show that $Ker(\partial)=\mathbb{C}\hat{1}$. Suppose that $Ker(\partial)\neq \mathbb{C}\hat{1}$. Then there exists $\alpha\in Ker(\partial)$ such that $\alpha\not\in\mathbb{C}\hat{1}$. Recall that for $a\in A$, $u,v\in B$, we have
$$(a\cdot u)_1v=a* (u_1v)-u_0v_0a.$$ We set 
$\alpha\cdot h=xe+yf+zh+\sum_{i=0}^m\beta_i\partial(a_i)$ where $x,y,z,\beta_i\in\mathbb{C}$. Since 
\begin{eqnarray*}
&&(\alpha\cdot h)_1h=2kz\hat{1}+\sum_{i=0}^m\beta_i\partial(a_i)_1h=2kz\hat{1}+\sum_{i=0}^m\beta_i(m-2i)a_i\text{ and },\\
&&\alpha*(h_1h)-h_0h_0\alpha=2k\alpha,
\end{eqnarray*}
we then have that $\sum_{i=0}^m\beta_i(m-2i)a_i=-2kz\hat{1}+2k\alpha\in Ker(\partial)\cap A_{\neq 0}=\{0\}$. This is a contradiction. Therefore, $Ker(\partial)=\mathbb{C}\hat{1}$. This proves {\it{(ii)}}.

\vspace{0.2cm}

\noindent Next, we will show that $\dim~Leib(B)=\dim~ A_{\neq 0}=2$. For $i\in\{0,1,...,m\}$, we set $$a_i\cdot e=x^ie+y^if+z^ih+\sum_j\beta^i_j\partial(a_j).$$ Here, $x^i,y^i,z^i, \beta^i_j\in\mathbb{C}$. Observe that 
\begin{eqnarray*}
&&(a_i\cdot e)_1f=x^ik\hat{1}+\beta^i_0a_1+\beta^i_1 2a_2+....+\beta^i_{m-1}ma_m\text{ and }\\
&&a_i*(e_1f)-e_0f_0a_i=(k-(i+1)(m-i))a_i.
\end{eqnarray*}

\vspace{0.2cm}

\noindent\underline{Case I: $i=0$} 

\vspace{0.2cm}

\noindent Since $(a_i\cdot e)_1f=a_i*(e_1f)-e_0f_0a_i$, it follows that $x^0=0$, $k=m$, $\beta^0_j=0$ for all $j\in\{0,...,m-1\}$. In particular, we have 
$a_0\cdot e=y^0f+z^0h+\beta^0_m\partial(a_m)$.
Since
\begin{eqnarray*}
&&(a_0\cdot e)_1h=2kz^0\hat{1}+\beta^0_m(m-2m)a_m\text{ and }\\
&&(a_0)*(e_1h)-e_0h_0(a_0)=-me_0(a_0)=0,
\end{eqnarray*}
we can conclude that $z^0=0$, $\beta^0_m=0$ and $a_0*e=y^0f$. Since
$$(a_0\cdot e)_1e=ky^0\hat{1},~(a_0)*(e_1e)-e_0e_0a_0=0$$
we have $y^0=0$. In conclusion, 
\begin{eqnarray}
&&a_0\cdot e=0,\text{ and }\\ 
&&k=m.\end{eqnarray} 

\vspace{0.2cm}

\noindent \underline{Case II: $i\neq 0$} 

\vspace{0.2cm}

\noindent We have 
$x^ik\hat{1}+\beta^i_0a_1+\beta^i_1 2a_2+....+\beta^i_{m-1}ma_m=i(i-m+1)a_i$. Hence, $x^i=0$, $\beta^i_{j-1}=(i-m+1)\delta_{j,i}$, and $$a_i\cdot e=y^if+z^ih+(i-m+1)\partial(a_{i-1})+\beta^i_m\partial(a_m).$$ 
Now, we will focus on the case when $i=m$. Using the fact that 
\begin{eqnarray*}
(a_m\cdot e)_1h&&=(y^mf+z^mh+\partial(a_{m-1})+\beta^m_m\partial(a_m))_1h\\
&&=2kz^m\hat{1}+(m-2(m-1))a_{m-1}+\beta^m_m(m-2m)a_m,\text{ and }\\
a_m*(e_1h)-e_0h_0a_m&&=-e_0(m-2m)a_m=-(m-m+1)(-m)a_{m-1}\\
&&=ma_{m-1}
\end{eqnarray*}
we obtain that $z^m=0$, $\beta^m_m=0$, $m=-m+2$. Hence, 
\begin{equation}\label{dimN}
m=1.\end{equation} 
This implies that $$\dim~Leib(B)=\dim A_{\neq 0}=2$$ and 
\begin{equation}e_1f=1\hat{1},~h_1h=2\hat{1}.\end{equation} This proves {\it{(i)}} and {\it{(iii)}}.
Since
$$(a_1\cdot e)_1e=(y^1f+\partial(a_0))_1e=y^1\hat{1},\text{ and }(a_1)*(e_1e)-e_0e_0a_1=0,$$
we can conclude that $y^1=0$ and 
\begin{equation}a_1\cdot e=\partial(a_0).\end{equation} 

\vspace{0.2cm}

\noindent Next, we will show that $a_0\cdot f=\partial(a_1)$, and $a_1\cdot f=0$. Recall that for $a\in A$, $u,v\in B$, we have $$u_0(a\cdot v)=a\cdot (u_0v)+(u_0a)\cdot v.$$ Hence, 
$$h_0(a_0\cdot f)=-a_0\cdot f,\text{ and }h_0(a_1\cdot f)=-3a_1\cdot f.$$ 
Since $h_0e=2e$, $h_0f=-2f$, $h_0h=0$, $h_0\partial(a_0)=\partial(a_0)$ and $h_0\partial(a_1)=-\partial(a_1)$, we can conclude immediately that \begin{equation}a_1\cdot f=0.\end{equation} 
Also, $a_0\cdot f=\beta \partial(a_1)$ for some scalar $\beta\in\mathbb{C}$. Since 
\begin{eqnarray*}
&&(a_0\cdot f)_1h=\beta(\partial(a_1))_1h=\beta h_0a_1=-\beta a_1\\
&&a_0*(f_1h)-f_0h_0a_0=-f_0a_0=-a_1
\end{eqnarray*}
we obtain that $\beta=1$ and \begin{equation}a_0\cdot f=\partial(a_1).\end{equation}

\vspace{0.2cm}

\noindent Next, we will show that $(a_0)\cdot h=\partial(a_0)$, $(a_1)\cdot h=-\partial(a_1)$. Observe that $$h_0(a_0\cdot h)=a_0\cdot h\text{ and }h_0(a_1\cdot h)=-a_1\cdot h.$$ These imply that 
$a_0\cdot h=\lambda \partial(a_0)$ and $a_1\cdot h=\gamma\partial(a_1)$ for some $\lambda,\gamma\in\mathbb{C}$. Since
\begin{eqnarray*}
&&(a_0\cdot h)_1f=\lambda(\partial(a_0))_1f=\lambda f_0a_0=\lambda a_1\text{ and }\\
&&a_0*(h_1f)-h_0f_0a_0=a_1
\end{eqnarray*}
we have $\lambda=1$ and 
\begin{equation}a_0\cdot h=\partial(a_0).\end{equation} 
Similarly, since 
\begin{eqnarray*}
&&(a_1\cdot h)_1e=\gamma (\partial(a_1))_1e=\gamma e_0a_1=\gamma a_0\text{ and }\\
&&a_1*(h_1e)-h_0e_0a_1=-h_0a_0=-a_0,
\end{eqnarray*}
we can conclude that $\gamma=-1$ and 
\begin{equation}a_1\cdot h=-\partial(a_1).\end{equation} 

\vspace{0.2cm}

\noindent Next, we will show that for $i,j\in\{0,1\}$, $a_i\cdot\partial(a_j)=0$. Observe that $$Leib(B)=\partial(A)=A\partial(A)=Ann_B(A).$$ 
Since 
\begin{eqnarray*}
&&h_0(a_0\cdot \partial(a_1))=0,~h_0(a_0\cdot \partial(a_0))=2a_0\cdot \partial(a_0),\\
&&h_0(a_1\cdot \partial(a_0))=0,~h_0(a_1\cdot \partial(a_1))=-2a_1\cdot \partial(a_1),
\end{eqnarray*}
and
\begin{equation} h_0\partial(a_0)=\partial(a_0),\text{ and }h_0\partial(a_1)=-\partial(a_1),
\end{equation} we can conclude that for $i,j\in\{0,1\}$
\begin{equation}
a_i\cdot \partial(a_j)=0.
\end{equation}

\vspace{0.2cm}

\noindent Now, we will show that for $i,j\in\{0,1\}$ $a_i*a_j=0$.
Since 
$$(a_0\cdot\partial(a_1))_1h=0\text{ and }a_0*(\partial(a_1)_1h)-\partial(a_1)_0h_0a_0=(-1)a_0*a_1,$$ 
these imply that
\begin{equation} a_0*a_1=0.\end{equation} 
Because
$$(a_0\cdot\partial(a_1))_1e=0\text{ and }a_0*(\partial(a_1)_1e)-\partial(a_1)_0e_0a_0=a_0*a_0$$
we have
\begin{equation} a_0*a_0=0.\end{equation}
Notice that 
$$(a_1\cdot\partial(a_0))_1f=0\text{ and }a_1*(\partial(a_0)_1f)-\partial(a_1)_0f_0a_1=a_1*a_1.$$
We then have \begin{equation}a_1*a_1=0.\end{equation}

\vspace{0.2cm}

\noindent Here is a summary of all relations that we have found:
\begin{eqnarray*}
&&(\partial(a_0))_1e=0,~(\partial(a_0))_1f=a_1,~(\partial(a_0))_1h=a_0,\\
&&(\partial(a_1))_1e=a_0,~(\partial(a_1))_1f=0,~(\partial(a_1))_1h=-a_1,\\
&&a_0\cdot e=0,~a_0\cdot f=\partial(a_1),~a_0\cdot h=\partial(a_0),~a_0\cdot \partial(a_i)=0\text{ for }i\in\{0,1\},\\
&&a_1\cdot e=\partial(a_0),~a_1\cdot f=0,~a_1\cdot h=-\partial(a_1),~a_1\cdot\partial(a_i)=0\text{ for }i\in\{0,1\},\\
&&a_i*a_j=0\text{ for all }i,j\in\{0,1\}.
\end{eqnarray*}
Since $Ker\partial=\mathbb{C}\hat{1}$, we can conclude that $A$ is a local algebra. This completes the proof for statement $(iv)$.
\end{proof}

\begin{rmk}\label{BsimpleVnotsimple} $B/Ann_{B}(A)\cong sl_2$ as a Lie algebra (because $Ann_{B}(A)=Leib(B)=\partial(A)$). The space $A_{\neq 0}=Span\{a_0,a_1\}$ is an irreducible $B/Ann_{B}(A)$-module. In addition, $A_{\neq 0}$ is a proper ideal of $A$.
\end{rmk}

\begin{thm}\label{Bsemisimple} Suppose that $B$ is a semisimple Leibniz algebra such that $Leib(B)\neq\{0\}$, and $Ker(\partial)=\{a\in A~|~u_0a=0\text{ for all }u\in B\}$. 

\vspace{0.2cm}

\noindent Assume that the Levi factor $S=Span\{e,f,h\}$ such that $e_0f=h, h_0e=2e, h_0f=-2f$ and $e_1f=k\hat{1}\in\mathbb{C}\hat{1}\backslash \{0\}$. We set $A=\mathbb{C}\hat{1}\oplus_{j=1}^l N^j$ where each $N^j$ is an irreducible $sl_2$-submodule of $A$. Then
 
\vspace{0.2cm}

\noindent (i) $e_1e=f_1f=e_1h=f_1h=0$, $k=1$, $h_1h=2\hat{1}$;

\vspace{0.2cm}

\noindent (ii) $Ker(\partial)=\mathbb{C}\hat{1}$;

\vspace{0.2cm}

\noindent (iii) For $j\in\{1,...,l\}$ $dim ~N^j=2$, and $dim Leib(B)=2l$; 

\vspace{0.2cm}

\noindent (iv) $A$ is a local algebra. For each $j$, we let $a_{j,0}$ be a highest weight vector of $N^j$ and $a_{j,1}=f_0(a_{j,0})$. Then $\{\hat{1}, a_{j,i}~|~j\in \{1,....,l\},~i\in\{0,1\}\}$ is a basis of $A$, and $\{\partial(a_{j,i})~|~j\in\{1,...,l\},~i\in\{0,1\}\}$ is a basis of $Leib(B)$.  

\vspace{0.2cm}

\noindent Relations among $a_{j,i},e,f,h,\partial(a_{j,i})$ are described below: 
\begin{eqnarray}
&&a_{j,i}*a_{j',i'}=0,\label{rel1}\\
&&a_{j,0}\cdot e=0,~a_{j,1}\cdot e=\partial(a_{j,0}),\\
&&a_{j,0}\cdot f=\partial(a_{j,1}),~a_{j,1}\cdot f=0,\\
&&a_{j,0}\cdot h=\partial(a_{j,0}),~a_{j,1}\cdot h=-\partial(a_{j,1}),\\
&&a_{j,i}\cdot \partial(a_{j',i'})=0,\label{rel5}\\
&&\partial(a_{j,i})_1e=e_0a_{j,i}=(2-i)a_{j,i-1},\\
&&\partial(a_{j,i})_1f=f_0a_{j,i}=(i+1)a_{j,i+1},\\
&&\partial(a_{j,i})_1h=h_0a_{j,i}=(1-2i)a_{j,i}.
\end{eqnarray}
\end{thm}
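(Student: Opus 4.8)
The plan is to reduce Theorem~\ref{Bsemisimple} to the already-established simple case (Theorem~\ref{Bsimple}) by decomposing $A$ as an $sl_2$-module and analyzing each isotypic component separately. The hypothesis $Ker(\partial)=\{a\in A\mid u_0a=0\text{ for all }u\in B\}$ is precisely what lets us identify $Ker(\partial)$ with the trivial $sl_2$-submodule $A_0$ of $A$; since $Leib(B)=rad(B)=\partial(A)$ in the semisimple case, and $S\dot{+}Leib(B)=B$, every $S$-module is a $B$-module and $\partial:A_{\neq 0}\to Leib(B)$ is a $B$-module isomorphism. So the first step is to establish that $A=Ker(\partial)\oplus A_{\neq 0}$ as $sl_2$-modules, with $Leib(B)\cong A_{\neq 0}$.

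First I would prove (i): since $S$ is a Lie algebra, $e_1e,f_1f,e_1h,f_1h$ all lie in $Ker(\partial)$ (they are annihilated by $\partial$ because of the skew-symmetry relation $[u,v]+[v,u]=\partial\langle u,v\rangle$ forcing these to be $\partial$-closed, and in a Lie algebra the bracket is already skew), and the weight computations $h_0(e_1e)=4e_1e$ etc.\ show each has nonzero $h$-weight, hence lies in $A_{\neq 0}\cap Ker(\partial)=\{0\}$; then $e_0(f_1h)=f_1(e_0h)+(e_0f)_1h$ gives $h_1h=2f_1e=2k\hat 1$. Next, for (ii) and (iii): fix an irreducible summand $N^j$ of $A_{\neq 0}$ with highest weight vector $a_{j,0}$ of weight $m_j$ and basis $a_{j,i}=\tfrac1{i!}f_0^i a_{j,0}$. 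The key point is that the computations in the proof of Theorem~\ref{Bsimple} that forced $m=1$ — expanding $a_{j,0}\cdot e$ and $a_{j,m_j}\cdot e$ in terms of $e,f,h,\partial(a_{j,\bullet})$ and using $(a\cdot u)_1v=a*(u_1v)-u_0v_0a$ together with $(a\cdot u)_1h$-identities — go through verbatim for each $j$, since they only involve the single $sl_2$ and the single summand $N^j$. This yields $\dim N^j=2$ and $k=m_j=1$ for every $j$, hence $\dim Leib(B)=2l$, and also $Ker(\partial)=\mathbb C\hat 1$ by the same argument as in Theorem~\ref{Bsimple}(ii) (suppose $\alpha\in Ker(\partial)\setminus\mathbb C\hat 1$, expand $\alpha\cdot h$, compute $(\alpha\cdot h)_1h$ two ways to land in $Ker(\partial)\cap A_{\neq 0}=\{0\}$, contradiction).

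For (iv), with each $N^j$ now two-dimensional spanned by $a_{j,0},a_{j,1}$, I would run the same sequence of weight-plus-bilinear-form arguments from Theorem~\ref{Bsimple}(iv) within each $N^j$: the weight constraints $h_0(a_{j,0}\cdot f)=-a_{j,0}\cdot f$ etc.\ pin down $a_{j,i}\cdot e$, $a_{j,i}\cdot f$, $a_{j,i}\cdot h$ up to scalars lying in $\partial(N^j)$, then the $(a\cdot u)_1v$-identities fix those scalars, and the relations $(a_{j,0}\cdot\partial(a_{j,1}))_1\bullet=a_{j,0}*(\partial(a_{j,1})_1\bullet)-\dots$ force $a_{j,i}*a_{j',i'}=0$. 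The $\partial(a_{j,i})_1\bullet$ formulas are immediate from $\partial(a)_1u=u_0a$. The one genuinely new ingredient over the simple case is handling \emph{cross terms} between different summands $N^j$ and $N^{j'}$: I expect the main obstacle to be showing $a_{j,i}*a_{j',i'}=0$ and $a_{j,i}\cdot\partial(a_{j',i'})=0$ for $j\neq j'$. This should follow by the same weight-counting — the relevant products land in $Ann_B(A)=Leib(B)=\partial(A)$ (as established for the simple case in Theorem~\ref{Bsimple} and valid here since $Leib(B)=\partial(A)=A\partial(A)$), and then one picks out the correct weight component and applies an $(a\cdot u)_1v$ identity with $u$ from $N^j$ and $v$ from $S$, using that $\partial(a_{j',i'})_0$ acts as zero — so while the bookkeeping is heavier, no new idea beyond Theorem~\ref{Bsimple} is required.
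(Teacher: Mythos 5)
Your proposal is correct and follows essentially the same route as the paper, which likewise decomposes $A$ into $Ker(\partial)\oplus A_{\neq 0}$, identifies $Leib(B)=\partial(A)=A\partial(A)=rad(B)$, and then simply reruns the computations of Theorem \ref{Bsimple} summand by summand. In fact you are more explicit than the paper about the only genuinely new point --- the cross terms $a_{j,i}*a_{j',i'}$ and $a_{j,i}\cdot\partial(a_{j',i'})$ for $j\neq j'$, which the paper dispatches with ``one only needs to modify the rest of the proof'' --- and your weight-counting treatment of them is the right one.
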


\begin{proof} Observe that $Leib(B)=\partial(A)=A\partial(A)=rad(B)$. We set $A=A_0\oplus A_{\neq 0}$. Here $A_0$ is a sum of trivial $sl_2$-submodules of $A$ and $A_{\neq 0}$ is a sum of irreducibles that are not trivial submodules of $A$. By the assumption $A_0=Ker(\partial)$, we have $Leib(B)=\partial(A_{\neq 0})$. Moreover, $A_{\neq 0}$ is isomorphic to $Leib(B)$ as $sl_2$-modules.

\vspace{0.2cm}

\noindent We set $A_{\neq 0}=\oplus_{j=1}^l N^j$. Here, $N^j$ is an irreducible $sl_2$-module of dimension $m_j+1$. Let $a_{j,0}$ be a highest weight vector of $N^j$ of weight $m_j$. We set $a_{j,i}=\frac{1}{i!}(f_0)^ia_{j,0}$. Hence, $\{a_{j,0},a_{j,1},...,a_{j, m_j}\}$ is a basis of $N^j$ and 
$$h_0(a_{j,i})=(m_j-2i)a_{j,i}, ~f_0(a_{j,i})=(i+1)a_{j,i+1}, ~e_0(a_{j,i})=(m_j-i+1)a_{j,i-1}.$$ 

\vspace{0.2cm}

\noindent Using the fact that $\{e,f,h,\partial(a_{j,i})~|~j\in\{1,...,l\}, i\in \{0,...,m_j\}\}$ is a basis of $B$ and following the proof in Theorem \ref{Bsimple} up to equation (\ref{dimN}), one can show that $Ker(\partial)=\mathbb{C}\hat{1}$, \begin{eqnarray*}
&&k=1, h_1h=2\hat{1},~e_1e=f_1f=e_1h=f_1h=0,\\
&&\partial(a_{j,i})_1e=e_0a_{j,i}=(m_j-i+1)a_{j,i-1},\\
&&\partial(a_{j,i})_1f=f_0a_{j,i}=(i+1)a_{j,i+1},\\
&&\partial(a_{j,i})_1h=h_0(a_{j,i})=(m_j-2i)a_{j,i},\\
&&m_j=1, dim A_{\neq 0}=dim Leib(B)=2l.
\end{eqnarray*} 
In fact, to obtain relations (\ref{rel1})-(\ref{rel5}), one only needs to modify the rest of the proof in Theorem \ref{Bsimple}.
\end{proof}

\vspace{0.2cm}

\begin{cor}\label{BsemisimpleVnotsimple} The following statements hold:

\noindent (i) $Ann_{B}(A)=\partial(A)=Leib(B)$;

\noindent (ii) $B/Ann_{B}(A)$ is isomorphic to $sl_2$ as a Lie algebra; 

\noindent (iii) each $N^j$ is an irreducible $B/Ann_{B}(A)$. Moreover, each $N^j$ is a proper ideal of $A$. 
\end{cor}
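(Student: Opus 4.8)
The plan is to read off all three statements directly from Theorem \ref{Bsemisimple} together with the structure theory of Leibniz algebras recalled in Section 2. For part (i), I would start from the identity $Leib(B)=\partial(A)=A\partial(A)=rad(B)$ established at the beginning of the proof of Theorem \ref{Bsemisimple}, and combine it with Proposition \ref{radann}(ii), which gives the chain $Leib(B)\subseteq \partial(A)\subseteq Ann_B(A)$. Thus the only thing to check is the reverse inclusion $Ann_B(A)\subseteq \partial(A)$. Since $\{e,f,h,\partial(a_{j,i})\}$ is a basis of $B$ by Theorem \ref{Bsemisimple}(iv), an element of $Ann_B(A)$ has the form $xe+yf+zh+\sum_{j,i}\beta_{j,i}\partial(a_{j,i})$; applying it via the $0$-action to the highest weight vectors $a_{j,0}$ and using the explicit formulas $e_0a_{j,0}=0$, $f_0a_{j,0}=a_{j,1}$, $h_0a_{j,0}=a_{j,0}$ (the $m_j=1$ case of the displayed relations) forces $y=z=0$, and then acting on $a_{j,1}$ forces $x=0$. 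Hence $Ann_B(A)\subseteq \mathrm{Span}\{\partial(a_{j,i})\}=\partial(A)$, giving equality throughout.

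For part (ii), once (i) is known we have $B/Ann_B(A)=B/\partial(A)$, and since $B=S\,\dot{+}\,\partial(A)$ with $S=\mathrm{Span}\{e,f,h\}\cong sl_2$ (this is the semisimple-Leibniz decomposition of Theorem (ii) in Section 2, specialized to the single simple factor $sl_2$), the quotient is isomorphic to $S\cong sl_2$ as a Lie algebra. One should note the bracket on the quotient is the induced one, and that $\partial(A)=Leib(B)$ is exactly the kernel that kills the Leibniz part, so the quotient is genuinely a Lie algebra and the map $S\hookrightarrow B\twoheadrightarrow B/\partial(A)$ is a Lie algebra isomorphism.

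For part (iii), that each $N^j$ is an irreducible $B/Ann_B(A)$-module is immediate: $N^j$ is an irreducible $sl_2$-submodule of $A$ by construction, $\partial(A)=Ann_B(A)$ annihilates $A$ (Proposition \ref{radann}(iii) and the corollary after it), so the $B$-action on $N^j$ factors through $B/Ann_B(A)\cong sl_2$ and remains irreducible. To see $N^j$ is a proper ideal of the associative algebra $A$, use relation (\ref{rel1}): $a_{j,i}*a_{j',i'}=0$ for all indices, so in particular $N^j * A \subseteq N^j$ (indeed $N^j$ multiplies to zero against $A_{\neq 0}$, and $\hat 1 * N^j = N^j$), hence $N^j$ is an ideal; it is proper since $\hat 1\notin N^j$ and $\dim N^j=2<\dim A=2l+1$. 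I do not expect a genuine obstacle here — the content is entirely in Theorem \ref{Bsemisimple} — but the one point requiring a little care is the reverse inclusion $Ann_B(A)\subseteq\partial(A)$ in part (i), since it is the only assertion not literally displayed in the theorem and it relies on reusing the weight-vector computations from the proof of Theorem \ref{Bsimple}.
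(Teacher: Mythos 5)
Your proposal is correct and follows essentially the same route as the paper: the paper proves only part (i), by exactly your computation (expanding $b\in Ann_B(A)$ in the basis $\{e,f,h,\partial(a_{j,i})\}$ and applying $b_0$ to $a_{1,0}$ and $a_{1,1}$ to kill the $e,f,h$ components), and dismisses (ii) and (iii) as clear, which you fill in with the expected observations.
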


\begin{proof} We only need to prove $(i)$. The rest is clear. Observe that $Leib(B)=\partial(A)\subseteq Ann_{B}(A)$. Let $b=\alpha_e e+\alpha_f f+\alpha_h h+\sum_{j=1}^l\sum_{i=0}^1\beta_{j,i}\partial(a_{j,i})\in Ann_{B}(A)$. Since 
\begin{eqnarray*}
&&0=b_0a_{1,0}=(\alpha_e e+\alpha_f f+\alpha_h h+\sum_{j=1}^l\sum_{i=0}^1\beta_{j,i}\partial(a_{j,i}))_0a_{1,0}=\alpha_f a_{1,1}+\alpha_ha_{1,0},\text{ and}\\ 
&&0=b_0a_{1,1}=\alpha_e a_{1,0}-\alpha_h a_{1,1},
\end{eqnarray*}
it follows that $b=\sum_{j=1}^l\sum_{i=0}^1\beta_{j,i}\partial(a_{j,i})$, and $Leib(B)=\partial(A)= Ann_{B}(A)$. 
\end{proof}

\section{Vertex Algebras} 

\noindent First, we review necessary background on vertex algebras and their ideals. Next, we investigate criteria for $\mathbb{N}$-graded vertex algebras to be indecomposable and non-simple, and prove Theorem \ref{Main Theorem nonsimple}. In addition, we study the algebraic structure of $\mathbb{N}$-graded vertex algebras $V=\oplus_{n=0}^{\infty}V_{(n)}$ that is generated by $V_{(0)}$ and $V_{(1)}$ such that $\dim~V_{(0)}\geq 2$ and $V_{(1)}$ is a (semi)simple Leibniz algebra that has $sl_2$ as a Levi factor. Lastly, we prove Theorem \ref{MainTheorem}.

\vspace{0.2cm}

\noindent Now, we recall a definition of a vertex algebra (\cite{Bo1, FLM,  LLi}). A {\em vertex algebra} is a vector space $V$ equipped with a linear map 
\begin{eqnarray*}
Y:V&&\rightarrow \End(V)[[x,x^{-1}]]\\
v&&\mapsto Y(v,x)=\sum_{n\in\mathbb{Z}}v_nx^{-n-1}\text{ where } v_n\in\End(V)
\end{eqnarray*} 
and equipped with a distinguished vector ${\bf 1}$, the {\em vacuum vector}, such that for $u,v\in V$, 
\begin{eqnarray*}
&&u_nv=0\text{ for $n$ sufficiently large},\\
&&Y({\bf 1},x)=1,\\
&&Y(v,x){\bf 1}\in V[[x]],\text{ and }\lim_{x\rightarrow 0}Y(v,x){\bf 1}=v
\end{eqnarray*}
and such that 
\begin{eqnarray*}
&&x_0^{-1}\delta\left(\frac{x_1-x_2}{x_0}\right)Y(u,x_1)Y(v,x_2)-x_0^{-1}\delta\left(\frac{x_2-x_1}{-x_0}\right)Y(v,x_2)Y(u,x_1)\\
&&=x_2^{-1}\delta\left(\frac{x_1-x_0}{x_2}\right)Y(Y(u,x_0)v,x_2)
\end{eqnarray*}
the {\em Jacobi identity}.

\vspace{0.2cm}

\noindent From the Jacobi identity we have Borcherds' commutator formula and iterate formula:
\begin{eqnarray}
&&[u_m,v_n]=\sum_{i\geq 0}{m\choose i}(u_iv)_{m+n-i}\\
&&(u_mv)_nw=\sum_{i\geq 0}(-1)^i{m\choose i}(u_{m-i}v_{n+i}w-(-1)^mv_{m+n-i}u_iw)
\end{eqnarray}
for $u,v,w\in V$, $m,n\in \mathbb{Z}$. 

\vspace{0.2cm}

\noindent We define a linear operator $D$ on $V$ by $D(v)=v_{-2}{\bf 1}$ for $v\in V$. Then $$Y(v,x){\bf 1}=e^{xD}v\text{ for }v\in V,$$ and $$[D,v_n]=(Dv)_n=-nv_{n-1}\text{ for }v\in V, ~n\in\mathbb{Z}.$$ Moreover, for $u,v\in V$, we have $$Y(u,x)v =e^{xD}Y(v,-x)u ~(\text{skew symmetry}).$$

\vspace{0.2cm}

\noindent A vertex algebra $V$ equipped with a $\mathbb{Z}$-grading $V=\coprod_{n\in\mathbb{Z}}V_{(n)}$ is called a {\em $\mathbb{Z}$-graded vertex algebra} if ${\bf 1}\in V_{(0)}$ and if for $u\in V_{(k)}$ with $k\in\mathbb{Z}$ and for $m,n\in\mathbb{Z}$, $$u_mV_{(n)}\subseteq V_{(k+n-m-1)}.$$ 
A $\mathbb{N}$-graded vertex algebra is defined in the obvious way.

\begin{prop}\cite{GMS} 

\vspace{0.2cm}

\noindent If $V=\oplus_{n\in \mathbb{N}}V_{(n)}$ is a $\mathbb{N}$-graded vertex algebra then 

\vspace{0.2cm}

\noindent (i) $V_{(0)}$ is a commutative associative algebra with the identity ${\bf 1}$ and $V_{(1)}$ is a Leibniz algebra.

\vspace{0.2cm}

\noindent (ii) In fact, $V_{(0)}\oplus V_{(1)}$ is a 1-truncated conformal algebra.

\vspace{0.2cm}

\noindent (iii) Moreover, $V_{(1)}$ is a vertex $V_{(0)}$-algebroid.
\end{prop}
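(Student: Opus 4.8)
The plan is to identify $C_0=V_{(0)}$ and $C_1=V_{(1)}$, to take $\partial=D|_{V_{(0)}}$ (so $\partial a=a_{-2}{\bf 1}$), and to let the bilinear operations $(u,v)\mapsto u_iv$ for $i=0,1$ be the ones coming from the vertex operator $Y$; then parts (ii), (i), (iii) are checked in that order. The first step is purely grading bookkeeping: since $u_mV_{(n)}\subseteq V_{(k+n-m-1)}$ for $u\in V_{(k)}$, one has for $a,a'\in V_{(0)}$ and $u,v\in V_{(1)}$ that $a_ia'\in V_{(-i-1)}=0$ for $i\ge 0$, that $u_ia\in V_{(-i)}=0$ for $i\ge 1$, that $u_iv\in V_{(1-i)}=0$ for $i\ge 2$, and that $D(V_{(0)})=\{v_{-2}{\bf 1}\mid v\in V_{(0)}\}\subseteq V_{(1)}$. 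Thus all products land back inside $C_0\oplus C_1$, the only nonzero ones being $a\mapsto u_0a\in V_{(0)}$, $(u,v)\mapsto u_0v\in V_{(1)}$ and $(u,v)\mapsto u_1v\in V_{(0)}$; the same degree constraints are what will make every infinite series below collapse to finitely many terms.

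For the Derivation axioms I would use $[D,v_n]=(Dv)_n=-nv_{n-1}$: for $a\in V_{(0)}$ this gives $(\partial a)_0=(Da)_0=0$ and $(\partial a)_1=(Da)_1=-a_0$, while for $u\in V_{(1)}$, $D(u_0a)=[D,u_0]a+u_0Da=(Du)_0a+u_0Da=u_0\partial a$ because $(Du)_0=0$. For the Commutativity axioms I would expand skew symmetry $Y(u,x)v=e^{xD}Y(v,-x)u$ into $u_nv=\sum_{i\ge 0}\tfrac{(-1)^{n+i+1}}{i!}D^i(v_{n+i}u)$: with $n=0$, $u\in V_{(1)}$, $a\in V_{(0)}$ only the $i=0$ term survives (as $a_iu\in V_{(-i)}$), giving $u_0a=-a_0u$; with $n=1$, $u,v\in V_{(1)}$ only $i=0$ survives, giving $u_1v=v_1u$; with $n=0$, $u,v\in V_{(1)}$ only $i=0,1$ survive, giving $u_0v=-v_0u+D(v_1u)=-v_0u+\partial(u_1v)$. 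The Associativity axiom $\alpha_0\beta_i\gamma=\beta_i\alpha_0\gamma+(\alpha_0\beta)_i\gamma$ is immediate from Borcherds' commutator formula with $m=0$, namely $[u_0,v_n]=\sum_{j\ge 0}\binom{0}{j}(u_jv)_{n-j}=(u_0v)_n$, which holds for all of $V$ and in particular on $C$ for $i=0,1$. This proves (ii).

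For (i), the Leibniz algebra structure on $V_{(1)}$ with $[u,v]=u_0v$ and the $V_{(1)}$-module structure on $V_{(0)}$ with $u\cdot a=u_0a$ are then delivered by Proposition \ref{C0C1}, and the commutative associative unital algebra structure on $V_{(0)}$ with $a*a'=a_{-1}a'$ follows by the same mechanism applied to the remaining Borcherds identities: skew symmetry with $n=-1$ kills all but the $i=0$ term, giving $a_{-1}a'=a'_{-1}a$; the iterate formula $(a_{-1}a')_{-1}a''=\sum_{i\ge 0}\bigl(a_{-1-i}(a'_{-1+i}a'')+a'_{-2-i}(a_ia'')\bigr)$ retains only $i=0$ (since $a'_{-1+i}a''\in V_{(-i)}$ and $a_ia''\in V_{(-i)}$ vanish for $i\ge 1$) and further $a_0a''=0$, so it reduces to $a_{-1}(a'_{-1}a'')$; and the vacuum axioms give ${\bf 1}_{-1}a=a=a_{-1}{\bf 1}$. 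For (iii), I would invoke the Proposition of \cite{LiY} recalled in Section 3.2 with $A=V_{(0)}$, $B=V_{(1)}$, $a\cdot u=a_{-1}u$, $\pi(u)(a)=u_0a$, $\partial=D|_{V_{(0)}}$: by (ii) the pair $C=A\oplus B$ is a $1$-truncated conformal algebra, the conditions $a_ia'=0$, $u_0a=\pi(u)(a)$, $a_0u=-u_0a$ are exactly the grading facts above, and the six remaining compatibility relations (controlling $a\cdot(a'\cdot u)$, $u_0(a\cdot v)$, $u_0(a*a')$, $a_0(a'\cdot v)$, $(a\cdot u)_1v$ and $\partial(a*a')$) are each obtained by expanding both sides through the iterate formula and $[D,v_n]=-nv_{n-1}$ and discarding every term sitting in a negative degree. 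I expect the only genuine difficulty to be the bookkeeping in this last batch of checks — tracking precisely which graded components of each term are nonzero and repeatedly using $u_0a=-a_0u$ and the skew-symmetry rewriting of $a'_{-2}b$ to simplify the survivors — since everything substantive is dictated by the grading. This identifies $V_{(1)}$ as a vertex $V_{(0)}$-algebroid and finishes the proof.
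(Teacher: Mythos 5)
Your proof is correct. The paper states this proposition without proof, citing \cite{GMS}, so there is no internal argument to compare against; your derivation is the standard one, and it is the right one: the grading constraints $a_ia'\in V_{(-i-1)}$, $u_ia\in V_{(-i)}$, $u_iv\in V_{(1-i)}$ truncate every instance of skew symmetry, the commutator formula, and the iterate formula to the finitely many terms you list, and the representative identities you work out (e.g.\ $u_0v=-v_0u+\partial(u_1v)$, $(a_{-1}a')_{-1}a''=a_{-1}(a'_{-1}a'')$, $(a\cdot u)_1v=a*(u_1v)-u_0v_0a$) all check out, with the remaining vertex-algebroid compatibilities following by the same mechanism together with the observation that $(\partial a)_0=0$ lets one freely commute $\partial a$ past $V_{(0)}$-elements under $(-)_{-1}$.
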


\vspace{0.2cm}

\noindent Recall that an ideal of the vertex algebra $V$ is a subspace $I$ such that $v_nw\in I$, and $w_nv\in I$ for all $v\in V$, $w\in I$ and $n\in\mathbb{Z}$. Hence, $D(w)=w_{-2}{\bf 1}\in I$. 

\vspace{0.2cm}

\noindent Under the condition that $D(I)\subseteq I$, the left ideal condition $v_nw\in I$ for all $v\in V,w\in I, n\in\mathbb{Z}$ is equivalent to the right ideal condition $w_mv\in I$ for all $v\in V, w\in I, m\in\mathbb{Z}$. 

\vspace{0.2cm}

\noindent Now, we will study indecomposibility property of $\mathbb{N}$-graded vertex algebras.

\begin{prop}\label{Vindecomposable} Let $V=\oplus_{n=0}^{\infty}V_{(n)}$ be a $\mathbb{N}$-graded vertex algebra such that $V_{(0)}$ is a finite dimensional commutative associative algebra and $\dim V_{(0)}\geq 2$. If $V_{(0)}$ is a local algebra then $V$ is indecomposable.
\end{prop}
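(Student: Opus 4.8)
The plan is to show that any vertex algebra decomposition $V = I \oplus J$ into a direct sum of two ideals forces one of the summands to be zero, using the fact that a local commutative associative algebra has no nontrivial idempotents. First I would observe that if $V = I \oplus J$ as vertex algebras (equivalently, as a direct sum of two ideals), then the vacuum vector decomposes as ${\bf 1} = e + e'$ with $e \in I_{(0)} := I \cap V_{(0)}$ and $e' \in J_{(0)} := J \cap V_{(0)}$, since each graded piece $V_{(n)}$ splits as $(I \cap V_{(n)}) \oplus (J \cap V_{(n)})$ and ${\bf 1} \in V_{(0)}$. The key point is that $e$ and $e'$ are orthogonal idempotents of the commutative associative algebra $(V_{(0)}, *)$, where $a * b = a_{-1}b$: indeed, from ${\bf 1} = e + e'$ and the fact that $I$ and $J$ are ideals with $I * J \subseteq I \cap J = \{0\}$, we get $e = {\bf 1} * e = e*e + e'*e = e*e$, and similarly $e' * e' = e'$, $e * e' = 0$.

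Next I would invoke that $V_{(0)}$ is a finite-dimensional local algebra: its only idempotents are $0$ and ${\bf 1}$ (a local algebra has a unique maximal ideal, and any nontrivial idempotent would split it as a direct product of two nonzero algebras, contradicting locality — alternatively, this is essentially the content used in Proposition~\ref{ker}(iv) and its corollary). Hence either $e = 0$, $e' = {\bf 1}$, or $e = {\bf 1}$, $e' = 0$. Say $e' = {\bf 1}$, so ${\bf 1} \in J$. Then the whole vertex algebra $V$ satisfies $V = Y({\bf 1},x)$-closure arguments: more precisely, since $J$ is an ideal and ${\bf 1} \in J$, for any $v \in V$ we have $v = v_{-1}{\bf 1} \in J$ because $v_{-1}{\bf 1}$ is obtained by acting with $v_{-1} \in \End(V)$ on the element ${\bf 1} \in J$, and $v_n w \in J$ for all $v \in V$, $w \in J$, $n \in \mathbb{Z}$ by the left-ideal condition. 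Therefore $V = J$ and $I = \{0\}$, so the decomposition is trivial and $V$ is indecomposable.

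The main obstacle, and the step requiring the most care, is the very first one: justifying that a vertex algebra direct sum decomposition $V = I \oplus J$ is automatically a graded decomposition with ${\bf 1}$ splitting into idempotents of $V_{(0)}$. One must check that the grading is compatible — that is, that the projections $V \to I$ and $V \to J$ respect the $\mathbb{N}$-grading. This follows because $D = $ (the operator $v \mapsto v_{-2}{\bf 1}$) preserves each ideal, and more directly because the $L(0)$-type grading operator, if available, or simply the inductive structure via $V_{(0)}$ generating low-degree pieces, forces homogeneity; absent a conformal vector one argues that an ideal decomposition of the underlying vertex algebra restricts to each $V_{(n)}$ since the component operators $v_n$ are homogeneous of degree $\deg v - n - 1$ and ${\bf 1}$-multiplication $w \mapsto w_{-1}{\bf 1} = w$ is the identity. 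Once that reduction is in hand, the rest is the short idempotent argument together with locality of $V_{(0)}$, and there is nothing further to grind through.
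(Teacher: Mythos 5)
Your proof is correct and takes essentially the same route as the paper: both restrict the decomposition to degree zero, write ${\bf 1}=e+e'$ with the two summands lying in the proper ideals $I\cap V_{(0)}$ and $J\cap V_{(0)}$ of the commutative algebra $V_{(0)}$, and then contradict locality — you via orthogonal idempotents, the paper via the observation that the non-unit $e'={\bf 1}-e$ would have to be a unit. The grading-compatibility point you flag as the delicate step is simply assumed without comment in the paper's proof, so your treatment is if anything slightly more careful.
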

\begin{proof} Assume that $V$ is decomposable. Then there exist nonzero proper ideals $U$ and $W$ such that $V=U\oplus W$. For $n\in\mathbb{N}$, we set $U_{(n)}=V_{(n)}\cap U$ and $W_{(n)}=V_{(n)}\cap W$. If $W_{(0)}=\{0\}$ then ${\bf 1}\in U_{(0)}$ and $U=V$ which is impossible. Hence $W_{(0)}\neq \{0\}$. Similarly, $U_{(0)}\neq \{0\}$. Becuause ${\bf 1}\in V_{(0)}=U_{(0)}\oplus W_{(0)}$,  there exists $u\in U_{(0)}$ and $w\in W_{(0)}$ such that ${\bf 1}=u+w$. Observe that $U_{(0)}$ and $W_{(0)}$ are proper ideals of commutative associative algebra $V_{(0)}$. Also, $u$ and $w$ are not units in $V_{(0)}$. Since $V_{(0)}$ is a local algebra and $u$ is not a unit, these imply that $w=1-u$ is a unit which is a contradiction. Therefore, $V$ is indecomposable. 
\end{proof}

\begin{rmk} It was shown in Theorem 2 of \cite{DoM} that if $V$ is a vertex operator algebra such that $V_{(n)}=0$ for $n\leq -1$, then $V$ is indecomposable if and only if $V_{(0)}$ is a  commutative associative local algebra with respect to the product $a*b=\frac{1}{2}(a_{-1}b+b_{-1}a)$. In particular, when $V$ is an $\mathbb{N}$-graded vertex operator algebra, $V$ is indecomposable if and only if $V_{(0)}$ is a  commutative associative local algebra.
\end{rmk}

\vspace{0.2cm}

\noindent Next Proposition is one of the key ingredients for proving Theorem \ref{MainTheorem}. In addition, it demonstrates an influence of (semi)simple Leibniz algebra on the algebraic structure of the $\mathbb{N}$-graded vertex algebras. 
\begin{prop}\label{Indecomposable} Let $V=\oplus_{n=0}^{\infty}V_{(n)}$ be a $\mathbb{N}$-graded vertex algebra that satisfies the following properties

\vspace{0.2cm}

\noindent (a) $2\leq \dim~ V_{(0)}<\infty$, $1\leq \dim~ V_{(1)}<\infty$;

\vspace{0.2cm}

\noindent (b) $V_{(0)}$ is not a trivial module for a Leibniz algebra $V_{(1)}$, $u_{0}u\neq 0$ for some $u\in V_{(1)}$;

\vspace{0.2cm}

\noindent (c) the Levi factor of $V_{(1)}$ equals $Span\{e,f,h\}$, $e_0f=h$, $h_0e=2e$, $h_0f=-2f$, and $e_1f=k\hat{1}$. Here, $k\in\mathbb{C}\backslash\{0\}$. 

\vspace{0.2cm}

\noindent Assume that one of the following statements hold.

\vspace{0.2cm}

\noindent (I) $V_{(1)}$ is a simple Leibniz algebra;

\vspace{0.3cm}

\noindent (II) $V_{(1)}$ is a semisimple Leibniz algebra and $Ker(D)\cap V_{(0)}=\{a\in V_{(0)}~|~b_0a=0\text{ for all } b\in V_{(1)}\}$. 

\vspace{0.2cm}

\noindent Then $V$ is indecomposable. 
\end{prop}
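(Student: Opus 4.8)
The plan is to reduce the indecomposability of $V$ to the algebraic structure of the vertex $V_{(0)}$-algebroid $V_{(1)}$, which has been completely determined in Theorems \ref{Bsimple} and \ref{Bsemisimple}, and then invoke Proposition \ref{Vindecomposable}. Concretely, in case (I) (resp.\ case (II)) I would first check that the hypotheses of Theorem \ref{Bsimple} (resp.\ Theorem \ref{Bsemisimple}) are met with $A=V_{(0)}$ and $B=V_{(1)}$. Note that hypothesis (b) guarantees that $A$ is not a trivial $B$-module, and that $Leib(B)\neq\{0\}$: indeed, if $u_0u\neq 0$ for some $u\in V_{(1)}$ then $u_0u\in Leib(V_{(1)})$ is nonzero (recall $Leib(\mathfrak{L})=\mathrm{Span}\{[u,u]\}$). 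Hypothesis (c) supplies exactly the Levi-factor normalization assumed in those theorems, and in case (II) hypothesis is literally the condition $Ker(\partial)=\{a\in A\mid u_0a=0\ \forall u\in B\}$ required by Theorem \ref{Bsemisimple} (here $\partial = D|_{V_{(0)}}$, since $D(a)=a_{-2}{\bf 1}$ agrees with the vertex-algebroid map $\partial$).

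Granting the hypotheses, the conclusion $Ker(\partial)=\mathbb{C}\hat{1}$ from part (ii) of Theorem \ref{Bsimple} (resp.\ Theorem \ref{Bsemisimple}) gives, by the Corollary following Proposition \ref{ker}, that $V_{(0)}=A$ is a local algebra. Combined with $2\le \dim V_{(0)}<\infty$ from hypothesis (a), Proposition \ref{Vindecomposable} then immediately yields that $V$ is indecomposable. So the skeleton of the argument is: (1) verify $Leib(V_{(1)})\neq\{0\}$ from (b); (2) verify the remaining hypotheses of the appropriate structure theorem; (3) extract $Ker(\partial)=\mathbb{C}\hat{1}$; (4) conclude $V_{(0)}$ is local; (5) apply Proposition \ref{Vindecomposable}.

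The one genuinely substantive point — and the step I expect to be the main obstacle — is step (1)/(2): confirming that $Leib(V_{(1)})\neq\{0\}$ and hence that $V_{(1)}$ is \emph{not} already a (semisimple) Lie algebra in disguise, so that the structure theorems for \emph{strictly} Leibniz algebras actually apply. If it were to happen that $Leib(V_{(1)})=\{0\}$, then $V_{(1)}$ would be a semisimple Lie algebra with $Ann_{V_{(1)}}(V_{(0)})$ possibly trivial, and the earlier classification would not directly force $V_{(0)}$ to be local; one would then have to argue separately. The hypothesis $u_0u\neq 0$ for some $u\in V_{(1)}$ is precisely what rules this out, since $u_0u = \tfrac12(u_0u+u_0u)\in Leib(V_{(1)})$ and, more to the point, by skew-symmetry $u_0u = \tfrac12 D(u_1u)$, exhibiting a nonzero element of $\partial(V_{(0)})\subseteq Leib(V_{(1)})$; I would need to make sure this chain of identities is spelled out carefully. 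Everything after that is a bookkeeping matter of matching the hypotheses of Proposition \ref{Indecomposable} to those of the cited results, with no further calculation required.
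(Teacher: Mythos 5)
Your proposal is correct and follows exactly the paper's route: the paper's own proof is the one-line observation that the result "follows immediately from Theorem \ref{Bsimple}, Theorem \ref{Bsemisimple} and Proposition \ref{Vindecomposable}," i.e.\ the structure theorems force $Ker(\partial)=\mathbb{C}\hat{1}$, hence $V_{(0)}$ is local, hence $V$ is indecomposable. Your additional verification that hypothesis (b) yields $Leib(V_{(1)})\neq\{0\}$ via $u_0u=\tfrac12\partial(u_1u)$ is a correct and welcome filling-in of a step the paper leaves implicit.
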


\begin{proof} This follows immediately from Theorem \ref{Bsimple}, Theorem \ref{Bsemisimple} and Theorem \ref{Vindecomposable}.
\end{proof}
\vspace{0.2cm}

\vspace{0.2cm}

\noindent Next, we study criteria for $\mathbb{N}$-graded vertex algebras to be non-simple. For a subset $S$ of the vertex algebra $V$, we define $\langle S\rangle$ to be the smallest vertex subalgebra containing $S$ and we call $\langle S\rangle $ the {\em vertex subalgebra generated by $S$}. It was shown in Proposition 3.9.3 of \cite{LLi} that 
$$\langle S\rangle=span\{ u^{(1)}_{n_1}...u^{(r)}_{n_r}{\bf 1}~|~r\in\mathbb{N},~u^{(1)},...,u^{(r)}\in S,~n_1,...,n_r\in\mathbb{Z}\}.$$

\vspace{0.2cm}

\vspace{0.2cm}

\noindent Let $V=\oplus_{n=0}^{\infty}V_{(n)}$ be a $\mathbb{N}$-graded vertex algebra such that $\dim V_{(0)}\geq 2$. For the rest of this section we assume that 

\vspace{0.2cm}

\noindent (i) $V_{(0)}$ and $V_{(1)}$ are finite dimensional;

\noindent (ii) $V$ is generated by $V_{(0)}$ and $V_{(1)}$. 

\vspace{0.2cm}

\noindent Then $V$ is spanned by $u^1_{n_1}....u^r_{n_r}{\bf 1}$ for all $u^i\in V_{(0)}\cup V_{(1)}$, $n_i\in \mathbb{Z}$, $r\geq 0$. Recall that for $a,a'\in V_{(0)}$, $b,b'\in V_{(1)}$, $m,n\in\mathbb{Z}$, we have
\begin{eqnarray}
\label{relab}&&[a_m,a'_n]=0,~[a_m,b_n]=(a_0b)_{m+n},~[b_n,a_m]=(b_0a)_{m+n},\\
\label{relbb}&&[b_m,b'_n]=(b_0b')_{m+n}+m(b_1b')_{m+n-1}.
\end{eqnarray}
By equation (\ref{relab}), $V$ is spanned by 
\begin{equation}\label{ba}v^1_{n_1}....v^s_{n_s}a^1_{m_1}...a^t_{m_t}{\bf 1}
\end{equation} for all $v^i\in V_{(1)}$, $a^j\in V_{(0)}$, $n_i,m_i\in\mathbb{Z}$, $s,t\geq 0$. 

\vspace{0.2cm} 

\noindent By using commutativity formulas (\ref{relab}), (\ref{relbb}), and the fact that $a_{-n-1}=\frac{1}{n}(D(a))_{-n}$ for all $n\geq 1$, we then have the following Proposition. 
\begin{prop}\label{spanV}  \  \  

\begin{enumerate}
\item Let $W$ be a subspace of $V$ that is spanned by $a^1_{m_1}...a^t_{m_t}{\bf 1}$ for all $a^j\in V_{(0)}$, $m_j\in\mathbb{Z}$, $t\geq 0$. The subspace $W$ is invariant under the action of $b_n$ for all $b\in V_{(1)}$, $n\geq 0$.
\item $V$ is spanned by $v^1_{-n_1}....v^s_{-n_s}a$ for all $v^i\in V_{(1)}$, $a\in V_{(0)}$, $n_i\geq 1$, $s\geq 0$.
\end{enumerate}
\end{prop}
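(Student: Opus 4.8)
The plan is to establish the two claims in turn, exploiting the commutator formulas (\ref{relab}), (\ref{relbb}) and the iterate formula together with the identity $a_{-n-1}=\frac{1}{n}(D(a))_{-n}$ for $n\geq 1$ recorded just before the statement. The guiding principle is that everything reduces to a normal-ordering argument: we push all $V_{(0)}$-modes to the right and all $V_{(1)}$-modes to the left, and then we push as many operators as possible into nonnegative (or suitably normalized) mode index, and track which subspaces stay invariant.

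For part (1), I would argue that $W$ is spanned by the stated monomials and check invariance under $b_n$ with $b\in V_{(1)}$, $n\geq 0$, by induction on the length $t$ of the monomial $a^1_{m_1}\cdots a^t_{m_t}{\bf 1}$. The base case $t=0$ is immediate since $b_n{\bf 1}=0$ for $n\geq 0$. For the inductive step, write $b_n\bigl(a^1_{m_1}(a^2_{m_2}\cdots a^t_{m_t}{\bf 1})\bigr)$ and use the commutator bracket from (\ref{relab}): $[b_n,a^1_{m_1}]=-[a^1_{m_1},b_n]=-(a^1_0 b)_{m_1+n}$, and since $a^1_0 b=-b_0 a^1\in V_{(0)}$ by commutativity, this commutator is again a single $V_{(0)}$-mode, namely $(b_0a^1)_{m_1+n}$. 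Hence $b_n a^1_{m_1}\cdots{\bf 1}=a^1_{m_1}b_n(a^2_{m_2}\cdots{\bf 1})+(b_0a^1)_{m_1+n}(a^2_{m_2}\cdots{\bf 1})$; the first term lies in $W$ by the inductive hypothesis (applied with the same $b$, $n$), and the second term is already of the required form. This closes the induction.

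For part (2), the goal is to bring an arbitrary spanning vector $v^1_{n_1}\cdots v^s_{n_s}a^1_{m_1}\cdots a^t_{m_t}{\bf 1}$ of (\ref{ba}) into the form $w^1_{-p_1}\cdots w^r_{-p_r}a$ with all modes strictly negative, all $w^i\in V_{(1)}$, $a\in V_{(0)}$. I would proceed in two stages. First, collapse the $V_{(0)}$-tail: by part (1) any $b_n$ with $n\geq 0$ applied to $a^1_{m_1}\cdots a^t_{m_t}{\bf 1}$ stays inside $W$, and a separate lemma (of the standard type, using $a_{-n-1}=\frac1n(D a)_{-n}$ and $a_m{\bf 1}=0$ for $m\geq0$, plus the commutativity $[a_m,a'_n]=0$) shows that $W$ is spanned by the elements $a^1_{-1}\cdots a^{t-1}_{-1}a^t$ with $a^i\in V_{(0)}$ — in fact $W=V_{(0)}$ under the $(-1)$-product; more simply, every element of $W$ of positive degree is killed or lowered, and an element of $W\cap V_{(0)}$ is just some $a\in V_{(0)}$. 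So after this stage we may assume $t=0$, i.e. the vector is $v^1_{n_1}\cdots v^s_{n_s}{\bf 1}$. Second, normalize the $V_{(1)}$-modes: using $v_{-n-1}=\frac1n(Dv)_{-n}$ to rewrite any mode index $\leq -2$, and handling modes $n_i\geq 0$ by moving them to the right past the other $v$'s via (\ref{relbb}) — each such commutator $[v^i_{n_i},v^j_{n_j}]$ produces terms $(v^i_0v^j)_{n_i+n_j}$ and $n_i(v^i_1v^j)_{n_i+n_j-1}$ with $v^i_0v^j\in V_{(1)}$ and $v^i_1v^j\in V_{(0)}$, reducing either the number of $V_{(1)}$-modes or the degree — and finally using $v_n{\bf 1}=0$ for $n\geq 0$ to absorb any nonnegative mode that reaches the vacuum, one reduces $s$ or the total degree, so an induction on the pair (number of modes, sum of $|n_i|$) finishes.

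\textbf{Main obstacle.} The genuinely delicate point is the termination of the second reduction in part (2): when rewriting $v_{-n-1}$ as $\frac1n(Dv)_{-n}$ the element $Dv=v_{-2}{\bf 1}$ lies in $V_{(2)}$, not in $V_{(1)}$, so one does not literally stay within modes of $V_{(1)}$-elements — one must either enlarge the generating set to include $D^k V_{(1)}$ and note $D$ commutes appropriately with the reduction, or (cleaner) observe that $v_{-n-1}=\frac1{n!}(D^n v)_{-1}$ lets one replace every $V_{(1)}$-mode of index $\leq -2$ by a $(-1)$-mode of an element of the $D$-closure of $V_{(1)}$, and then check that such elements still satisfy commutator relations of the shape (\ref{relab})–(\ref{relbb}) closing on $V_{(0)}\oplus\text{(}D\text{-closure of }V_{(1)}\text{)}$. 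Once the bookkeeping variable is set up so that each commutator strictly decreases it, the induction is routine; getting that bookkeeping variable right (a lexicographic pair of ``number of nonnegative modes'' and ``total weight'') is where the real care lies.
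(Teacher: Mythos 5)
Your part (1) is correct. In part (2), however, the first stage rests on a false claim: the space $W$ spanned by the vectors $a^1_{m_1}\cdots a^t_{m_t}{\bf 1}$ is \emph{not} spanned by the elements $a^1_{-1}\cdots a^{t-1}_{-1}a^t$ and is not contained in $V_{(0)}$. For instance $a_{-2}{\bf 1}=D(a)\in V_{(1)}$ lies in $W$ and is in general nonzero, and more generally $a_{-n-1}{\bf 1}=\tfrac{1}{n!}D^n(a)\in V_{(n)}$. So the step ``after this stage we may assume $t=0$'' silently discards the positive-degree part of $W$. The identity $a_{-n-1}=\tfrac{1}{n}(D(a))_{-n}$ is needed precisely at this point and for the opposite purpose from the one you assign it: since $D(a)=a_{-2}{\bf 1}\in V_{(1)}$ for $a\in V_{(0)}$, every $V_{(0)}$-mode of index $\le -2$ is already a strictly negative mode of a $V_{(1)}$-element. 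Combining this with $[a_m,a'_n]=0$, with $a_m{\bf 1}=0$ for $m\ge 0$, and with $a_{-1}V_{(0)}\subseteq V_{(0)}$, one finds that $W$ is contained in the span of the target vectors $v^1_{-n_1}\cdots v^s_{-n_s}a$ (with each $v^i$ of the form $D(a^j)$ and $a\in V_{(0)}$); that, not $W=V_{(0)}$, is what stage 1 should establish.

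Correspondingly, the ``main obstacle'' you isolate is a phantom: the target form already allows arbitrary strictly negative modes $v_{-n_i}$ with $n_i\ge 1$ of elements of $V_{(1)}$, so there is no need to renormalize $v_{-n-1}$ to a $(-1)$-mode, hence no need for the $D$-closure of $V_{(1)}$ or any worry that $Dv\in V_{(2)}$. The only genuine work in stage 2 is eliminating the modes $v_m$ with $m\ge 0$. Pushing such a mode to the right via (\ref{relbb}) produces $(v_0u)_{m-p}$ with $v_0u\in V_{(1)}$ and index $m-p<m$, plus a $V_{(0)}$-mode $m(v_1u)_{m-p-1}$, which then commutes past everything else by (\ref{relab}) creating only further $V_{(0)}$-modes; on reaching the rightmost slot one uses $v_ma=0$ for $m\ge 1$, $v_0a\in V_{(0)}$, $a'_ma=0$ for $m\ge 0$, and the $D$-identity again for $a'_{-n-1}a$. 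The terminating quantity should be a lexicographic pair such as (number of nonnegative $V_{(1)}$-modes, index or position of the leftmost one) rather than ``total weight'', which is constant on a homogeneous vector. With these two corrections your normal-ordering strategy is sound; note that the paper itself offers no proof, presenting the Proposition as an immediate consequence of (\ref{relab}), (\ref{relbb}) and the identity $a_{-n-1}=\tfrac{1}{n}(D(a))_{-n}$.
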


\vspace{0.2cm}

\noindent For a subset $S$ of $V$, we denote by $(S)$ the smallest ideal of $V$ containing $S$. By Corollary 4.5.10 of \cite{LLi}, we have
$$(S)=Span\{v_nD^i(u)~|~v\in V,n\in\mathbb{Z}, i\geq 0, u\in S\}.$$

\vspace{0.2cm}

\noindent Let $\langle~,~\rangle:V_{(1)}\times V_{(1)}\rightarrow V_{(0)}$ be a symmetric bilinear map defined by $\langle u,v\rangle=u_1v$ for $u,v\in V_{(1)}$. By Proposition \ref{radann}, $rad\langle~,~\rangle$ is an ideal of the vertex $V_{(0)}$-algebroid $V_{(1)}$. Also, $rad\langle~,~\rangle$ is a Lie algebra and 
$$rad\langle~,~\rangle\subseteq Ann_{V_{(1)}}(V_{(0)})=\{b\in V_{(1)}~|~b_0a=0\text{ for all }a\in V_{(0)}\}.$$ 
Let $(rad\langle ~,~\rangle )$ be the smallest ideal of $V$ containing $rad\langle~,~\rangle$. So, we have
\begin{eqnarray*}
(rad\langle~,~\rangle)&&=Span\{v_nD^i(u)~|~v\in V,~n\in\mathbb{Z},~i\geq 0,~u\in rad\langle~,~\rangle\}.\\
\end{eqnarray*}
\begin{lem} 
$(rad\langle~,~\rangle)=Span\{b_nv~|~b\in rad\langle~,~\rangle, n\in\mathbb{Z}, v\in V\}.$
\end{lem}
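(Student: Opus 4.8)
The plan is to prove the two inclusions by sandwiching the right-hand side between $rad\langle~,~\rangle$ and the ideal it generates, then appealing to minimality. Write $J:=\mathrm{Span}\{b_nv\mid b\in rad\langle~,~\rangle,\ n\in\mathbb{Z},\ v\in V\}$. First I would record two elementary facts. Since $b=b_{-1}{\bf 1}$ for $b\in rad\langle~,~\rangle$, we have $rad\langle~,~\rangle\subseteq J$. Since $D$ is a derivation of every $n$-th product, $D(b_nv)=(Db)_nv+b_n(Dv)=-n\,b_{n-1}v+b_n(Dv)$, and both summands lie in $J$, so $D(J)\subseteq J$.

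Next I would show $J\subseteq(rad\langle~,~\rangle)$. Given $b\in rad\langle~,~\rangle$ and $v\in V$, skew symmetry $Y(b,x)v=e^{xD}Y(v,-x)b$ expands $b_nv$ as the finite sum $\sum_{i\ge 0}\tfrac{(-1)^{n+i+1}}{i!}D^i(v_{n+i}b)$, finite because $v_mb=0$ for $m$ large. Because $rad\langle~,~\rangle\subseteq(rad\langle~,~\rangle)$, the latter is an ideal, and every ideal is $D$-stable, each $D^i(v_{n+i}b)$ lies in $(rad\langle~,~\rangle)$; hence $b_nv\in(rad\langle~,~\rangle)$.

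For the reverse inclusion I would use the spanning description $(rad\langle~,~\rangle)=\mathrm{Span}\{v_nD^i(u)\mid v\in V,\ n\in\mathbb{Z},\ i\ge 0,\ u\in rad\langle~,~\rangle\}$ recalled just above the lemma, and show each generator $v_nD^i(u)$ lies in $J$. Iterating $[D,u_p]=-p\,u_{p-1}$ gives $(D^i u)_m=(-1)^i m(m-1)\cdots(m-i+1)\,u_{m-i}$, so applying skew symmetry to $v_nD^i(u)$ rewrites it as a finite $\mathbb{C}$-linear combination of terms $D^j(u_{n+j-i}v)$. Each $u_{n+j-i}v$ lies in $J$ since $u\in rad\langle~,~\rangle$, and $J$ is $D$-stable, so $v_nD^i(u)\in J$; hence $(rad\langle~,~\rangle)\subseteq J$. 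Combining the two inclusions yields $J=(rad\langle~,~\rangle)$. The only part requiring care is the bookkeeping of the skew-symmetry coefficients and the (routine) check that all relevant sums are finite; I do not expect a genuine obstacle here.
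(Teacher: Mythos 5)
Your proof is correct, but for the substantive inclusion it takes a genuinely different route from the paper. Both arguments agree that $J\subseteq(rad\langle~,~\rangle)$ is essentially free (the paper gets it directly from the two-sided ideal property, so your skew-symmetry expansion there is more than is needed, though valid) and both record $D(J)\subseteq J$. The divergence is in proving $(rad\langle~,~\rangle)\subseteq J$: the paper shows that $J$ is itself an ideal containing $rad\langle~,~\rangle$ and invokes minimality, and to do this it uses the commutator formulas $[a_m,b_n]$, $[u_m,b_n]$ together with three specific facts --- $rad\langle~,~\rangle\subseteq Ann_{V_{(1)}}(V_{(0)})$, $rad\langle~,~\rangle$ is a Leibniz ideal of $V_{(1)}$, and $u_1b=0$ for $b\in rad\langle~,~\rangle$ --- plus the standing hypothesis that $V$ is generated by $V_{(0)}$ and $V_{(1)}$. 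You instead take each generator $v_nD^i(u)$ from the spanning description of $(S)$ quoted before the lemma, push the $D^i$ onto the index via $(D^iu)_m=(-1)^im(m-1)\cdots(m-i+1)u_{m-i}$, and apply skew symmetry to land in $D^j(u_{n+j-i}v)\subseteq J$; the coefficient and finiteness bookkeeping you flag is indeed routine and checks out. What your approach buys is generality and economy of hypotheses: it proves $(S)=\mathrm{Span}\{u_nv\mid u\in S,\ n\in\mathbb{Z},\ v\in V\}$ for an arbitrary subset $S$ of an arbitrary vertex algebra, using none of the special structure of $rad\langle~,~\rangle$ and not the generation hypothesis. What the paper's route buys is that its explicit mode computations (e.g.\ $u_mb_nv=b_nu_mv+(u_0b)_{m+n}v$) are exactly the ones reused immediately afterwards in the induction proving Theorem \ref{rad}, so the two proofs share their engine.
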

\begin{proof} We set $K=Span\{b_nv~|~b\in rad\langle~,~\rangle, n\in\mathbb{Z}, v\in V\}$. Since $(rad\langle~,~\rangle)$ is an ideal and $rad\langle~,~\rangle\subseteq (rad\langle~,~\rangle)$, we can conclude that $K\subseteq (rad\langle~,~\rangle)$. In addition, $D(K)\subseteq K$ because $D(b_nv)=b_nD(v)-nb_{n-1}v\in K$ for all $b\in rad\langle~,~\rangle, v\in V, n\in\mathbb{Z}$.

\vspace{0.2cm}

\noindent Now, we will show that $K$ is an ideal of $V$. Let $a\in V_{(0)}$, $u\in V_{(1)}$, $b\in rad\langle~,~\rangle$, and let $m,n\in\mathbb{Z}$. Since $rad\langle~,~\rangle\subseteq Ann_{V_{(1)}}(V_{(0)})$, it follows that $$a_mb_nv=b_na_mv+(a_0b)_{m+n}v=b_na_mv-(b_0a)_{m+n}v=b_na_mv\in K.$$ 
Also, $$u_mb_nv=b_nu_mv+(u_0b)_{m+n}v+m(u_1b)_nv=b_nu_mv+(u_0b)_{m+n}v\in K$$ because $rad\langle~,~\rangle$ is an ideal of $V_{(1)}$. Since $V$ is generated by $V_{(0)}$ and $V_{(1)}$ we can conclude that $w_tb_nv\in K$ for all $w\in V$, $t\in\mathbb{Z}$ and $K$ is an ideal of $V$. Since $rad\langle~,~\rangle \subseteq K$, $K\subseteq (rad\langle~,~\rangle)$ and $(rad\langle~,~\rangle)$ is the smallest ideal that contains $rad\langle~,~\rangle$, we can conclude that $K=(rad\langle~,~\rangle)$.
\end{proof}

\noindent Next Theorem is an important theorem that will be used for proving Theorem \ref{Main Theorem nonsimple}.
\begin{thm}\label{rad} The intersection of $(rad\langle~,~\rangle)$ and $V_{(0)}$ equals $\{0\}$. If $V$ is simple then $(rad\langle~,~\rangle)=\{0\}$. Consequently, $V$ is non-simple when $(rad\langle~,~\rangle)\neq\{0\}$.
\end{thm}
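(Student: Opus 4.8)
The plan is to prove the two assertions in the order they are stated. For the first assertion, that $(\mathrm{rad}\langle~,~\rangle)\cap V_{(0)}=\{0\}$, I would use the explicit description of the ideal furnished by the preceding lemma, namely
$(\mathrm{rad}\langle~,~\rangle)=\mathrm{Span}\{b_nv~|~b\in\mathrm{rad}\langle~,~\rangle,~n\in\mathbb{Z},~v\in V\}$,
together with Proposition \ref{spanV}(2), which lets me write every $v\in V$ as a linear combination of elements $v^1_{-n_1}\cdots v^s_{-n_s}a$ with $v^i\in V_{(1)}$, $a\in V_{(0)}$, $n_i\geq 1$. First I would track the conformal weight: an element $b_nv$ with $b\in\mathrm{rad}\langle~,~\rangle\subseteq V_{(1)}$ lies in $V_{(0)}$ only when $n=1$ and $v\in V_{(1)}$ (weight bookkeeping: $\mathrm{wt}(b_nv)=1+\mathrm{wt}(v)-n-1$). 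So it suffices to show $b_1v=0$ for every $b\in\mathrm{rad}\langle~,~\rangle$ and every $v\in V_{(1)}$. But $b_1v=\langle b,v\rangle$ by definition of the pairing, and this vanishes precisely because $b\in\mathrm{rad}\langle~,~\rangle$. Hence the weight-$0$ part of $(\mathrm{rad}\langle~,~\rangle)$ is zero, proving the first claim.

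For the second assertion, suppose $V$ is simple. Then its only ideals are $\{0\}$ and $V$. Since $(\mathrm{rad}\langle~,~\rangle)$ is an ideal of $V$ (by the preceding lemma), it is either $\{0\}$ or $V$. If it equalled $V$, then in particular it would contain ${\bf 1}\in V_{(0)}$, so $(\mathrm{rad}\langle~,~\rangle)\cap V_{(0)}$ would be nonzero, contradicting the first assertion. Therefore $(\mathrm{rad}\langle~,~\rangle)=\{0\}$. The final sentence is the contrapositive: if $(\mathrm{rad}\langle~,~\rangle)\neq\{0\}$ then $V$ cannot be simple.

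I expect the main obstacle to be the first assertion, specifically making the weight-grading argument airtight: one must be sure that every element of the spanning set $\{b_nv\}$ for $(\mathrm{rad}\langle~,~\rangle)$ can be decomposed into homogeneous components lying in the same ideal, so that projecting onto $V_{(0)}$ is legitimate. This is where the hypothesis that $V$ is $\mathbb{N}$-graded and that $V_{(0)},V_{(1)}$ are finite-dimensional is used: the ideal $(\mathrm{rad}\langle~,~\rangle)$ is a graded subspace because $\mathrm{rad}\langle~,~\rangle$ is homogeneous of weight $1$ and the operators $b_n$ are homogeneous. Once gradedness of the ideal is established, the weight count pins down $n=1$, $v\in V_{(1)}$, and the identification $b_1v=\langle b,v\rangle=0$ closes the argument; the second assertion is then a short formal consequence.
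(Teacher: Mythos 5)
There is a genuine gap in your first assertion, and it sits exactly where the real work of the theorem lies. Your weight formula $\mathrm{wt}(b_nv)=1+\mathrm{wt}(v)-n-1=\mathrm{wt}(v)-n$ is correct, but the conclusion you draw from it is not: $b_nv$ lands in $V_{(0)}$ whenever $v$ is homogeneous of weight $n$, for \emph{every} $n\geq 0$, not only when $n=1$ and $v\in V_{(1)}$. (There is no reason to set $\mathrm{wt}(v)=1$; solving $\mathrm{wt}(v)-n=0$ gives $\mathrm{wt}(v)=n$.) So besides the easy cases $n=0$ (where $b_0a=0$ because $rad\langle~,~\rangle\subseteq Ann_{V_{(1)}}(V_{(0)})$) and $n=1$ (where $b_1v=\langle b,v\rangle=0$ by definition of the radical), you must also show $b_nv=0$ for all $n\geq 2$ and $v\in V_{(n)}$ — for instance $b_2\bigl(s^1_{-1}s^2_{-1}a\bigr)$ with $s^i\in V_{(1)}$, $a\in V_{(0)}$ is a weight-zero vector that your argument never touches. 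This is precisely what the paper proves, by writing $v=s^1_{-m_1}\cdots s^t_{-m_t}q$ via Proposition \ref{spanV} and inducting on the length $t$: the commutator formula gives $\phi_l v=(\phi_0 s^1)_{l-m_1}s^2_{-m_2}\cdots s^t_{-m_t}q$ (the $\phi_1 s^1$ term dies because $\phi\in rad\langle~,~\rangle$, and $\phi_l$ passes through $s^1_{-m_1}$ up to these corrections), and since $\phi_0s^1$ is again in $rad\langle~,~\rangle$ (it is a Leibniz ideal) the induction closes. Your reduction to $b_1v=\langle b,v\rangle$ only disposes of the base of this induction.

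Your remarks about gradedness of the ideal and the deduction of the second and third assertions from the first are fine and agree with the paper, but as it stands the proof of $(rad\langle~,~\rangle)\cap V_{(0)}=\{0\}$ is incomplete: you need the induction-on-length argument (or an equivalent) to kill the spanning vectors $b_nv$ with $n\geq 2$.
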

\begin{proof}  We say that $v$ has {\em length} $t$ if $v$ is of the form $s^1_{-m_{1}}....s^t_{-m_{t}}q$ where $s^{i}\in V_{(1)}, q^i\in V_{(0)}, m_{i_j}>0$. We will use an induction on this type of length to show that for $u\in rad\langle~,~\rangle$, and for $v\in V$ such that $v=s^1_{-m_{1}}....s^t_{-m_{t}}q\in V_{(n)}$ where $s^{i}\in V_{(1)}, q\in V_{(0)}, m_{i_j}>0$, we have
\begin{equation}\label{induction}
u_nv=0.\end{equation}
Recall that $rad\langle~,~\rangle\subseteq Ann_{V_{(1)}}(V_{(0)})$. When $t=0$, we have $$u_nv=u_0q=0.$$ When $t=1$, using the fact that $rad\langle~,~\rangle$ is an ideal of $V_{(1)}$, we can conclude that
$$u_nv=u_{m_1}s^1_{-m_{1}}q=s^1_{-m_{1}}u_{m_1}q+(u_0s^1)_{0}q=0.$$ 
Now, we assume that equation (\ref{induction}) holds for all $b\in rad\langle~,~\rangle$, and for all $w\in V$ that has length $k\leq p$. Let $\phi\in rad\langle~,~\rangle$ and $v=s^1_{-m_{1}}....s^{p+1}_{-m_{{p+1}}}\tau\in V_{(l)}$ where $s^{i}\in V_{(1)}, \tau\in V_{(0)}, m_{i}>0$. Observe that
\begin{eqnarray*}
\phi_lv&&=\phi_l(s^1_{-m_{1}}....s^{p+1}_{-m_{{p+1}}}\tau)\\
&&=s^1_{-m_1}\phi_ls^2_{-m_2}....s^{p+1}_{-m_{{p+1}}}\tau+(\phi_0s^1)_{l-m_1}s^2_{-m_2}....s^{t+1}_{-m_{{p+1}}}\tau\\
&&\ \ \ \ \ +l(\phi_1s^1)_{l-m_1-1}s^2_{-m_2}....s^{p+1}_{-m_{{p+1}}}\tau\\
&&=(\phi_0s^1)_{l-m_1}s^2_{-m_{2}}....s^{p+1}_{-m_{p+1}}\tau.\\
\end{eqnarray*}
Since $\phi_0s^1\in rad\langle~,~\rangle$, $s^2_{-m_{2}}....s^{p+1}_{-m_{p+1}}\tau\in V_{(l-m_1)}$ and the length of $s^2_{-m_{2}}....s^{p+1}_{-m_{p+1}}\tau$ equals $p$, by an induction hypothesis, we then have that 
$$\phi_lv=0.$$ Therefore, for $u\in rad\langle~,~\rangle$, for $v\in V_{(n)}$ such that $v=s^1_{-m_{1}}....s^t_{-m_{t}}q$ where $s^{i}\in V_{(1)}, q^i\in V_{(0)}, m_{i_j}>0$, we have $u_nv=0$. By Proposition \ref{spanV}, we can conclude further that for $u\in rad\langle~,~\rangle$, $v\in V_{(n)}$, 
\begin{equation}\label{induction2}
u_nv=0.\end{equation}

\vspace{0.2cm}

\noindent Next, we will show that $(rad\langle~,~\rangle)\cap V_{(0)}=\{0\}$. Let $g\in (rad\langle~,~\rangle)\cap V_{(0)}$. Then there exist $u^i\in rad\langle~,~\rangle$, $v^i\in V_{(n_i)}$, $i\in\{1,...,p\}$ such that $g=\sum_{i=1}^p u^i_{n_i}v^i$. By equation (\ref{induction2}), we can conclude immediately that $g=0$. 

\vspace{0.2cm}

\noindent By using the fact that $(rad\langle~,~\rangle)$ is a proper ideal of $V$, we can conclude immediately that $(rad\langle~,~\rangle)=\{0\}$ when $V$ is simple. 
\end{proof}

\vspace{0.2cm}

\noindent Next, we recall definitions of a Lie algebroid and its module. 
\begin{dfn} Let $A$ be a commutative associative algebra. A Lie $A$-algebroid is a Lie algebra $\mathfrak{g}$ equipped with an $A$-module structure and a module action on $A$ by derivation such that 
$$[u,av]=a[u,v]+(ua)v,\ \ a(ub)=(au)b$$
for all $u,v\in \mathfrak{g},a,b\in A$.

\vspace{0.2cm}

\noindent A module for a Lie $A$-algebroid $\mathfrak{g}$ is a vector space $W$ equipped with a $\mathfrak{g}$-module structure and an $A$-module structure such that 
$$u(aw)-a(uw)=(ua)w,~a(uw)=(au)w$$ for $a\in A$, $u\in\mathfrak{g}$, $w\in W$. 
\end{dfn}

\begin{lem}\label{algebroidmodule} The space $V_{(1)}/Ann_{V_{(1)}}(V_{(0)})$ is a Lie $V_{(0)}$-algebroid. Moreover, $V_{(0)}$ is a module for a Lie $V_{(0)}$-algebroid $V_{(1)}/Ann_{V_{(1)}}(V_{(0)})$. \end{lem}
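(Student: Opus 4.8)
The plan is to check the defining axioms of a Lie $A$-algebroid for $\mathfrak{g} := V_{(1)}/\mathrm{Ann}_{V_{(1)}}(V_{(0)})$ with $A := V_{(0)}$, and then the module axioms for $V_{(0)}$ over this algebroid, using that $V_{(1)}$ is a vertex $V_{(0)}$-algebroid (Proposition \ref{radann} and the surrounding structure). First I would observe that $\mathrm{Ann}_{V_{(1)}}(V_{(0)})$ is an ideal of the Leibniz algebra $V_{(1)}$ by Proposition \ref{radann}(ii), so the quotient bracket is well defined; moreover $\mathrm{Leib}(V_{(1)}) \subseteq \partial(V_{(0)}) \subseteq \mathrm{Ann}_{V_{(1)}}(V_{(0)})$, and since $\mathrm{Leib}(V_{(1)})$ is exactly the obstruction to the Leibniz bracket being a Lie bracket (skew-symmetry fails precisely on $\mathrm{Leib}$, because $[u,v]+[v,u]=\partial\langle u,v\rangle \in \mathrm{Leib}(V_{(1)})$ via $u_0v+v_0u=\partial(u_1v)$), the induced bracket on $\mathfrak{g}$ is genuinely a Lie bracket. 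Next, $\mathrm{Ann}_{V_{(1)}}(V_{(0)})$ is an $A$-submodule of $V_{(1)}$ by Proposition \ref{radann}(iii), so $\mathfrak{g}$ inherits a (unital, a priori nonassociative) $A$-module structure; but in fact on the quotient the nonassociativity defect $a\cdot(a'\cdot v)-(a\ast a')\cdot v = (v_0a)\cdot\partial(a') + (v_0a')\cdot\partial(a)$ lands in $\partial(A) \subseteq \mathrm{Ann}_{V_{(1)}}(V_{(0)})$, hence vanishes in $\mathfrak{g}$, so the $A$-action on $\mathfrak{g}$ is an honest associative-algebra module action.

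Then I would verify the two algebroid compatibility identities. The anchor map is $\pi : V_{(1)} \to \mathrm{Der}(V_{(0)})$, $\pi(u)(a) = u_0 a$; it kills $\mathrm{Ann}_{V_{(1)}}(V_{(0)})$ by definition, so it descends to $\bar\pi : \mathfrak{g} \to \mathrm{Der}(A)$, and it is a Leibniz- (hence Lie-, after quotient) homomorphism. The identity $[u, a\cdot v] = \pi(u)(a)\cdot v + a\cdot[u,v]$ holds already in $V_{(1)}$, and descends directly; likewise $\pi(a\cdot v) = a\,\pi(v)$ descends to give the module action on $A$ by derivations. For the module structure on $A = V_{(0)}$: the $\mathfrak{g}$-action is $\bar u \cdot a = u_0 a$, well defined since $\mathrm{Ann}_{V_{(1)}}(V_{(0)})$ acts as zero; the $A$-module structure on $A$ is multiplication $\ast$; and the two module axioms $u(aw) - a(uw) = (ua)w$ and $a(uw) = (au)w$ become $u_0(a\ast w) - a\ast(u_0 w) = (u_0 a)\ast w$ and $a\ast(u_0 w) = (a\ast u_0)\,?$—here the second is just associativity and commutativity of $\ast$ together with $\pi(a\cdot v) = a\pi(v)$, while the first is exactly the vertex-algebroid relation $u_0(a \ast a') = a \ast (u_0 a') + (u_0 a) \ast a'$ (the Leibniz/derivation property of $u_0$ acting on the commutative algebra $V_{(0)}$), which is listed among the vertex $A$-algebroid axioms.

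The main obstacle is bookkeeping rather than conceptual: one must be careful that every defect term (nonassociativity of the $A$-action, failure of skew-symmetry, failure of $\pi$ and the module actions to be well defined before passing to the quotient) lies in $\mathrm{Ann}_{V_{(1)}}(V_{(0)})$, and the cleanest route is to note repeatedly that all these defects lie in $\partial(V_{(0)})$, which by Proposition \ref{radann}(ii) sits inside $\mathrm{Ann}_{V_{(1)}}(V_{(0)})$. Once that containment is invoked, each identity reduces to one of the vertex $V_{(0)}$-algebroid axioms recorded after Proposition \ref{radann} (or its 1-truncated conformal algebra reformulation in Proposition \ref{C0C1}), read modulo $\mathrm{Ann}_{V_{(1)}}(V_{(0)})$. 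I would therefore organize the proof as: (1) quotient bracket is Lie; (2) $A$-action on $\mathfrak{g}$ descends and is associative-unital; (3) $\bar\pi$ descends and is a Lie homomorphism into $\mathrm{Der}(A)$; (4) the two algebroid compatibilities; (5) the $\mathfrak{g}$-action on $A$ descends; (6) the two module compatibilities — each step a one-line reduction to an axiom already in hand.
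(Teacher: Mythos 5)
Your proposal is correct and follows essentially the same route as the paper: every axiom is verified by reducing to a vertex $V_{(0)}$-algebroid identity and observing that the defect terms die in $\mathrm{Ann}_{V_{(1)}}(V_{(0)})$ (the paper writes the same computations explicitly in terms of modes $a_{-1}$, $u_0$, $D$). One small slip: the nonassociativity defect $(v_0a)\cdot\partial(a')+(v_0a')\cdot\partial(a)$ lies in $A\partial(A)$, not in $\partial(A)$ itself; the containment you need is $A\partial(A)\subseteq \mathrm{Ann}_{V_{(1)}}(V_{(0)})$, which is exactly the Corollary following Proposition \ref{radann}, so the argument is unaffected.
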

\begin{proof} First, we will show that $V_{(1)}/Ann_{V_{(1)}}(V_{(0)})$ is a Lie $V_{(0)}$-algebroid. Since $D(V_{(0)})\subseteq Ann_{V_{(1)}}(V_{(0)})$, we can conclude immediately that $V_{(1)}/Ann_{V_{(1)}}(V_{(0)})$ is a Lie algebra. Clearly, $V_{(1)}/Ann_{V_{(1)}}(V_{(0)})$ acts on $V_{(0)}$ as derivation. Recall that for $\alpha,t\in V_{(0)}$, we have 
$$((D(\alpha))_{-1}t)_0a'=\sum_{i\geq 0}(D(\alpha))_{-1-i}t_ia'+t_{-1-i}(D\alpha)_ia'=0$$ 
for all $a'\in V_{(0)}$. Hence, $(D(\alpha))_{-1}t\in Ann_{V_{(1)}}(V_{(0)})$ for all $\alpha,t\in V_{(0)}$. Furthermore, for $a,a'\in V_{(0)}$, $u\in V_1$, we have
\begin{eqnarray*}
&&(a_{-1}a')_{-1}u\\
&&=\sum_{i\geq 0}a_{-1-i}a'_{-1+i}u+a'_{-2-i}a_iu\\
&&=a_{-1}a'_{-1}u+a_{-2}a'_0u+a'_{-2}a_0u\\
&&=a_{-1}a'_{-1}u+(D(a))_{-1}a'_0u+(D(a'))_{-1}a_0u\\
&&\equiv a_{-1}a'_{-1}u\mod Ann_{V_{(1)}}(V_{(0)})
\end{eqnarray*}
and $V_{(1)}/Ann_{V_{(1)}}(V_{(0)})$ is a module of the commutative associative algebra $V_{(0)}$. Since
\begin{eqnarray*}
(u_0a_{-1}v)&&=a_{-1}u_0v+(u_0a)_{-1}v\\
(a_{-1}u)_{0}a'&&=\sum_{i\geq 0}a_{-1-i}u_ia'+u_{-1-i}a_ia'=a_{-1}u_0a
\end{eqnarray*}
for all $a,a'\in V_{(0)}$, $u,v\in V_{(1)}$, we can conclude immediately that $V_{(1)}/Ann_{V_{(1)}}(V_{(0)})$ is a Lie $V_{(0)}$-algebroid.

\vspace{0.2cm}

\noindent Next, we will show that $V_{(0)}$ is a module for a Lie $V_{(0)}$-algebroid $V_{(1)}/Ann_{V_{(1)}}(V_{(0)})$. Clearly, $V_{(0)}$ is a module of Leibniz algebra $V_{(1)}$. By commutativity and associativity, we have 
\begin{eqnarray*}
u_0a_{-1}a'&&=a_{-1}u_0a'+(u_0a)_{-1}a'\\
(a_{-1}u)_0a'&&=a_{-1}u_0a'.
\end{eqnarray*}
for $a,a'\in V_{(0)}$, $u\in V_{(1)}$. Hence, $V_{(0)}$ is a module for a Lie $V_{(0)}$-algebroid $V_{(1)}/Ann_{V_{(1)}}(V_{(0)})$. 
\end{proof}

\begin{rmk} $V_{(1)}/Ann_{V_{(1)}}(V_{(0)})$ is a vertex $V_{(0)}$-algebroid.
\end{rmk}

\vspace{0.2cm}

\noindent Next Theorem is another key ingredient for proving Theorem \ref{Main Theorem nonsimple}.
\begin{thm}\label{Vsimple} If $V$ is simple then $V_{(0)}$ is a simple module for a Lie $V_{(0)}$-algebroid $V_{(1)}/Ann_{V_{(1)}}(V_{(0)})$. As a result, $V$ is non-simple when $V_{(0)}$ is not a simple module for a Lie $V_{(0)}$-algebroid $V_{(1)}/Ann_{V_{(1)}}(V_{(0)})$. 
\end{thm}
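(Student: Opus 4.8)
The plan is to prove the contrapositive: assuming $V$ is a simple vertex algebra, I will show that $V_{(0)}$ is an irreducible module for the Lie $V_{(0)}$-algebroid $\mathfrak{g} := V_{(1)}/Ann_{V_{(1)}}(V_{(0)})$. So suppose $W \subseteq V_{(0)}$ is a nonzero submodule for this algebroid; concretely, $W$ is stable under $a_{-1}(-)$ for all $a \in V_{(0)}$ (the $V_{(0)}$-module action) and under $u_0(-)$ for all $u \in V_{(1)}$ (the $\mathfrak{g}$-action, which is well-defined since $Ann_{V_{(1)}}(V_{(0)})$ acts as zero on $V_{(0)}$). The goal is to show $W = V_{(0)}$. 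The natural device is to pass to the ideal $(W)$ of $V$ generated by $W$ and argue that $(W) \cap V_{(0)} = W$; since $V$ is simple and $W \neq \{0\}$, this forces $(W) = V$, hence $W = (W) \cap V_{(0)} = V_{(0)}$.

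The heart of the argument is therefore the identity $(W) \cap V_{(0)} = W$, which I would establish with the same length-induction machinery used in the proof of Theorem \ref{rad}. By Corollary 4.5.10 of \cite{LLi}, $(W) = \mathrm{Span}\{v_n D^i(w) \mid v \in V, n \in \mathbb{Z}, i \geq 0, w \in W\}$, and since $W \subseteq V_{(0)}$ we have $D(w) = w_{-2}\mathbf{1}$, so in fact one expects $(W) = \mathrm{Span}\{v_n w \mid v\in V, n \in \mathbb{Z}, w \in W\}$ after checking this span is already an ideal (the check runs exactly as in the Lemma preceding Theorem \ref{rad}, using that $W$ is closed under $u_0(-)$ and under $a_{-1}(-)$). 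Then, using Proposition \ref{spanV}(2), every element of $V$ in a fixed graded piece $V_{(n)}$ is a sum of terms $s^1_{-m_1}\cdots s^t_{-m_t} q$ with $s^i \in V_{(1)}$, $q \in V_{(0)}$, $m_i \geq 1$; I induct on the length $t$ to compute $(s^1_{-m_1}\cdots s^t_{-m_t} q)_0 w$ for $w \in W$. Moving $w$'s operator past the $s^i_{-m_i}$ using the commutator formula (\ref{relbb}) produces $s^1_{-m_1}(\cdots)_0 w$ plus correction terms involving $(s^1_0 \text{-images})$; but the degree-$0$ component $v_0 w$ of any $v \in V_{(n)}$ lands in $V_{(0)}$, and I want to show it lies in $W$. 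The key point is that $u_0 w \in W$ for $u \in V_{(1)}$, and $a_{-1}w \in W$ for $a \in V_{(0)}$, so by an induction on $t$ one shows that for $v = s^1_{-m_1}\cdots s^t_{-m_t} q \in V_{(n)}$ one has $v_0 w \in W$; combined with the skew-symmetry/iterate formulas this gives $v_n' w' \in W$ whenever $v' \in V$, $w' \in W$ and the result lies in $V_{(0)}$, i.e. $(W)\cap V_{(0)} \subseteq W$. The reverse inclusion is trivial since $W \subseteq (W)$.

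I expect the main obstacle to be the bookkeeping in the induction: unlike in Theorem \ref{rad}, where $u \in rad\langle~,~\rangle$ enjoyed the strong vanishing $u_1(-) = 0$ and $u_0 a = 0$ on $V_{(0)}$ (so all correction terms died), here the roles are reversed — it is $W$ that has good closure properties, not the operator being moved — so when I commute a $V_{(1)}$-operator past $w$ I will pick up terms of the form $(s_0 \cdot \text{stuff})$ and $(s_1 \cdot \text{stuff})$ acting on $w$, and I must check each such term either lands outside $V_{(0)}$ (and hence is irrelevant for the intersection computation) or is again of the controlled form $u_0 w$ or $a_{-1}w$ with $u \in V_{(1)}$, $a \in V_{(0)}$. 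Handling the lowest-length cases $t=0,1$ directly and setting up the inductive step so that the "extra" term $(s^1_0 \cdot -)$ is recognized as an element of $V_{(1)}$ acting on something already known to lie in $W$ is where care is needed; once the induction is correctly framed, the simplicity of $V$ closes the argument immediately.
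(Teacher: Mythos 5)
Your proposal follows essentially the same route as the paper: generate the ideal $(W)$, show $(W)\cap V_{(0)}=W$ by a length induction (via Proposition \ref{spanV} and the commutator formula, with the submodule closure properties $u_0w\in W$ and $a_{-1}w\in W$ absorbing all correction terms), and then invoke simplicity. The only cosmetic differences are that the paper computes $w_{l-1}v$ rather than commuting inside $v_{\bullet}w$ (note that your ``$v_0w$'' should be the mode $v_{n-1}w$, the one landing in $V_{(0)}$ for $v\in V_{(n)}$), and it realizes $(W)$ as $\mathrm{Span}\{w_nv\mid w\in W,\ v\in V\}$ with the $W$-elements as operators, which makes the $D$-stability of that span immediate.
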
 
\begin{proof}
Let $W$ be a submodule of $V_{(0)}$. Let $(W)$ be an ideal of $V$ generated by $W$. We set $$U=Span\{u_nv~|~u\in W,~v\in V,~n\in\mathbb{Z}\}.$$ Clearly, $W\subseteq U$ and $D(U)\subseteq U$. Since $W\subseteq (W)$ and $(W)$ is an ideal of $V$, we can conclude that $U\subseteq (W)$. Now, we will show that $U=(W)$. Let $a\in V_{(0)}$, $b\in V_{(1)}$, $v\in V$, $u\in W$, $m,n\in\mathbb{Z}$. 
Since $W$ is a submodule of $V_{(0)}$, $a_mu_nv=u_na_mv\in U$ and $b_mu_nv=u_nb_mv+(b_0u)_{m+n}v\in U$, and $V$ is generated by $V_{(0)}$ and $V_{(1)}$, we can conclude that 
$s_tu_nv\in U$ for all $s,v\in V$, $u\in W$, $t,n\in\mathbb{Z}$. Hence, $U$ is an ideal of $V$ that contains $W$. In fact, $U=(W)$.

\vspace{0.2cm}

\noindent Recall that if $v$ is of the form $s^1_{-m_{1}}....s^t_{-m_{t}}q$ where $s^{i}\in V_{(1)}, q^i\in V_{(0)}, m_{i_j}>0$, then we say that $v$ has {\em length} $t$. We will use an induction on this type of length to show that for $u\in W$, and for $v\in V$ such that $v=s^1_{-m_{1}}....s^t_{-m_{t}}q\in V_{(n)}$ where $s^{i}\in V_{(1)}, q^i\in V_{(0)}, m_{i_j}>0$, we have 
\begin{equation}\label{induction3}
u_{n-1}v\in W\end{equation}
When $t=0$, we have $u_{-1}q=q_{-1}u\in W$. When $t=1$, 
$$u_{m_1-1}s^1_{-m_1}q=(s^1_{-m_1}u_{m_1-1}+(u_0s^1)_{-1})q=-(s^1_0u)_{-1}q\in W.$$
Hence, equation (\ref{induction3}) holds when $t=0$, and $t=1$. Now, we assume that equation (\ref{induction3}) holds for all $\alpha\in W$, and for all $\beta\in V$ that has length $k\leq p$. Let $w\in W$ and $v=s^1_{-m_{1}}....s^{p+1}_{-m_{{p+1}}}\tau\in V_{(l)}$ where $s^{i}\in V_{(1)}, \tau\in V_{(0)}, m_{i}>0$. By induction hypothesis, we have
\begin{eqnarray*}
w_{l-1}v&&=w_{l-1}(s^1_{-m_{1}}....s^{p+1}_{-m_{{p+1}}}\tau)\\
&&=s^1_{-m_1}w_{l-1}s^2_{-m_2}....s^{p+1}_{-m_{{p+1}}}\tau+(w_0s^1)_{l-m_1-1}s^2_{-m_2}....s^{t+1}_{-m_{{p+1}}}\tau\\
&&=(w_0s^1)_{l-m_1-1}s^2_{-m_{2}}....s^{p+1}_{-m_{p+1}}\tau\in W.
\end{eqnarray*}
By Proposition \ref{spanV}, we can conclude further that for $w\in W$, $v\in V_{(l)}$, 
\begin{equation}\label{induction3}
w_{l-1}v\in W.\end{equation}
Hence $(W)\cap V_{(0)}=W$. If $V$ is simple then $V_{(0)}$ is an irreducible module for a Lie $V_{(0)}$-algebroid $V_{(1)}/Ann_{V_{(1)}}(V_{(0)})$. 
\end{proof}

\begin{rmk} We set $V_{(0)}D(V_{(0)})=Span\{a_{-1}D(a')~|~a,a'\in V_{(0)}\}$. By associativity and the fact that $\alpha_0b=-b_0\alpha$ for $\alpha\in V_{(0)}$, $b\in V_{(1)}$, we have 
\begin{eqnarray*}
&&(\alpha_{-1}\alpha')_{-1}a_{-1}Da'=\alpha_{-1}\alpha'_{-1}a_{-1}Da',\\
&&(a_{-1}D(a'))_0V_{(0)}=0\text{ and }a_{-1}D(a')=D(a')_{-1}a
\end{eqnarray*} for all $\alpha,\alpha', a,a'\in V_{(0)}$. By following the proof in Lemma \ref{algebroidmodule} and Theorem \ref{Vsimple}, one can show that 

\vspace{0.2cm}

\noindent (i) $V_{(1)}/{V_{(0)}}D(V_{(0)})$ is a Lie $V_{(0)}$-algebroid. Moreover, $V_{(0)}$ is a module for a Lie $V_{(0)}$-algebroid $V_{(1)}/{V_{(0)}}D(V_{(0)})$. 

\vspace{0.2cm}

\noindent (ii) If $V$ is simple then $V_{(0)}$ is a simple module for a Lie $V_{(0)}$-algebroid $V_{(1)}/{V_{(0)}}D(V_{(0)})$. 

\vspace{0.2cm}

\noindent Note that these statements first appeared in \cite{LiY} for the case when $V$ is a vertex algebra associated with a vertex algebroid. 
\end{rmk}

\vspace{0.2cm}

\noindent We summarize the above discussion in the following Proposition.

\begin{prop}\label{nonsimpleV2} If $V$ is simple then $V_{(0)}$ is a simple module for a Lie $V_{(0)}$-algebroid $V_{(1)}/{V_{(0)}}D(V_{(0)})$. Hence, $V$ is non-simple when $V_{(0)}$ is not a simple module for a Lie $V_{(0)}$-algebroid $V_{(1)}/{V_{(0)}}D(V_{(0)})$.  
\end{prop}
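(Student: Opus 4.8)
The plan is to deduce everything from the Remark immediately preceding the Proposition, reorganizing it into three short steps. First I would record that $V_{(0)}D(V_{(0)})$ is an ideal of the vertex $V_{(0)}$-algebroid $V_{(1)}$ that is contained in $Ann_{V_{(1)}}(V_{(0)})$: the inclusion into $Ann_{V_{(1)}}(V_{(0)})$ is exactly the identity $(a_{-1}D(a'))_0 V_{(0)}=0$ from the Remark, while stability under $b_n$ for $b\in V_{(1)}$ and under $\alpha_{-1}$ for $\alpha\in V_{(0)}$ is obtained from the commutator and iterate formulas precisely as for $A\partial(A)$ in Section 3. Since $Leib(V_{(1)})=\partial(\mathrm{Span}\{\langle u,v\rangle\})\subseteq\partial(V_{(0)})={\bf 1}_{-1}D(V_{(0)})\subseteq V_{(0)}D(V_{(0)})$, the quotient $V_{(1)}/V_{(0)}D(V_{(0)})$ is a Lie algebra.

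Next I would equip $V_{(1)}/V_{(0)}D(V_{(0)})$ with the induced $V_{(0)}$-module structure $a\cdot\bar u=\overline{a_{-1}u}$ and the anchor $\bar u\mapsto(a\mapsto u_0 a)$, and verify the Lie $V_{(0)}$-algebroid axioms, and the module axioms for $V_{(0)}$, by repeating the computations of Lemma \ref{algebroidmodule} verbatim. The key point, and the one step needing real care, is that every correction term occurring there, e.g.\ $(D(a))_{-1}(a'_0 u)$ and $(D(a))_{-1}(u_0 a')$, must be shown to lie in the smaller ideal $V_{(0)}D(V_{(0)})$ rather than merely in $Ann_{V_{(1)}}(V_{(0)})$; this is exactly what the three identities isolated in the Remark, namely $(\alpha_{-1}\alpha')_{-1}a_{-1}D(a')=\alpha_{-1}\alpha'_{-1}a_{-1}D(a')$, $(a_{-1}D(a'))_0 V_{(0)}=0$, and $a_{-1}D(a')=D(a')_{-1}a$, are designed to supply (using skew symmetry to rewrite $u_{-1}c$ as $c_{-1}u+D(c_0 u)$ for $c\in V_{(0)}$, $u\in V_{(1)}$, so that such a term becomes a sum of things in $V_{(0)}\cdot D(V_{(0)})$ together with $D$ of an element of $V_{(0)}$). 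Since $V_{(0)}D(V_{(0)})$ annihilates $V_{(0)}$ the $V_{(1)}$-action on $V_{(0)}$ descends, and the relations $u_0(a_{-1}a')=a_{-1}(u_0 a')+(u_0 a)_{-1}a'$ and $(a_{-1}u)_0 a'=a_{-1}(u_0 a')$ give the two module compatibility axioms.

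Finally I would prove the simplicity statement by reducing to Theorem \ref{Vsimple}. Because $V_{(0)}D(V_{(0)})\subseteq Ann_{V_{(1)}}(V_{(0)})$, a subspace $W\subseteq V_{(0)}$ is a submodule for the Lie $V_{(0)}$-algebroid $V_{(1)}/V_{(0)}D(V_{(0)})$ if and only if it is a $V_{(0)}$-submodule stable under the action of every $u\in V_{(1)}$, which is exactly the condition for $W$ to be a submodule for $V_{(1)}/Ann_{V_{(1)}}(V_{(0)})$; hence the two submodule lattices of $V_{(0)}$ coincide. The argument of Theorem \ref{Vsimple} then applies unchanged: the ideal $(W)$ of $V$ generated by $W$ equals $\mathrm{Span}\{u_n v\mid u\in W,\ v\in V,\ n\in\mathbb{Z}\}$, and the length induction there gives $w_{l-1}v\in W$ for all $w\in W$ and $v\in V_{(l)}$, so $(W)\cap V_{(0)}=W$. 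If $V$ is simple then $(W)\in\{0,V\}$, hence $W\in\{0,V_{(0)}\}$, i.e.\ $V_{(0)}$ is a simple module for $V_{(1)}/V_{(0)}D(V_{(0)})$; the ``Hence'' clause of the Proposition is just the contrapositive. The main obstacle, as noted, is the second step: keeping track of the precise ideal in which the correction terms of the vertex-algebroid identities land. Once that bookkeeping is in place, the rest is a transcription of Lemma \ref{algebroidmodule} and Theorem \ref{Vsimple}, and no new idea is required.
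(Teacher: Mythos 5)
Your proposal is correct and matches the paper's own (sketched) argument, which likewise reduces everything to the identities $(\alpha_{-1}\alpha')_{-1}a_{-1}D(a')=\alpha_{-1}\alpha'_{-1}a_{-1}D(a')$, $(a_{-1}D(a'))_0V_{(0)}=0$, $a_{-1}D(a')=D(a')_{-1}a$ and then instructs the reader to rerun Lemma \ref{algebroidmodule} and Theorem \ref{Vsimple} with $V_{(0)}D(V_{(0)})$ in place of $Ann_{V_{(1)}}(V_{(0)})$. Your one small improvement is the observation that, since $V_{(0)}D(V_{(0)})\subseteq Ann_{V_{(1)}}(V_{(0)})$ and both ideals act trivially on $V_{(0)}$, the two submodule lattices of $V_{(0)}$ coincide, so the simplicity claim follows from Theorem \ref{Vsimple} without repeating its length induction.
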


\vspace{0.2cm}

\noindent We easily obtain Theorem \ref{Main Theorem nonsimple} by using Proposition \ref{Vindecomposable}, Theorem \ref{rad}, Theorem \ref{Vsimple} and Proposition \ref{nonsimpleV2}:

\vspace{0.1cm}

\noindent {\bf Theorem \ref{Main Theorem nonsimple} } Let $V=\oplus_{n=0}^{\infty}V_{(n)}$ be a $\mathbb{N}$-graded vertex algebra that satisfies the following properties:

\vspace{0.2cm}

\noindent (a) $2\leq \dim V_{(0)}<\infty$, $1\leq dim V_{(1)}<\infty$, $V$ is generated by $V_{(0)}$ and $V_{(1)}$;

\vspace{0.2cm}

\noindent (b) $V_{(0)}$ is a local algebra.

\vspace{0.2cm}
\noindent Assume that one of the following statements hold.

\vspace{0.2cm} 

\noindent (i) An ideal generated by $rad\langle~,~\rangle$ is not zero;

\vspace{0.2cm}

\noindent (ii) $V_{(0)}$ is not a simple module for a Lie $V_{(0)}$-algebroid $V_{(1)}/Ann_{V_{(1)}}(V_{(0)})$;

\vspace{0.2cm}

\noindent (iii) $V_{(0)}$ is not a simple module for a Lie $V_{(0)}$-algebroid $V_{(1)}/{V_{(0)}}D(V_{(0)})$. 

\vspace{0.2cm}

\noindent Then $V$ is an indecomposable non-simple vertex algebra. 

\vspace{0.2cm}

\noindent Now, Theorem \ref{MainTheorem} follows immediately from Theorem \ref{Main Theorem nonsimple}, Theorem \ref{Bsimple}, Remark \ref{BsimpleVnotsimple}, Theorem \ref{Bsemisimple}, and Corollary \ref{BsemisimpleVnotsimple}:

\vspace{0.2cm}

\noindent {\bf Theorem \ref{MainTheorem}.} Let $V=\oplus_{n=0}^{\infty}V_{(n)}$ be a $\mathbb{N}$-graded vertex algebra that satisfies the following properties

\vspace{0.2cm}

\noindent (a) $2\leq \dim V_{(0)}<\infty$. $1\leq dim V_{(1)}<\infty$, $V$ is generated by $V_{(0)}$ and $V_{(1)}$;

\vspace{0.2cm}

\noindent (b) $V_{(0)}$ is not a trivial module of a Leibniz algebra $V_{(1)}$, $u_{0}u\neq 0$ for some $u\in V_{(1)}$;

\vspace{0.2cm}

\noindent (c) the Levi Factor of $V_{(1)}$ equals $Span\{e,f,h\}$, $e_0f=h$, $h_0e=2e$, $h_0f=-2f$ and $e_1f=k{\bf 1}$. Here, $k\in\mathbb{C}\backslash \{0\}$.

\vspace{0.2cm}

\noindent Assume that one of the following statements hold.

\vspace{0.2cm} 

\noindent (i) $V_{(1)}$ is a simple Leibniz algebra;

\vspace{0.2cm}

\noindent (ii) $V_{(1)}$ is a semisimple Leibniz algebra and $Ker(D)\cap V_{(0)}=\{a\in V_{(0)}~|~b_0a=0\text{ for all } b\in V_{(1)}\}$. Here, $D$ is a linear operator on $V$ such that $D(v)=v_{-2}{\bf 1}$ for $v\in V$.
\vspace{0.2cm}

\noindent Then $V$ is an indecomposable non-simple vertex algebra.


\end{document}